\setlist[enumerate]{nosep}
\newcommand\redsout{\bgroup\markoverwith{\textcolor{red}{\rule[0.5ex]{2pt}{0.8pt}}}\ULon}
\newtheorem{theorem}{Theorem}[section]
\newtheorem*{conjecture*}{Conjecture}
\newtheorem*{theorem*}{Theorem}
\newtheorem{lemma}[theorem]{Lemma}
\newtheorem*{lemma*}{Lemma}
\newtheorem{corollary}[theorem]{Corollary}
\newtheorem*{corollary*}{Corollary}
\newtheorem{proposition}[theorem]{Proposition}
\newtheorem{remark}[theorem]{Remark}
\newtheorem{definition}[theorem]{Definition}
\newtheorem*{definition*}{Definition}
\newtheorem*{definitions*}{Definitions}
\newtheorem*{question*}{Question}
\newtheorem*{questions*}{Questions}
\newtheorem{thm}{Theorem}[section]
\newtheorem{cor}{Corollary}[section]
\newcommand{\cG}{\mathcal G}
\newcommand{\cH}{\mathcal H}
\newcommand{\cR}{\mathcal R}
\newcommand{\cZ}{\mathcal Z}
\def\Cz{\mathbb{C}}
\def\Nz{\mathbb{N}}
\def\Pz{\mathbb{P}}
\def\Rz{\mathbb{R}}
\def\Zz{\mathbb{Z}}
\def\1z{\mathbb{1}}
\newcommand{\fS}{\mathfrak S}
\newcommand{\sgn}{\textup{sgn}}
\def\SEMI{\mbox{$\times\kern-2pt\vrule height5pt width.6pt \kern3pt $}}
\newcommand{\Hom}{{\rm Hom}\,}
\newcommand{\Aut}{{\rm Aut}\,}
\newcommand{\Coker}{{\rm Coker}\,}
\newcommand{\id}{{\rm id}}
\newcommand{\res}{{\rm res}}
\newcommand{\mfr}{\mathfrak{r}}
\newcommand{\mfs}{\mathfrak{s}}
\newcommand{\G}{\mathcal{G}}
\newcommand{\m}{\mathfrak{m}}
\renewcommand{\r}{\mathfrak{r}}
\newcommand{\s}{\mathfrak{s}}
\newcommand{\tail}{\textup{tail}}
\newcommand{\prim}{\textup{prim}}
\newcommand{\acts}{\curvearrowright} 
\renewcommand{\Im}{\textup{Im}}
\newcommand{\eps}{\varepsilon}
\newcommand{\ol}{\overline}
\newcommand{\Ker}{\textup{Ker}}
\newcommand{\Ad}{\textup{Ad}}
\newcommand{\Curr}{\textup{Curr}}
\newcommand{\COE}{\textup{COE}}
\newcommand{\D}{\bm{D}}
\newcommand{\F}{\bm{F}}
\renewcommand{\H}{\textup{H}}
\newcommand{\E}{\textup{E}}
\newcommand{\Homeo}{\textup{Homeo}}
\renewcommand{\Aut}{\textup{Aut}}
\newcommand{\Out}{\textup{Out}}
\newcommand{\Inn}{\textup{Inn}}
\newcommand{\Stab}{\textup{Stab}}
\newcommand{\cycle}{{\rm cycle}}
\begin{document}

\title[Boundary actions of outer automorphism groups of Thompson-like groups]{Boundary actions of outer automorphism groups of Thompson-like groups}

\thispagestyle{fancy}

\author{Chris Bruce}
\address[Chris Bruce]{School of Mathematics, Statistics and Physics, Herschel Building, Newcastle University, Newcastle upon Tyne, NE1 7RU, United Kingdom}
\email[Bruce]{chris.bruce@newcastle.ac.uk}

\author{Xin Li}
\address[Xin Li]{School of Mathematics and Statistics, University of Glasgow, University Place, Glasgow G12 8QQ, United Kingdom}
\email[Li]{Xin.Li@glasgow.ac.uk}

\author{Takuya Takeishi}
\address[Takuya Takeishi]{Faculty of Arts and Sciences, Kyoto Institute of Technology, Matsugasaki, Sakyo-ku, Kyoto, Japan}
\email[Takeishi]{takeishi@kit.ac.jp}

\date{\today}

\begin{abstract}
    For every Cuntz--Krieger groupoid, we show that there is a topologically free boundary action of the outer automorphism group of its topological full group on the Hilbert cube. In particular, these outer automorphism groups, including the outer automorphism groups of all Higman--Thompson groups, are C*-simple. 
\end{abstract}

\maketitle

\setlength{\parindent}{0cm} \setlength{\parskip}{0.5cm}

\section{Introduction}

\subsection*{Higman--Thompson groups}
In 1965, Thompson introduced three groups $F<T<V$. The groups $T$ and $V$ were the first examples of infinite finitely presented simple groups, see \cite{CFP96}. Higman generalised Thompson's construction of $V$ to define the first infinite family of finitely presented infinite simple groups \cite{Higman74}. Namely, Higman defined a family of groups $\{V_{n,r}\}$ such that $V_{2,1}=V$ and the group $V_{n,r}$ is finitely presented, simple when $n$ is even, and has a simple finitely presented commutator subgroup of index $2$ when $n$ is odd. These groups are called the \emph{Higman--Thompson groups}, and they have been studied intensely in the 50-year period since their conception. 

Thirty years ago, Brin initiated the study of automorphism groups of Thompson's groups, where he described the groups $\Out(F)$ and $\Out(T)$ \cite{Brin96}. He left open the natural problem of analysing $\Out(V)$ or $\Out(V_{n,r})$. It was only recently proven by Bleak, Cameron, Maissel, Navas, and Olukoya that $\Out(V_{n,r})$ is an infinite group, using a description of its elements as transducers \cite{BCMNO24}. Using the framework from \cite{BCMNO24}, it was then proven that $\Out(V_{n,r})$ contains a copy of the automorphism groups of the full one-sided $n$-shift and copies of the full two-sided $m$-shifts for $m\geq 2$ \cite{BBCO21,BCO21,Olu24} and that $\Out(V_{n,1})$ contains a copy of Thompson's group $F$ for $n\geq 3$ \cite{Olu22}. Since automorphism groups of the two-sided full shifts contain non-abelian free subgroups and all finite groups, this implies, e.g., that $\Out(V_{n,r})$ contains all finite groups and has non-abelian free subgroups. In particular, $\Out(V_{n,r})$ is non-amenable. It was proven in \cite{BBCO21} that $\Out(V_{n,r})$ has a trivial centre and unsolvable order problem. Aside from these results, little is known about the structure of $\Out(V_{n,r})$, despite the long-standing interest in the Higman--Thompson groups.

\subsection*{Topological full groups}
\emph{Topological full groups} attached to Cantor minimal systems were introduced by Giordano, Putnam, and Skau \cite{GPS99}. This construction has led to several breakthroughs in group theory. Matui proved that the topological full groups from Cantor minimal systems have simple commutator subgroups \cite{Mat06}, and Juschenko and Monod proved that they are amenable, leading to the first examples of infinite finitely generated simple amenable groups \cite{JM13}. Matte Bon used topological full groups for Cantor minimal systems to construct the first examples of infinite, finitely generated, simple groups with the Liouville property \cite{MBon14}.
Matui then generalised the construction of topological full groups to \'{e}tale groupoids over the Cantor set: each such groupoid gives rise to a discrete group of homeomorphisms of the Cantor set that consists of the homeomorphisms that locally look like the action of the groupoid on its unit space \cite{Mat12}. Topological full groups play a crucial role in Nekrashevych's construction of the first examples of simple groups of intermediate growth \cite{Nek18}, and the Nekrashevych-R\"{o}ver groups from \cite{Nek04}---which are topological full groups associated with the groupoids from self-similar group actions \cite{Nek09}---were proven by Skipper, Witzel, and Zaremsky to be the first examples of simple groups that are of type $F_n$ but not of type $F_{n-1}$ \cite{SWZ19}.

Some of the most important \'{e}tale groupoids are the \emph{Cuntz--Krieger groupoids}, which arise as groupoid models for the Cuntz--Krieger C*-algebras attached to non-trivial irreducible shifts of finite type \cite{Cun81,CK80}. 
Every shift of finite type can be modelled as the one-sided edge shift $(X_E,\sigma_E)$ of a finite directed graph $E$ (see \cite{LM21}), and the Cuntz--Krieger groupoid $\cG_E$ is the groupoid associated with the shift of finite type $(X_E,\sigma_E)$ (see \cite[Chapter~5]{MatBook} for details). 
Nekrashevych showed that the Higman--Thompson group $V_{n,1}$ (e.g., Thompsons group $V=V_{2,1}$) could be realsied as the topological full group of the Cuntz--Krieger groupoid for the full $n$-shift, and Matui extended this to show that every Higman--Thompson group appears as the topological full group of a Cuntz--Krieger groupoids (see \cite{Nek04} and \cite[Remark 6.3 and Section 6.7.1]{Mat15}). Matui then initiated a systematic study of the topological full groups of Cuntz--Krieger groupoids \cite{Mat15}, where he proved that these groups share many of the important properties satisfied by the Higman--Thompson groups. In fact, Matui proved that for every Cuntz--Krieger groupoid $\cG_E$, its topological full group $\F(\cG_E)$ is of type $F_\infty$ (in particular, it is finitely presented) and that the commutator subgroup $\D(\cG_E)$ of $\F(\cG_E)$ is simple and finitely generated. Matui further showed that $\F(\cG_E)$ is not amenable, but has the Haagerup property, and he gave an explicit description of the abelianisation of $\F(\cG_E)$ in terms of the adjacency matrix of the graph $E$. In light of these results, the topological full groups $\F(\cG_E)$ attached to Cuntz--Kriger groupoids are natural generalisations of the Higman--Thompson groups, and we shall refer to them as \emph{Thompson-like groups}.

In many cases, the topological full group of an \'{e}tale groupoid completely remembers the groupoid \cite{Mat15,Rubin89}, so that one can go back and forth between the groupoid and the topological full group. This allows one to use powerful groupoid techniques and ideas from topological dynamics and algebraic topology to study groups in new ways. For instance, the second-named author in \cite{Li25} recently established a deep connection between the group homology of topological full groups and groupoid homology. This led to a resolution of Matui's AH conjecture from \cite{Mat16}, to novel homology calculations for groups, and to a new proof that Thompson's group $V$ is integrally acyclic, recovering the main result from \cite{SW19}. Further, the topological full groups attached to Cuntz--Krieger groupoids were used in the recent proof of the Boone--Higman conjecture for hyperbolic groups \cite{BBMZ25}. In this paper, we use groupoid techniques to construct natural boundary actions of the outer automorphism groups $\Out(\F(\cG_E))$.

\subsection*{C*-simplicity}
A group is said to be \emph{C*-simple} if its reduced C*-algebra is a simple C*-algebra. C*-simplicity of discrete groups has been of great interest to the group theory and operator algebra communities since the work of Powers, who proved that non-abelian free groups are C*-simple \cite{Powers75}. See \cite{delaHarpe07} for a survey of the earlier works on C*-simplicity.
In a breakthrough, Kalantar and Kennedy proved that a discrete group $G$ is C*-simple if and only if it acts (topologically) freely on its Furstenberg boundary if and only if it admits a topologically free boundary action \cite{KK17}. Breuillard, Kalantar, Kennedy, and Ozawa then gave a new proof of this result \cite{BKKO17}, leading to a representation-theoretic characterisation of C*-simplicity: a discrete group $G$ is C*-simple if and only if for every amenable subgroup of $G$, the associated quasi-regular representation is weakly equivalent to the regular representation. They also established C*-simplicity for several large classes of groups and proved that the reduced C*-algebra of $G$ has a unique trace if and only if the amenable radical of $G$ vanishes. In particular, this showed that C*-simplicity implies the unique trace property. For many classes of groups, vanishing of the amenable radical is equivalent to C*-simplicity, see, e.g., \cite[Theorems 1.6 \& 1.7]{BKKO17}. However, Le Boudec proved that vanishing of the amenable radical is not in general sufficient for C*-simplicity \cite{Boudec17}.

C*-simplicity has several other striking consequences and equivalences, in addition to the ones mentioned above. Kennedy and Haagerup \cite{Haag15,Ken20} independently proved that C*-simplicity is equivalent to the \emph{Powers' averaging property} from \cite{Powers75}, and Kennedy further showed that a discrete group $G$ is C*-simple if and only if it has no non-trivial amenable uniformly recurrent subgroups (in the sense of Glasner and Weiss \cite{GW15}). This reinforces de la Harpe's remark in \cite{delaHarpe07} that C*-simplicity is viewed as "an extreme case of non-amenability". Hartman and Kalantar showed that C*-simplicity of $G$ is equivalent to the action of $G$ on the space of amenable subgroups of $G$ being uniquely $\mu$-stationary for some probability measure $\mu$ on $G$ \cite[Corollary~5.7]{HK23}.

Kennedy's characterisation of C*-simplicity, combined with the main result in \cite{LBMB18}, gives a powerful tool for proving that certain classes of groups are C*-simple: if $G$ admits a micro-supported action on a compact Hausdorff space for which all the rigid stabiliser subgroups are non-amenable, then $G$ must be C*-simple. Note that such actions are necessarily very far from being topologically free. In fact, a typical example is the action of a topological full group of a purely infinite groupoid on the unit space of the groupoid, and the results from \cite{LBMB18} lead to C*-simplicity for many topological full groups, see \cite{BS19}. Aside from topological full groups, the largest classes of C*-simple groups are the linear groups or the acylindrically hyperbolic groups with trivial amenable radicals, see \cite[Theorem~1.6]{BKKO17} and \cite[Theorem~8.14]{DGO17}, respectively. There are C*-simple groups that do not fit into these classes, see, e.g., \cite{KT-D23, OO14}, and the outer automorphisms of Thompson-like groups also do not fit into one of the usual classes of C*-simple groups.

In general, it can be very difficult to determine whether a given group or class of groups is C*-simple, and even when it is known that a given group is C*-simple, it can be hard to find an explicit topologically free boundary action on a second countable space. These difficulties are witnessed in particular for Thompson's groups: C*-simplicity of Thompson's group $T$ is equivalent to non-amenability of $F$ \cite{HO17,LBMB18}, which is a long-standing open problem. It is proven in \cite[Theorem~4.5]{LBMB18} that Thompson's group $V$ is C*-simple, but the proof uses the characterisation of C*-simplicity from \cite[Theorem~1.2]{Ken20}, which does not provide any explicit boundary action. 
C*-simplicity of the Thompson-like groups $\F(\cG_E)$ is also known only from the general results in \cite{LBMB18} (see \cite[Corollary~5.3]{BS19}), and consequently, no explicit topologically free boundary action of $\F(\cG_E)$ on a second countable space is known. Surprisingly, the situation for the outer automorphism groups $\Out(\F(\cG_E))$ is quite different.

\subsection*{The main result}

In this article, we prove the following theorem.
\begin{thm}
\label{thm:main} 
    For every Cuntz--Krieger groupoid $\cG_E$, the outer automorphism group 
    $\Out(\F(\cG_E))$ admits a topologically free boundary action on the Hilbert cube.
    In particular, $\Out(\F(\cG_E))$ is C*-simple.
\end{thm}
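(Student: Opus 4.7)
The plan is to use reconstruction theorems to rephrase $\Out(\F(\cG_E))$ dynamically, and then to construct an explicit Hilbert-cube boundary. First, I would invoke Rubin's and Matui's reconstruction results to identify $\Aut(\F(\cG_E))$ with the topological groupoid automorphisms of $\cG_E$, with inner automorphisms of $\F(\cG_E)$ corresponding exactly to those groupoid automorphisms induced by bisections. This yields an isomorphism
\[
\Out(\F(\cG_E)) \;\cong\; \Aut(\cG_E)/\Inn(\cG_E),
\]
where $\Inn(\cG_E)$ is the subgroup coming from bisections. It converts the problem into constructing a natural boundary action of a quotient of $\Aut(\cG_E)$ using the dynamics of the shift of finite type $(X_E,\sigma_E)$.

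Next I would construct a compact Polish space $Y$ on which $\Aut(\cG_E)$ acts, such that $\Inn(\cG_E)$ acts trivially, so the action descends to $\Out(\F(\cG_E))$. The space $Y$ should be homeomorphic to the Hilbert cube; natural candidates come from measure-theoretic or combinatorial boundary data of the shift, for example a suitable quotient of the space of probability measures on $X_E$ or the Poulsen simplex of $\sigma_E$-invariant measures, together with the classical Keller or Lindenstrauss--Olsen--Sternfeld theorems identifying such spaces with the Hilbert cube. I would then verify the two conditions of a boundary action: minimality should follow from the known richness of $\Out(\F(\cG_E))$, which contains copies of shift automorphism groups, non-abelian free groups, and all finite groups, providing enough mixing to make every orbit dense; and strong proximality should follow by exhibiting suitably \emph{contracting} elements, for instance via transducer-type representatives in the spirit of \cite{BCMNO24}, that push arbitrary probability measures on $Y$ towards Dirac masses.

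The main expected obstacle is topological freeness: one must show that, for every non-inner $\alpha\in\Aut(\cG_E)$, the fixed-point set of $\alpha$ on $Y$ has empty interior. This requires a delicate local analysis of how $\alpha$ acts on finite cylinders of $X_E$, together with a density argument of the following flavour: if $\alpha$ fixed an open set of boundary configurations, it would have to coincide with some element of $\Inn(\cG_E)$ on a sufficiently large portion of $\cG_E$, and then local-to-global rigidity of groupoid automorphisms would force $\alpha\in\Inn(\cG_E)$, contradicting its non-triviality in $\Out(\F(\cG_E))$. Once topological freeness is in place, the theorem follows from the Kalantar--Kennedy characterisation of C*-simplicity in terms of topologically free boundary actions cited in the introduction, completing the proof.
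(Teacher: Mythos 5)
Your high-level skeleton matches the paper's: identify $\Out(\F(\cG_E))$ with $\Aut(\cG_E)/\Inn(\cG_E)$ via Matui's reconstruction, act on the Poulsen simplex of shift-invariant measures (identified with the Hilbert cube by Lindenstrauss--Olsen--Sternfeld), and conclude C*-simplicity from Kalantar--Kennedy. However, each of the three substantive verifications is left at the level of a hope, and in several places the strategy you indicate would not work as stated.

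First, the action itself is not correctly pinned down. A continuous orbit equivalence does not push forward shift-invariant measures to shift-invariant measures (only strong COEs preserve $P(X_E,\sigma_E)$), so ``push-forward of probability measures'' is not an option. The paper instead lets $\Aut(\cG_E)$ act on the ordered cohomology $(\H^E,\H^E_+)\cong\H^1(\cG_E)$ and transports this to $M(X_E,\sigma_E)_+$ via the duality between invariant measures and positive homomorphisms on $\H^E$; triviality of the $\Inn(\cG_E)$-action is then exactly the statement that inner automorphisms act trivially on $\H^1$. Second, minimality does not follow from $\Out(\F(\cG_E))$ containing free subgroups, shift automorphism groups, or all finite groups --- a group can be as rich as you like and still act non-minimally. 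The actual proof requires (a) density of the projectivised periodic measures (Sigmund's specification theorem) and (b) transitivity of the action on cyclic classes of primitive cycles, which is a genuinely hard combinatorial argument built from marker COEs, overlap conditions, and primitive extensions. Third, your topological freeness argument via ``local-to-global rigidity'' is not the mechanism used and it is unclear how it would be made to work on a space of measures (fixing an open set of measures does not localise to a statement about cylinders of $X_E$ in any obvious way). The paper instead exhibits a single measure $\sum_i 2^{-i}\eta_{[p_i]}$ whose projective class has trivial stabiliser; this reduces to faithfulness of $\Out(\cG_E)\acts[E^*_\prim]$, whose proof needs Matsumoto's strong COEs and the Boyle--Krieger theorem on shift-commuting homeomorphisms preserving all periodic orbits. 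Finally, strong proximality --- the technical heart of the paper, involving sequences of marker COEs with data $p^n$, partial equicontinuity estimates, and a reduction to $2$-edge-connected graphs via the classification of Cuntz--Krieger groupoids --- is compressed in your proposal to ``exhibit contracting elements,'' with no construction offered. As it stands the proposal is a plausible outline rather than a proof.
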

Especially, $\Out(V_{n,r})$ has such a boundary action for all Higman--Thompson groups $V_{n,r}$. 
Here, the Hilbert cube is realised as $\Pz M(X_E,\sigma_E)_+$, the projectivisation of the space of positive, shift-invariant Radon measures on the shift of finite type $X_E$.

Since $\F(\cG_E)$ is C*-simple, $\Aut(\F(\cG_E))$ is automatically C*-simple by \cite[Proposition~2.23]{LBMB18}. However, for a group $G$, there is no general relationship between C*-simplicity of $G$ and C*-simplicity of $\Out(G)$. For example, torsion-free hyperbolic groups are C*-simple \cite{delaHarpe88,delaHarpe07}, and every finite group arises as the outer automorphism group of a torsion-free hyperbolic group, see \cite{CIOS25,Paulin88}.

By \cite{Ken20}, $\Out(\F(\cG_E))$ has no non-trivial amenable uniformly recurrent subgroups; in particular, $\Out(\F(\cG_E))$ has trivial amenable radical, which in turn implies that $\Out(\F(\cG_E))$ has trivial center, recovering \cite[Corollary~1.5]{BBCO21} for the Higman--Thompson groups.

Let us mention a connection between our work and C*-simplicity of outer automorphism groups of free groups, which was proven by Bridson and de la Harpe using the action on outer space \cite{BdlH04}. The idea of our proof comes from Uyanik's work \cite{Uyanik14}. 
Let $F_N$ be a free group of finite rank $N\geq 2$, and denote by $\partial F_N$ its Gromov boundary. The transformation groupoid $F_N\ltimes\partial F_N$ is a Cuntz--Krieger groupoid \cite{Mat15}, and it is not difficult to see that the restriction of the action $\Out(F_N\ltimes\partial F_N)\acts \Pz M(\partial F_N,\sigma)_+$ to the subgroup $\Out(F_N)$ is conjugate to the action of $\Out(F_N)$ on the space $\Pz\Curr(F_N)$ of projectivised currents of $F_N$ (cf. \cite{Kap05,Kap06}). The action of $\Out(F_N)$ on the minimal set in $\Pz\Curr(F_N)$ is a topologically free boundary action (cf. \cite{KL07,Uyanik14}).
Our proof is inspired by this case, but is almost entirely different at the technical level.

\subsection*{Outline of the proof}

In Section~\ref{sec:outer}, we introduce \emph{outer automorphism groups of groupoids} $\Out(\cG_E)$. The group $\Out(\cG_E)$ is canonically isomorphic to $\Out(\F(\cG_E))$ (Proposition~\ref{prop:aut}), which is essentially due to Matui \cite{Mat15}. The automorphisms of the groupoid $\cG_E$ are described via \emph{continuous orbit equivalences (COE)} of the one-sided shift of finite type (SFT) $(X_E,\sigma_E)$. There is a series of works of Matsumoto and Matsumoto--Matui \cite{Matsu10, Matsu15, MM14, MM16} on COEs of SFTs, and our work is based on their research. Their works are nicely summarised in \cite{MatBook}, so we cite this book instead of the original articles. 
The action of $\Out(\F(\cG_E))$ on $\Pz M(X_E,\sigma_E)_+$ is induced from the action of $\Out(\F(\cG_E))$ on the cohomology group $\H^1(\cG_E)\cong \H^E$, where $\H^E$ is the dynamical cohomology of the SFT. 
In Section~\ref{ssec:PM+}, we develop tools to analyse the action  $\Out(\F(\cG_E)) \acts \Pz M(X_E,\sigma_E)_+$. 
One important idea is to reduce the analysis of the classes of measures to the classes of \emph{periodic measures} $\eta_{[p]}$, where $p$ is a primitive cycle of the graph. The classes of periodic measureas are dense in $\Pz M(X_E,\sigma_E)_+$ (Proposition~\ref{prop:periodicmeasures}). This is essentially due to Sigmund \cite{Sig74}, which is a variant of Birkhoff's ergodic theorem. 
We then show that the action on the periodic measures is the same as actions on the classes of primitive cycles (Proposition~\ref{prop:varphieta[p]}), and then we observe that periodic measures can be analysed via combinatoric data. 
In order to construct sufficiently many COEs, we introduce \emph{marker COEs} in Section~\ref{ssec:marker}, which is inspired by marker automorphisms of shift spaces \cite{Ash90,BFK90, BLR88}. 
When we want to obtain marker COEs, we need to verify the \emph{overlap conditions}, which guarantee bijectivity of marker COEs. The overlap conditions are not easy to verify in an abstract setting, so there are several technical combinatorial arguments to check the overlap conditions. 
In Section~\ref{ssec:good}, we establish a way to analyse marker COEs via a combinatorial method by introducing \emph{good representatives}. 

In Section~\ref{sec:faithful-transitive} and Section~\ref{ssec:proximal}, we prove that the action $\Out(\F(\cG_E)) \acts \Pz M(X_E,\sigma_E)_+$ is a topologically free boundary action. The proof consists of 3 steps: (1) faithfulness of $\Out(\cG_E) \acts [E^*_\prim]$, (2) transitivity of $\Out(\cG_E) \acts [E^*_\prim]$, and (3) strong proximality of $\Out(\F(\cG_E)) \acts \Pz M(X_E,\sigma_E)_+$. Here, $[E^*_\prim]$ denotes the set of all cyclic-permutation classes of primitive cycles of $E$. 
The result on faithfulness is a generalisation of \cite[Theorem~1.4]{BBCO21}, and its proof 
is based on a theorem by Boyle--Krieger \cite[Theorem~2.5]{BK87} and Matsumoto's \emph{strong COEs} \cite{Matsu15}. Note that the result of Boyle--Krieger is based on the classical Curtis–-Hedlund–-Lyndon theorem \cite[Theorem 6.2.9]{LM21}, which is a classification of automorphisms of two-sided SFTs. The proof of transitivity is purely combinatorial (it is a kind of word problem). The proof consists of several arguments to verify overlap conditions or to examine the primitivity of given words. 
The most complicated part of this article is the proof of strong proximality. 
The basic idea of the key lemma (Lemma~\ref{lem:converge-final}) is as follows: if $f_n,g_n \colon X \to X$ are ``very good" sequences of homeomorphisms on a compact space $X$ and if we have $g_n(x) \to y$ and $f_n(y) \to z$ for a given $x \in X$, then we can expect that $f_ng_n(x) \to z$. Usually, the ``very good" condition should be equicontinuity. However, the marker COEs we use in Section~\ref{ssec:proximal} are not equicontinuous (Remark~\ref{rem:discontinuous}). Instead, we establish several technical lemmas, which amount to equicontinuity in a partial sense.  

Using the classification theorem of Cuntz--Krieger groupoids \cite{MM14}, 
every Cuntz--Krieger groupoid is isomorphic to a Cuntz--Krieger groupoid associated to a primitive graph. This is a well-known fact for operator algebraists, and we use a stronger version of this fact (Proposition~\ref{prop:EKTW}) at the final step of the proof of strong proximality. However, for Proposition~\ref{prop:periodicmeasures} and Proposition~\ref{prop:preservesorbits->inner}, we give an elementary proof using the \emph{periodic decomposition} of SFTs (Section~\ref{ssec:SFT}). In other words, the proofs of those results can be shortened by using the classification. 
In addition, we give an outline of alternative proofs of strong proximality for $\Out(V_2)$ and $\Out(V_{n,r})$ with $r \geq 2$ without using the classification theorem for the readers' convenience who are not familiar with operator algebras  (Section~\ref{ssec:remark}).

\subsection*{Commutator subgroups of Thompson--like groups}
A consequence of Theorem~\ref{thm:main} is the following description of the amenable radical and characterisation of C*-simplicity for the outer automorphism groups of the simple commutator subgroups $\D(\cG_E)$ arising from Cuntz--Krieger groupoids.
\begin{cor}
\label{cor}
For every Cuntz--Krieger groupoid $\cG_E$, the amenable radical of $\Out(\D(\cG_E))$ is isomorphic to 
\begin{equation}
\F(\cG_E)/\D(\cG_E) \cong(\Coker(I-A_E^t)\otimes\Zz/2\Zz)\oplus \Ker(I-A_E^t), 
\end{equation} 
where $A_E$ is the adjaceny matrix of $E$. 
Moreover, $\Out(\D(\cG_E))$ is C*-simple if and only if it has trivial amenable radical.   
\end{cor}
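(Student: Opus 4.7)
The plan is to deduce the corollary by constructing a short exact sequence
\begin{equation}
1 \to \F(\cG_E)/\D(\cG_E) \to \Out(\D(\cG_E)) \to \Out(\F(\cG_E)) \to 1
\end{equation}
and then combining it with Theorem~\ref{thm:main}. First, I would argue that restriction of automorphisms gives an isomorphism $\Aut(\F(\cG_E)) \cong \Aut(\D(\cG_E))$. Since $\D(\cG_E)$ is the commutator subgroup, it is characteristic in $\F(\cG_E)$, which gives a well-defined restriction homomorphism. Injectivity follows from the fact that $\D(\cG_E)$ has trivial centralizer in $\F(\cG_E)$, which is a consequence of the micro-supported/faithful nature of the groupoid action (two elements of $\F(\cG_E)$ that agree on the normal generating set $\D(\cG_E)$ must coincide). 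Surjectivity is a Rubin-type extension statement: every abstract automorphism of $\D(\cG_E)$ is spatially implemented by a homeomorphism of $X_E$ that normalizes $\F(\cG_E)$, yielding an automorphism of $\F(\cG_E)$ whose restriction is the given one. This is of the same flavour as the reconstruction results used to prove Proposition~\ref{prop:aut} and is available from Matui's work.

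With this identification in place, the short exact sequence is a routine diagram chase: an automorphism of $\F(\cG_E)$ becomes inner on $\D(\cG_E)$ precisely when it differs from conjugation by an element of $\D(\cG_E)$ by an automorphism that is trivial on $\D(\cG_E)$, which by the triviality of the centralizer must itself be trivial. Hence the kernel of the induced map $\Out(\D(\cG_E)) \to \Out(\F(\cG_E))$ is the image of $\F(\cG_E) \to \Aut(\D(\cG_E))$ modulo $\Inn(\D(\cG_E))$, which is exactly $\F(\cG_E)/\D(\cG_E)$. Surjectivity is immediate from the surjectivity part of the preceding isomorphism.

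Now for the amenable radical: the normal subgroup $\F(\cG_E)/\D(\cG_E)$ of $\Out(\D(\cG_E))$ is abelian, hence amenable, and is therefore contained in the amenable radical. Conversely, Theorem~\ref{thm:main} implies that $\Out(\F(\cG_E))$ is C*-simple, and C*-simplicity implies trivial amenable radical; hence the image of the amenable radical of $\Out(\D(\cG_E))$ in $\Out(\F(\cG_E))$ is trivial, so the amenable radical sits inside $\F(\cG_E)/\D(\cG_E)$. The two inclusions give equality, and Matui's explicit computation of the abelianisation of $\F(\cG_E)$ in terms of the adjacency matrix $A_E$ supplies the stated isomorphism with $(\Coker(I-A_E^t)\otimes\Zz/2\Zz)\oplus \Ker(I-A_E^t)$.

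For the second part, one direction is the general fact that C*-simplicity implies trivial amenable radical. For the converse, if the amenable radical of $\Out(\D(\cG_E))$ is trivial, then $\F(\cG_E)/\D(\cG_E) = 1$ by the first part, so the sequence collapses to an isomorphism $\Out(\D(\cG_E)) \cong \Out(\F(\cG_E))$, which is C*-simple by Theorem~\ref{thm:main}. The main obstacle is the spatial extension step in establishing $\Aut(\F(\cG_E))\cong \Aut(\D(\cG_E))$; the rest is a formal consequence of the main theorem and Matui's calculation of the abelianisation.
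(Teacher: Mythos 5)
Your proposal is correct and follows essentially the same route as the paper: establish $\Aut(\F(\cG_E))\cong\Aut(\D(\cG_E))$ via restriction (the paper's Proposition~\ref{prop:Aut(F)=Aut(D)}), deduce the short exact sequence $1\to \F(\cG_E)/\D(\cG_E)\to\Out(\D(\cG_E))\to\Out(\F(\cG_E))\to 1$ using trivial centres/centralisers (Corollary~\ref{cor:ses}), pin down the amenable radical from C*-simplicity of $\Out(\F(\cG_E))$, and invoke Matui's computation of the abelianisation. The only difference is cosmetic: you spell out the amenable-radical sandwich argument where the paper cites \cite[Lemma~6.1]{Mat15}.
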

We can equivalently say that $\Out(\D(\cG_E))$ is C*-simple if and only if $\H_0(\cG_E)$ is finite and without $2$-torsion. In particular, $\Out([V_{n,r},V_{n,r}])$ is C*-simple if and only if $n$ is even. 
In addition, Corollary~\ref{cor} shows that the amenable radical of $\Out(\D(\cG_E))$ can be infinite, and this behaviour is different from acylindrically hyperbolic groups, which always have finite amenable radicals. 

\subsection*{Acknowledgements}
C. Bruce would like to thank Kevin Aguyar Brix for many helpful and inspiring conversations about SFTs and outer automorphism groups of groupoids.
This work began at a Research-in-Groups Programme at the ICMS, and we gratefully acknowledge their support.
X. Li has received funding from the European Research Council (ERC) under the European Union’s Horizon 2020 research and innovation programme (grant agreement No. 817597).
T. Takeishi is supported by JSPS KAKENHI grant number 24K06780.

\section{Preliminaries}

\subsection{Boundary actions and C*-simplicity}
Let $G$ be a discrete group and $X$ a compact Hausdorff space equipped with an action of $G$ by homeomorphisms. Recall that the action $G\acts X$ is \emph{minimal} if the orbit $Gx$ is dense in $X$ for every $x\in X$ and is \emph{topologically free} if $\{x\in X : \Stab_G(x)=\{e\}\}$ is dense in $X$, where $\Stab_G(x)$ is the stabiliser subgroup of $x$.
The action $G\acts X$ is said to be \emph{strongly proximal} if for every probability measure $\nu\in P(X)$, the weak$^*$ closure of the orbit $G\nu$ in $P(X)$ contains a point-mass measure $\delta_x$ for some $x\in X$. Here, $G$ acts on the compact space $P(X)$ of Radon probability measures by push-forward. 
The action $G\acts X$ is said to be a \emph{boundary action} if it is minimal and strong proximal. 
The \emph{amenable radical} of $G$ is the largest amenable normal subgroup of $G$, see \cite[Section 4, Lemma 1]{Day57}.

The group $G$ is said to be \emph{C*-simple} if its reduced group C*-algebra is a simple C*-algebra. 
A remarkable characterisation of C*-simplicity was proven by Kalantar and Kennedy:
\begin{theorem}[{\cite[Theorem~6.2]{KK17}}] \label{thm:KK}
A discrete group $G$ is C*-simple if and only if it admits a topologically free boundary action on a compact Hausdorff space.
\end{theorem}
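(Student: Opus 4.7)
The plan is to establish the two implications using substantially different techniques. For the direction $(\Leftarrow)$, suppose $G$ admits a topologically free boundary action $G \acts X$ on a compact Hausdorff space $X$. I would first show that the reduced crossed product $C(X) \rtimes_r G$ is simple: minimality together with topological freeness force every non-zero ideal of $C(X) \rtimes_r G$ to meet $C(X)$ non-trivially in a $G$-invariant ideal, which minimality then rules out (the Kawamura--Tomiyama / Archbold--Spielberg criterion). Separately, I would use strong proximality to derive Powers' averaging property for $C^*_r(G)$: given a self-adjoint $a \in C^*_r(G)$ and $\eps > 0$, there exist $g_1, \dots, g_n \in G$ and convex coefficients $\lambda_i$ with $\| \sum_i \lambda_i u_{g_i} a u_{g_i}^* - \tau(a)\cdot 1 \| < \eps$, where $\tau$ is the canonical trace. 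The substance of this step is to transfer the weak$^{*}$-convergence of push-forwards of probability measures towards Dirac masses (provided by strong proximality) into norm-convergence in $C^*_r(G)$ via a $G$-equivariant unital completely positive map $C^*_r(G) \to C(X)$. Powers' averaging then immediately yields simplicity (and unique trace) of $C^*_r(G)$.

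For the direction $(\Rightarrow)$, suppose $G$ is C*-simple. I would invoke the Furstenberg boundary $\partial_F G$, a compact $G$-space that is itself a boundary action and is universal in the sense that any other $G$-boundary admits a $G$-equivariant continuous surjection from $\partial_F G$. By Hamana's theory, $C(\partial_F G)$ is the $G$-injective envelope of $\Cz$ in the category of unital $G$-operator systems. It therefore suffices to show that the action $G \acts \partial_F G$ is topologically free. I would use $G$-injectivity of $C(\partial_F G)$ to obtain a $G$-equivariant unital completely positive map $\Phi \colon C^*_r(G) \to C(\partial_F G)$ extending the inclusion $\Cz \hookrightarrow C(\partial_F G)$. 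For each $x \in \partial_F G$, composing $\Phi$ with evaluation at $x$ yields a state $\phi_x$ on $C^*_r(G)$, and $G$-equivariance of $\Phi$ together with boundary-rigidity forces the corresponding measure on $\partial_F G$ to be the Dirac mass $\delta_x$. If $g \in G \setminus \{e\}$ fixes some $x \in \partial_F G$, then $\phi_x$ is $\textup{Ad}(u_g)$-invariant, and the collection $\{\phi_x\}_x$ assembles into a structure that, when compared with the canonical trace, produces either a non-trivial tracial state distinct from $\tau$ or a proper ideal of $C^*_r(G)$, contradicting C*-simplicity. Topological freeness of $G \acts \partial_F G$ follows.

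The principal obstacle will be the $(\Rightarrow)$ direction: one must develop enough of Hamana's theory of $G$-injective envelopes to construct the map $\Phi$, and then extract from the existence of a fixed point a \emph{genuine} contradiction with C*-simplicity rather than a soft obstruction. The delicate point is that $\phi_x$ is a priori only a state, not a trace, so the contradiction must be routed through the rigidity of conjugation-invariant states on the reduced C*-algebra of a C*-simple group, or equivalently through the absence of $G$-invariant states on $C^*_r(G) \otimes C(\partial_F G)$ distinct from the canonical ones. Making this precise requires a careful identification of the image of $\Phi$ and its behaviour under stabilisers, and this rigidity argument is the technical heart of the Kalantar--Kennedy theorem.
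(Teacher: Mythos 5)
This statement is not proved in the paper at all: it is imported verbatim as \cite[Theorem~6.2]{KK17}, and only the ``if'' direction is actually used (at the end of the proof of Theorem~\ref{thm:main}). So there is no in-paper argument to compare with, and your sketch has to be measured against the published proofs of Kalantar--Kennedy and Breuillard--Kalantar--Kennedy--Ozawa. Your overall architecture is the correct one (Archbold--Spielberg/Kawamura--Tomiyama for simplicity of $C(X)\rtimes_r G$, a Powers-type averaging argument to descend to $C^*_r(G)$, and the Furstenberg boundary with Hamana's $G$-injectivity for the converse), but two of the mechanisms you propose would fail as written. In the $(\Leftarrow)$ direction, the claim that one can ``transfer the weak$^*$-convergence of push-forwards of measures towards Dirac masses into norm-convergence in $C^*_r(G)$ via a $G$-equivariant u.c.p.\ map $C^*_r(G)\to C(X)$'' does not work: for an arbitrary boundary $X$ no such map need exist (injectivity only provides a $G$-map into $C(\partial_F G)$, and there is no equivariant projection $C(\partial_F G)\to C(X)$), and even if it did, a u.c.p.\ map is contractive, so it can only bound the norm of the \emph{image} --- it gives no upper bound on $\bigl\| \sum_i \lambda_i u_{g_i} a u_{g_i}^* - \tau(a)1 \bigr\|$ inside $C^*_r(G)$. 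The genuine argument proves the norm estimate in a faithful covariant representation (on $\ell^2(G)$ or $L^2(X,\mu)\otimes\ell^2(G)$) via the classical Powers $2/\sqrt{n}$ lemma, using topological freeness to find a point moved off itself by every $g$ in the finite support of $a$ and strong proximality plus a compactness/convexity argument to concentrate \emph{all} measures simultaneously near that point.

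In the $(\Rightarrow)$ direction there are two problems. First, you route the contradiction through ``a non-trivial tracial state distinct from $\tau$''; but uniqueness of the trace is a strictly weaker property than C*-simplicity (this is Le Boudec's theorem \cite{Boudec17}, cited in this very paper), so producing an exotic conjugation-invariant state is not by itself incompatible with C*-simplicity --- one must produce a proper non-zero ideal, or equivalently a representation weakly containing but not weakly equivalent to $\lambda_G$ (in BKKO's formulation, the quasi-regular representation $\lambda_{G/G_x}$ for a point $x$ with non-trivial stabiliser). Second, the step where ``the collection $\{\phi_x\}_x$ assembles into a structure that produces \dots a proper ideal'' is precisely the content of the theorem and is left unproved; the actual argument also needs the fact that $\partial_F G$ is Stonean, so that fixed-point sets of group elements are clopen and failure of topological freeness yields some $g\neq e$ acting as the identity on a non-empty clopen set, which is what feeds the ideal (or weak-containment) construction. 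As it stands, the proposal is a correct road map with the two decisive estimates missing.
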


\subsection{Finite graphs}
Throughout this paper, graphs are assumed to be directed, and $E=(E^0,E^1)$ will denote a finite graph with range and source maps $r$ and $s$, respectively. Let 
\[ E^n := \{ (e_0,\dots,e_{n-1}) \in (E^1)^n \colon r(e_i)=s(e_{i+1})\mbox{ for } i=0,\dots,n-2\}\]
and let $E^*=\bigcup_{n\geq 1} E^n$. 

An element $w \in E^n$ is called a \emph{path}. For path $w=(e_0,\dots,e_{n-1}) \in E^n$, let $w_{[i]} = e_i$ and $|w|=n$ (called the \emph{length} of $w$). Paths are written like $w=w_{[0]}w_{[1]}\dots w_{[n-1]}$. For $0 \leq k,l\leq |w|$, let $w_{[k,l]} = w_{[k]}\dots w_{[l]}$. 

\emph{The empty path} is denoted by $o$. In our convention, we do not consider $o$ as an element of $E^*$. We consider $o$ composable to all edges. 

\begin{definition}
    For paths $v, w \in E^*$, \emph{a $w$-segment in $v$} is an interval of integers $[i,j] \subset \Zz$ such that $v_{[i,j]} = w$. 
    For $S \subseteq E^*$, \emph{an $S$-segment in $v$} is an interval of integers $[i,j] \subset \Zz$ such that $w_{[i,j]}$ is equal to some $s \in S$. 
     For $v, w \in E^*$, $v$ is a \emph{prefix} of $w$ if $v=w_{[0,k]}$ for some $k$, and $v$ is a \emph{suffix} of $w$ if $v=w_{[k,|w|-1]}$ for some $k$. 
\end{definition}

\begin{definition}
    A path $p \in E^*$ is called a \emph{cycle} if $s(p)=r(p)$. A \emph{loop} is a cycle of length $1$. A cycle $p$ is \emph{prime} if there is no edge in $p$ whose source is $s(p)$ except for the first edge. A cycle $p$ is \emph{primitive} if $p=q^n$ for some cycle $q \in E^*$ and $n \geq 1$ implies $n=1$ and $p=q$. 
\end{definition}
By definition, loops are prime cycles, and prime cycles are primitive cycles. In addition, every cycle admits a unique decomposition into a product of prime cycles, which is obtained by dividing the given cycle into cycles at the place where the cycle hits its source. Note that loops, cycles, and primitivity of cycles are standard terminologies for graphs, while prime cycles are usually called first return loops in the standard terminology of symbolic dynamics. 

Let $E_\cycle^*\subseteq E^*$ denote the set of cycles in $E$, and let $E_\prim^*\subseteq E^*$ denote the set of primitive cycles in $E$. We let $[E_\cycle^*]$ and $[E_\prim^*]$ denote the sets of the equivalence classes of cycles and primitive cycles under cyclic permutations. For a cycle $w \in E^*_\cycle$, let $[w]$ denote its equivalence class. Note that if $p \in E^*_{\prim}$, then every $q \in [p]$ is also a primitive cycle. 

Later, we will show a technical variant of the next proposition (see Lemma~\ref{lem:general-cyclic}), so we omit its proof:  
\begin{proposition} \label{prop:general-cyclic}
    For a primitive cycle $p \in E^*_\prim$, we have $|[p]|=|p|$. 
\end{proposition}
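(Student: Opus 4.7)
The plan is to prove $|[p]| = |p|$ by adapting the orbit--stabiliser argument to cyclic permutations of paths, with the key point being that non-trivial stabiliser forces a proper factorisation and hence contradicts primitivity. Write $n = |p|$ and let $\sigma$ denote cyclic shift by one position acting on cycles of length $n$ (it is well defined on cycles because $r(p_{[n-1]}) = r(p) = s(p) = s(p_{[0]})$). The orbit of $p$ under $\langle \sigma \rangle \cong \Zz/n\Zz$ has at most $n$ elements, which gives the upper bound $|[p]| \le n$ for free.

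For the matching lower bound, I would argue by contradiction: if $|[p]| < n$, then $\sigma^i(p) = \sigma^j(p)$ for some $0 \le i < j < n$, which rearranges to $\sigma^d(p) = p$ for $d = j - i \in \{1, \dots, n-1\}$. The set of such $d$, together with $0$ and its negatives, is a subgroup of $\Zz$ containing $n\Zz$; picking its minimal positive generator $d_0$ yields $d_0 \mid n$ with $1 \le d_0 < n$ and $\sigma^{d_0}(p) = p$. Unpacking this equality componentwise gives the periodicity relation $p_{[i]} = p_{[(i + d_0) \bmod n]}$ for every $i$.

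Setting $q := p_{[0, d_0 - 1]}$, the plan is to show that $q$ is a cycle and that $p = q^{n/d_0}$, which contradicts primitivity since $n/d_0 \ge 2$. The internal path condition for $q$ is immediate from that of $p$. For the cycle condition $s(q) = r(q)$, the only input needed is the $i = 0$ case of the periodicity relation, which says $p_{[d_0]} = p_{[0]}$; this forces $r(q) = r(p_{[d_0 - 1]}) = s(p_{[d_0]}) = s(p_{[0]}) = s(q)$. Iterating the periodicity relation gives $p_{[k d_0 + j]} = p_{[j]}$ for all $0 \le k < n/d_0$ and $0 \le j < d_0$, which is precisely the factorisation $p = q^{n/d_0}$.

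The argument is entirely elementary. The only mildly non-trivial step is noticing that the candidate factor $q$ inherits the cycle condition from $p$ via the single identity $p_{[d_0]} = p_{[0]}$, so that $q$ qualifies as a proper cyclic factor of $p$; no serious obstacle is expected, and this should also be the template for the technical variant Lemma~\ref{lem:general-cyclic} promised later.
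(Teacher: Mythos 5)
Your argument is correct, and it is essentially the argument the paper has in mind: the paper omits the proof of this proposition, deferring to the technical variant Lemma~\ref{lem:general-cyclic}, which runs the same ``non-trivial cyclic symmetry forces a proper power, contradicting primitivity'' reasoning at the level of prime-cycle factors instead of individual edges. Your letter-level orbit--stabiliser version, including the check that $q=p_{[0,d_0-1]}$ is genuinely a cycle via $p_{[d_0]}=p_{[0]}$, is complete and matches that template.
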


\begin{definition}
    Let $n \in \Zz_{\geq 1}$. 
    \begin{enumerate}
        \item The \emph{subdivided circle}, denoted by $S^1_n$, is the graph defined as follows: 
              The vertex set is $(S^1_n)^0=\{v_i \mid i \in \Zz/n\Zz\}$, the edge set is $(S^1_n)^1=\{e_i \mid i \in \Zz/n\Zz\}$, and we have $s(e_i)=v_i, r(e_i)=v_{i+1}$. 
        \item The \emph{$n$-rose}, denoted by $R_n$, is the graph consisting of a unique vertex and $n$ loops at the vertex. 
    \end{enumerate}
\end{definition}

\begin{definition}
    Let $E$ be a finite graph. For $e \in E^1$, let $E \setminus e$ denote the graph whose vertex set is $E^0$ and whose edge set is $E^1 \setminus \{e\}$. 
    A finite graph $E$ is said to be \emph{2-edge-connected} if $E$ is strongly connected and $E \setminus e$ is strongly connected for all $e \in E^1$. 
\end{definition}

\begin{definition}
    Let $N \in \Zz_{>0}$. 
    The \emph{$N$-th higher edge graph} of $E$, denoted by $E^{[N]}$, is the graph whose vertex set is $E^{N-1}$ and whose edge set is $E^N$, with the range and source maps given by $s(w)=w_{[0,N-2]}$ and $r(w)=w_{[1,N-1]}$ for $w \in E^N= (E^{[N]})^1$. 
\end{definition}

\begin{proposition}\label{prop:2edge}
    If $E$ is 2-edge-connected and $E \neq R_2$, then for every $(e,f)\in E^2$, the graph $E^{[2]}\setminus ef$ is strongly connected.
\end{proposition}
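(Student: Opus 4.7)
The plan is to reduce to a single connectivity question and then solve it by routing carefully through $E$. The standard observation I would invoke is: if $G$ is a strongly connected directed graph and $\alpha\colon x\to y$ is an edge of $G$, then $G\setminus\alpha$ is strongly connected if and only if there is still a directed path from $x$ to $y$ in $G\setminus\alpha$ (any walk in $G$ that uses $\alpha$ can then be rerouted along this alternative path). Since strong connectedness of $E$ immediately gives strong connectedness of $E^{[2]}$, it suffices to produce one path from the vertex $e$ to the vertex $f$ of $E^{[2]}$ that avoids the edge $ef$. Translating back to $E$, this means producing a path $e\, g_1\, g_2 \cdots g_m\, f$ in $E$ with $m\geq 1$ in which no consecutive pair of letters equals $(e,f)$.

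Set $v := r(e) = s(f)$. My first step is to locate an auxiliary edge $h\in E^1$ with $s(h)=v$ and $h\notin\{e,f\}$. This is where the hypotheses are used most seriously. Observe that $f$ is always an out-edge of $v$, while $e$ is an out-edge of $v$ only when $e$ is a loop at $v$. If $v$ admits no out-edge outside $\{e,f\}$, then in $E\setminus f$ the sole possible out-edge of $v$ is the loop $e$, so $v$ cannot leave itself in $E\setminus f$; if $E$ has more than one vertex this contradicts strong connectedness of $E\setminus f$. If $E$ has a single vertex then $E$ is a rose $R_n$, and the hypothesis $E\neq R_2$ forces either $n\geq 3$ (giving another loop available as $h$) or $n=1$, in which case $E^{[2]}\setminus ef$ has a unique vertex and no edges and the claim is trivial.

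With $h$ in hand, I will invoke strong connectedness of $E\setminus e$ to choose a walk $w=g_1\cdots g_m$ from $r(h)$ to $v=s(f)$ inside $E\setminus e$; in the degenerate case $r(h)=v$ the walk $w$ is empty. The candidate is then $e\, h\, g_1\cdots g_m\, f$ in $E$. Verifying that no consecutive pair equals $(e,f)$ is routine: the pair $(e,h)$ is safe because $h\neq f$; each of $(h,g_1)$ and the interior pairs $(g_i,g_{i+1})$ is safe because $h\neq e$ and no letter of $w$ equals $e$; and the pair $(g_m,f)$ (or $(h,f)$ when $m=0$) is safe because its first letter is again not $e$.

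The main obstacle is the existence of the auxiliary edge $h$ distinct from both $e$ and $f$; once it is produced, the walk construction and verification are immediate. Both halves of $2$-edge-connectedness are genuinely used (strong connectedness of $E\setminus f$ to produce $h$, strong connectedness of $E\setminus e$ to route $w$), and the exclusion $E\neq R_2$ is precisely what rules out the single residual obstruction in the rose case.
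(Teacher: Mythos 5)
Your proof is correct and follows essentially the same strategy as the paper: reduce the claim to exhibiting a single path in $E$ that begins with $e$, ends with $f$, and avoids the consecutive pair $ef$, then build that path using both halves of $2$-edge-connectedness (the paper detours $e\,w_1\,w_2\,f$ through a second vertex $v'$ with $w_1\in(E\setminus f)^*$ and $w_2\in(E\setminus e)^*$, while you use $E\setminus f$ only to produce the auxiliary out-edge $h$ and then route back through $E\setminus e$ --- a minor variation). Your explicit treatment of the rose cases, which the paper dismisses as obvious, is also fine.
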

\begin{proof}
    Let $(e,f) \in E^2$. It suffices to show that there exists a path $w \in E^*$ which begins with $e$, ends with $f$, and does not go through $ef$. If $E=R_n$ with $n \geq 3$, then this is obvious. We may assume that $|E^0| \geq 2$. Let $v=r(e)=s(f)$, and choose $v' \in E^0 \setminus \{v\}$. Choose a path $w_1 \in (E\setminus f)^*$ from $v$ to $v'$. Similarly, choose a path $w_2 \in (E\setminus e)^*$ from $v'$ to $v$. Then, the path $ew_1w_2f$ does not go through $ef$. 
\end{proof}

We shall use the \emph{periodic decomposition} of strongly connected graphs. 
Recall that a graph $E$ is said to be \emph{primitive} if there exists $N >0$ such that for all $v_1,v_2 \in E^1$, there is a path of length $N$ from $v_1$ to $v_2$. Primitive graphs are strongly connected. Conversely, for a strongly connected graph $E$, there exists an integer $d \geq 1$, which is called a \emph{period} of $E$, and a decomposition 
\[ E^0 = E^0_0 \sqcup \dots \sqcup E^0_{d-1} \]
such that if $e \in E^1$ and $s(e) \in E^0_{i}$, then $r(e) \in E^0_{i+1}$, where indices are considered modulo $d$. Moreover, the graph $E_0$ is primive, where $E_0$ is the graph whose vertex set is $E^0_0$ and whose edge set is
\[ E^1_0 := \{ w \in E^d \mid s(w) \in E^0_0\}. \]
The period $d$ is equal to the GCD of the lengths of all cycles in $E$. 
See \cite[Section~4.5]{LM21} for the details.

\subsection{Shifts of finite type} \label{ssec:SFT}
Let $E$ be a strongly connected graph which is not a subdivided circle. Let 
\[ X_E := \left\{ x =(e_i)_{i=0}^\infty \in \prod_{i=0}^\infty E^1  \mid (e_i, e_{i+1}) \in E^2 \text{ for all } i \in \Nz \right\}. \]
The $i$-th coordinate of $x$ is denoted by $x_{[i]}$.  
Define $\sigma_E \colon X_E \to X_E$ by $\sigma_E(x)_{[i]} = x_{[i+1]}$. Similarly, let 
\[ \overline{X}_E := \left\{ \bar{x} =(e_i)_{i=-\infty}^\infty \in \prod_{i=-\infty}^\infty E^1  \mid (e_i, e_{i+1}) \in E^2 \text{ for all } i \in \Zz \right\}. \]
We use the same notation for the $i$-th coordinate. 
Define $\bar{\sigma}_E \colon \overline{X}_E \to \overline{X}_E$ by $\bar{\sigma}_E(\bar{x})_{[i]} = \bar{x}_{[i+1]}$.  
\begin{definition}
    The semigroup dynamical system $(X_E,\sigma_E)$ is called the \emph{one-sided shift of finite type (SFT)} attached to the graph $E$. The dynamical system $(\overline{X}_E,\bar{\sigma}_E)$ is called the \emph{two-sided shift of finite type (SFT)} attached to the graph $E$. 
\end{definition}
The canonical projection $\ol{X}_E\to X_E$ is denoted by $\pi_E$. For $x \in X_E$ and $i<j$, let $x_{[i,j]}$ denote the finite path $x_{[i]}\dots x_{[j]} \in E^*$, and let $x_{[i,\infty)}$ denote the infinite path $x_{[i]}x_{[i+1]}\dots$. We use the same notation for $\bar{x} \in \ol{X}_E$.

For $w \in E^*$, its cylinder set is denoted by
\[ \cZ(w) = \{ x \in X_E \mid x_{[0,|w|-1]}=w\}. \]

Note that every edge shift can be viewed as a vertex shift (see \cite[Section~2.2.2]{MatBook}).  
Explicitly, define an $E^1\times E^1$ matrix $A$ by $A_{e,f}=1$ if and only if $r(e)=s(f)$. Then, $(X_E,\sigma_E)\cong (X_A,\sigma_A)$.
We will freely use results for vertex shifts from 
\cite{MatBook} 
without reference from here on.

Using the periodic decomposition of $E$, there is a decomposition 
\[ X_E = X_0 \sqcup\dots\sqcup X_{d-1}\]
such that $\sigma_E(X_i)=X_{i+1}$ (the indices are modulo $d$) and the subshift  $(X_0,\sigma_E^d)$ is isomorphic to $(X_{E_0}, \sigma_{E_0})$. Concretely, we have 
\[ X_i = \{ x \in X_E \mid s(x_{[0]}) \in E^0_i\}.\]
We have a similar decomposition for two-sided shifts. 
In this article, the periodic decomposition is used when we apply classical results for $\Aut(\overline{X}_E, \bar{\sigma}_E)$. Namely, we shall use Sigmund's specification \cite{Sig74} and a theorem by Boyle--Krieger \cite{BK87}. 

\subsection{Ample groupoids, groupoid homology, and cohomology}
The reason why we need groupoids for the study of the outer groups of Higman--Thompson groups is concentrated in Proposition~\ref{prop:aut}, which is essentially due to Matui. We shall identify the outer groups of Thompson-like groups with a quotient of the groups of COEs on SFTs, and the main tool for the study will be COEs. In this subsection, we collect the minimum necessary definitions and facts for groupoids. 

A \emph{groupoid} $\cG$ is a small category such that all morphisms are invertible. The object set is denote by $\cG^{(0)}$, and the morphism set is denoted by $\cG^{(1)}$. We regard $\cG^{(0)} \subseteq \cG^{(1)}$ and $\cG^{(1)}=\cG$ by identifying objects and their identity morphisms. For $g \in \cG$, its domain and codomain is denoted by $\s(g)$ and $\r(g)$, respectively. The maps $\s, \r \colon \cG \to \cG^{(0)}$ are called the source map and the range map, respectively. 
A \emph{topological groupoid} is a groupoid such that $\s, \r$, multiplication, and taking inverses are all continuous. In this article, groupoids are always assumed to be second countable, locally compact, and Hausdorff. 

A groupoid is said to be \emph{\'etale} if $\s$ and $\r$ are local homeomorphisms. An \'etale groupoid $\cG$ is \emph{ample} if $\cG^{(0)}$ is totally disconnected. An \'etale groupoid $\cG$ is \emph{minimal} if $\cG x$ is dense for all $x$, where $\cG x = \{ \r(g) \mid g \in \cG, \s(g)=x\}$. An \'etale groupoid $\cG$ is \emph{essentially principal} or \emph{effective} if the interior of ${\rm Iso}(\cG)$ is $\cG^{(0)}$, where ${\rm Iso}(\cG)=\{g \in \cG \mid \s(g)=\r(g)\}$. 
In this article, groupoids are always assumed to be ample, minimal, effective, and $\cG^{(0)}$ is compact. 

For the details of the basic theory of \'etale groupoids, see \cite[Chapter~5]{MatBook}. 

A clopen subset $U \subseteq \cG$ is called a \emph{bisection} if $\s|_U$ and $\r|_U$ are homeomorphisms onto their ranges. A bisection $U$ is called a \emph{global bisection} if $\s(U)=\r(U)=\cG^{(0)}$. 

\begin{definition} \cite[Definition~2.3]{Mat12}
    For a groupoid $\cG$, the \emph{topological full group} $\F(\cG)$ of $\cG$ is the group consisting of all global bisections. 
\end{definition}
The group operation is defined as follows: For $U,V \in \F(\cG)$, the product $UV$ is defined by
\[ UV := \{ gh \mid g \in U, h \in V, \s(g)=\r(h)\} \]
and the inverse $U^{-1}$ is defined by 
\[ U^{-1} := \{ g^{-1} \mid g \in U\}. \]
Since $\cG$ is assumed to be second countable, $\F(\cG)$ is a countable group. 

We shall use the 1st groupoid cohomology (\cite[Definition~9.1.5]{MatBook}).  
Let $\Hom(\cG,\Zz)$ be the abelian group of continuous groupoid homomorphisms from $\cG$ to $\Zz$, and put $\partial\colon C(\cG^{(0)},\Zz)\to \Hom(\cG,\Zz)$ by $\partial(\xi)(g)\coloneqq \xi(\r(g))-\xi(\s(g))$. The group $\H^1(\cG)$ is then 
\[
\H^1(\cG)\coloneqq \Hom(\cG,\Zz)/\{\partial(\xi): \xi\in C(\cG^{(0)},\Zz)\}.
\]
A groupoid isomorphism $\alpha \colon \cG \to \cH$ induces the isomorphism $\H^1(\alpha) \colon \H^1(\cH) \to \H^1(\cG)$ defined by $[\rho]_1 \mapsto [\rho \circ \alpha]_1$ for $\rho \in \Hom(\cH,\Zz)$. 

The groupoid we shall use is the \emph{Cuntz--Krieger groupoid} $\cG_E$ (\cite[Chapter~5.3]{MatBook}). 
For a strongly connected graph $E$ with associated SFT $(X_E,\sigma_E)$, we let 
\begin{equation}
\cG_E\coloneqq\{(x,k-l,y)\in X_E\times\Zz\times X_E : x,y\in E_E, k,l\in\Nz,\text{ and }\sigma_E^k(x)=\sigma_E^l(y)\}.
\end{equation}
The groupoid operation is $(x,n,y)(y,m,z)=(x,n+m,z)$. The range and source maps of $\cG_E$ are given by $\mfr((x,k-l,y))=x$ and $\mfs((x,k-l,y))=y$. The sets 
\begin{equation}
\cZ(U,k,l,V)\coloneqq \{(x,k-l,y)\in\cG_E : x\in U,y\in V, \sigma_E^k\vert_U\text{ is injective, and }\sigma_E^l\vert_V\text{ is injective}\}
\end{equation}
form a basis for an \'etale topology on $\cG_E$, which makes $\cG_E$ an ample groupoid. 
Note that this topology is different from the product topology. 

The groupoid $\cG_E$ is effective and minimal if $E$ is strongly connected and $E \not\cong S^1_n$ (see, e.g.,  \cite[Proposition 5.3.2]{MatBook}). 

\subsection{Higman--Thompson groups} \label{ssec:HT}
For $r,n\in\Zz_{>0}$ with $n\geq 2$, consider the $r\times r$ matrix
\begin{equation}
M=
\begin{bmatrix}
0 & 0 & \cdots & 0 & n \\
1 & 0 & \cdots & 0 & 0 \\
0 & 1 & \cdots & 0 & 0 \\
\vdots & \vdots & \ddots & \vdots & \vdots \\
0 & 0 & \cdots & 1 & 0
\end{bmatrix},
\end{equation}
and let $E$ be the directed graph with $r$ vertices whose adjacency matrix is $M$. Matui showed in \cite[Section~6.7.1]{Mat15} that $\F(\cG_E)$ is isomorphic to the \emph{Higman--Thompson group} $V_{n,r}$, as defined in \cite{Higman74}. 
Consequently, if $E=R_n$ for $n \geq 2$, then $\F(\cG_E)$ is isomorphic to $V_n:=V_{n,1}$.

\subsection{Continuous orbit equivalences of SFTs}
Let $E,F$ be strongly connected graphs which are not subdivided circles. 
The following notion is due to Matsumoto.
\begin{definition}[{\cite[Chapter~4]{MatBook}}]
A homeomorphism $\varphi\colon X_E\to X_F$ is a \emph{continuous orbit equivalence (COE)} if there exists continuous maps $k,l\colon X_E\to\Nz$ and $k',l'\colon X_F\to\Nz$ such that
\begin{equation}
\label{eqn:COE}
    \sigma_F^{k(x)}(\varphi(\sigma_E(x)))=\sigma_F^{l(x)}(\varphi(x))\quad \text{ for all }x\in X_E,
\end{equation}
and 
\begin{equation}
\label{eqn:COE'}
    \sigma_E^{k'(x)}(\varphi^{-1}(\sigma_F(x)))=\sigma_E^{l'(x)}(\varphi^{-1}(x))\quad \text{ for all }x\in X_F.
\end{equation}
\end{definition}

The pair $(k,l)$ is called an \emph{$\varphi$-cocycle pair}. 
\begin{definition}
We let $\COE(X_E,\sigma_E)$ denote the group of all COEs from $X_E$ to $X_E$.    
\end{definition}
The composition of two COEs is again a COE. In particular, 
$\COE(X_E,\sigma_E)$ is indeed a group under composition (\cite[Lemma~4.1.7]{MatBook}). 
Given $x\in X_E$, we let $[x]_\tail$ denote the tail equivalence class of $x$, i.e.,
\begin{equation}
    [x]_\tail\coloneqq \{y\in X_E : \text{ there exist }n,m\geq 0\text{ with }\sigma_E^n(x)=\sigma_E^m(y)\}.
\end{equation}

A point $x\in X_E$ is \emph{eventually periodic} if there exists natural numbers $n\neq m$ such that $\sigma_E^n(x)=\sigma_E^m(x)$. 
Let $X_E^{ep}$ denote the subset of $X_E$ consisting of all eventually periodic points. Any $x\in X_E^{ep}$ can be written as $x=wp^\infty$ for some $p\in E_\prim^*$ and $w \in E^*$.
For $p,q\in E_\prim^*$ and $v,w \in E^*$, we have $[vp^\infty]_\tail=[wq^\infty]_\tail$ if and only if $[p]=[q]$, so there is a bijection 
\begin{equation}
[E_\prim^*]\to \{[x]_\tail : x\in X_E^{ep} \},\quad [p]\mapsto [p^\infty]_\tail.
\end{equation} 
For every COE $\varphi \colon X_E \to X_F$, we have $\varphi(X_E^{ep})=X_F^{ep}$ (\cite[Proposition~4.2.4]{MatBook}). Hence, a COE $\varphi \colon X_E \to X_F$ induces a bijection $[E^*_\prim] \to [F^*_\prim]$, which we shall denote by $[p]\mapsto \varphi[p]$. We have  $\varphi[p]=[q]$ if and only if $\varphi([p^\infty]_\tail)=[q^\infty]_\tail$.
In particular, we have a canonical action $\COE(X_E,\sigma_E)\acts [E_\prim^*]$.  

\subsection{Ordered cohomology for SFTs}
\label{ss:cohomology}
Let $E,F$ be strongly connected graphs which are not subdivided circles. 
The ordered cohomology group of the one-sided SFT $(X_E,\sigma_E)$ is defined in \cite{MM14} as the analogue of the ordered cohomology for two-sided SFTs from \cite{BH96,Poon89}. It is the ordered group $(\H^E,\H^E_+)$, where
\begin{equation}
\H^E\coloneqq C(X_E,\Zz)/\{f-f\circ\sigma_E :f\in C(X_E,\Zz)\},
\end{equation}
and the positive cone is defined as 
$\H^E_+\coloneqq \{[f] \in \H^E : f\geq0\}$. 
There is a homomorphism $\Phi_E\colon \Hom(\cG_E,\Zz)\to C(X_E,\Zz)$ defined by 
\[
\Phi_E(\rho)(x)\coloneqq\rho(x,1,\sigma_E(x))
\]
for all $\rho \in \Hom(\cG_E,\Zz)$. The map $\Phi_E$ descends to an isomorphism $\Phi_E\colon\H^1(\cG_E)\overset{\cong}{\to} \H^E$ (\cite[Proposition~9.1.8]{MatBook}). 
The isomorphism $\Phi_E\colon\H^1(\cG_E)\overset{\cong}{\to} \H^E$ induces an action $\COE(X_E,\sigma_E)\acts \H^E$ by order isomorphisms. Explicitly, 
by \cite[Proposition~9.3.8]{MatBook}, 
each COE $\varphi \colon X_E \to X_F$ gives rise to an invertible continuous linear map of Banach spaces $\Psi_\varphi\colon C(X_F,\Cz)\to C(X_E,\Cz)$ by
\begin{equation}
\label{eqn:Psi}
    \Psi_\varphi(f)(x)\coloneqq\sum_{i=0}^{l(x)-1}f(\sigma_F^i(\varphi(x)))-\sum_{j=0}^{k(x)-1}f(\sigma_F^j(\varphi(\sigma_E(x))))
\end{equation}
for all $x\in X_E$, where $(k,l)$ is any $\varphi$-cocycle pair. Further, $\Psi_\varphi(C(X_F,\Zz))=C(X_E,\Zz)$, and $\Psi_\varphi$ descends to an isomorphism of ordered groups
\begin{equation}
    \ol{\Psi}_\varphi\colon (\H^F,\H^F_+)\overset{\cong}{\to} (\H^E,\H^E_+),\quad [f]\mapsto [\Psi_\varphi(f)],
\end{equation}
see \cite[Theorem~9.3.11]{MatBook}. 
Then, we have $\ol{\Psi}_\varphi = \Phi_E \circ \H^1(\alpha) \circ \Phi_F^{-1}$, where $\alpha \in \Out(\cG_E)$ is such that $\alpha^0=\varphi$. 
Now the action $\COE(X_E,\sigma_E)\acts \H^E$ is given by $\varphi\mapsto\overline{\Psi}_\varphi^{-1}$.

A measure $\mu$ on $X_E$ is said to be \emph{shift-invariant} if $(\sigma_E)_*\mu=\mu$, where $(\sigma_E)_*$ denotes the push-forward. Following the notation from \cite{MM16},
we let $M(X_E,\sigma_E)_+$ be the cone of regular Borel shift-invariant measures on $X_E$, and let $P(X_E,\sigma_E)$ be the simplex of regular Borel shift-invariant probability measures on $X_E$. 
 
Recall that a homomorphism $\phi\colon \H^E\to\Rz$ is said to be \emph{positive} if $\phi(\H^E_+)\subseteq \Rz_{\geq 0}$ (cf. \cite{Good86}). Each $\mu\in M(X_E,\sigma_E)_+$ gives rise to a positive homomorphism $\H^E\to\Rz$ by $\mu(f)=\int_{X_E}f\,d\mu$, and Riesz representation theorem implies that this assignment gives an isomorphism of $M(X_E,\sigma_E)_+$ with the cone of positive homomorphisms of $\H^E$ (cf. \cite[Proposition~2.1]{Poon89} where this is observed for two-sided SFTs). Combining this with \cite[Lemma 9.1.1]{MatBook}, we see that the canonical projection map $\pi_E\colon\ol{X}_E\to X_E$ induces an isomorphism $P(\ol{X}_E,\ol{\sigma}_E)\cong P(X_E,\sigma_E)$, where $P(\ol{X}_E,\ol{\sigma}_E)$ is the simplex of shift-invariant regular Borel probability measures on $\ol{X}_E$.

\begin{definition}
    Given a class of a primitive cycle $[p]\in [E_\prim^*]$, we let 
\begin{equation}
   \eta_{[p]}\coloneqq  \sum_{i=0}^{|p|-1}\delta_{\sigma_E^i(p^\infty)}
\end{equation}
be the associated (unnormalised) shift-invariant measure in $M(X_E,\sigma_E)_+$ supported on the tail equivalence class of $p^\infty$. Such measures are called \emph{periodic measures}. 
\end{definition}

\section{Outer automorphism groups of ample groupoids} \label{sec:outer}

\subsection{The definition of $\Out(\cG)$} 
For a groupoid $\cG$, we let $\Aut(\cG)$ denote the group of homeomorphic automorphisms of $\cG$. For a bisection $U\subseteq\cG$, put $\s_U\coloneqq\s\vert_U$ and $\r_U\coloneqq \r\vert_U$.

There is an injective homomorphsim $\Ad\colon\F(\cG)\to \Aut(\cG)$ given by 
\begin{equation*}    
\Ad(U)(g)=Ug U^{-1}\coloneqq (\s_U^{-1}(\r(g)))g (\r_U^{-1}(\s(g)))
\end{equation*}
for all  $U\in\F(\cG)$ and $g\in\cG$. A short calculation shows that $\alpha\Ad(U)=\Ad(\alpha(U))\alpha$ for all $\alpha\in\Aut(\cG)$ and $U\in\F(\cG)$. In particular, 
the image of $\F(\cG_E)$ is a normal subgroup of $\Aut(\cG)$.

\begin{definition}
We let $\Inn(\cG)$ denote the image of $\F(\cG)$ in $\Aut(\cG)$ under $\Ad$. Elements of $\Inn(\cG)$ are called \emph{inner automorphisms} of $\cG$. 
The \emph{outer automorphism group} of $\cG$ is $\Out(\cG)\coloneqq \Aut(\cG)/\Inn(\cG)$.
\end{definition}
For a bisction $U \subseteq \cG$, let $\pi_U \colon \s(U) \to \r(U)$ denote the map 
$\pi_U := (\r_U)\circ (\s_U)^{-1}$. For $U \in \F(\cG)$, we have $\Ad(U)(x)=\pi_U(x)$ for $x \in \cG^{(0)}$ by definition.

For $\alpha \in \Aut(\cG)$, let $\alpha^0$ denote the restriction of $\alpha$ to $\cG^{(0)}$. 
The next proposition implies that $\alpha$ is inner if and only if $\alpha$ is homologically similar to the identity in the sense of \cite[Definition~3.4]{Mat15}. 

\begin{proposition}
\label{prop:inner2}
    Let $X=\cG^{(0)}$, and let $\alpha\in\Aut(\cG)$. Then, $\alpha$ is inner if and only if there exists a 
    continuous open map $b \colon X \to \cG$ such that $\s(b(x))=x$ and $\r(b(x))=\alpha^0(x)$ for all $x \in X$. 
\end{proposition}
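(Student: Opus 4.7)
The plan is to handle the two directions separately. The forward direction is a routine unwinding of definitions, while the converse requires building a global bisection from $b$ and then invoking the standing hypothesis of effectiveness to upgrade agreement on units to agreement on all of $\cG$.

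For $(\Rightarrow)$, if $\alpha = \Ad(U)$ with $U \in \F(\cG)$, I would set $b(x) \coloneqq \s_U^{-1}(x)$. Since $U$ is a clopen global bisection, $\s_U$ is a homeomorphism onto $X$, so $b$ is a continuous open map into $\cG$ with $\s(b(x)) = x$ and $\r(b(x)) = \pi_U(x) = \alpha^0(x)$ directly from the definitions.

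For $(\Leftarrow)$, given $b$ as in the statement, I would take $U \coloneqq b(X)$. Since $\s \circ b = \id_X$, the map $b$ is injective, hence a continuous bijection from the compact Hausdorff space $X$ onto $U$ and therefore a homeomorphism. Openness of $b$ makes $U$ open in $\cG$, while compactness of $X$ makes $U$ closed, so $U$ is clopen. Then $\s|_U = b^{-1}$ and $\r|_U = \alpha^0 \circ b^{-1}$ are homeomorphisms $U \to X$, which shows $U \in \F(\cG)$. A direct calculation yields $\Ad(U)^0(x) = \pi_U(x) = \r(b(x)) = \alpha^0(x)$, hence $\Ad(U)^0 = \alpha^0$.

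To conclude $\alpha = \Ad(U)$, I would prove the general rigidity fact that in an effective ample groupoid any $\beta \in \Aut(\cG)$ with $\beta^0 = \id$ must equal $\id$, and apply it to $\beta \coloneqq \alpha \circ \Ad(U)^{-1}$. For a compact open bisection $V$, the image $\beta(V)$ is a compact open bisection sharing its source, range, and induced partial homeomorphism $\pi$ with $V$ (all three are preserved because $\beta$ fixes units). Consequently $V \beta(V)^{-1}$ is a compact open bisection entirely contained in ${\rm Iso}(\cG)$, so by effectiveness it lies in the interior of the isotropy, which is $\cG^{(0)}$; comparing sources and ranges of its elements forces $\beta(v) = v$ for every $v \in V$. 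Since compact open bisections form a basis of $\cG$, this gives $\beta = \id$. This last rigidity step is the main technical obstacle, as the passage from agreement on $\cG^{(0)}$ to agreement on all of $\cG$ is not automatic and depends essentially on both the effectiveness and the ampleness of $\cG$.
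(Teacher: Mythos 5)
Your proof is correct and follows the same route as the paper: take $b=(\s_U)^{-1}$ in the forward direction and $U=b(X)$ in the converse. The one substantive difference is that you explicitly justify the final identity $\alpha=\Ad(U)$ (rather than merely $\alpha^0=\Ad(U)^0$) via the rigidity fact that an automorphism of an effective \'etale groupoid fixing $\cG^{(0)}$ pointwise is the identity; the paper asserts this step without proof, and your argument for it is correct.
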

\begin{proof}
    Suppose $\alpha$ is inner, and let $\alpha = \Ad (U)$ for a full bisection $U \in \F(\cG)$. By definition, we have $\alpha^0 = \pi_U = (\r_U) \circ (\s_U)^{-1}$, so that $b := (\s_U)^{-1}$ satisfies the desired property. 
    Conversely, let $b \colon X \to \cG$ be a continuous map as in the statement. Then, $U=\{b(x) \mid x \in X\}$ is a global bisection such that $\alpha = \Ad(U)$. 
\end{proof}

For Cuntz--Krieger groupoids, we have the following well-known result, which is contained in, e.g., the proof of \cite[Theorem~9.2.2]{MatBook}.
\begin{proposition}
\label{prop:Aut=COE}
For all $\alpha\in\Aut(\cG_E)$, we have $\alpha^0\in\COE(X_E,\sigma_E)$, and the map $\Aut(\cG_E)\to \COE(X_E,\sigma_E)$ given by $\alpha\mapsto\alpha^0$ is an isomorphism.
\end{proposition}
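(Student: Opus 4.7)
My proof proceeds in four steps: showing $\alpha^0\in\COE(X_E,\sigma_E)$, verifying the homomorphism property, establishing surjectivity, and proving injectivity. For the first step, fix $\alpha\in\Aut(\cG_E)$ and $e\in E^1$; the set
\[
U_e=\cZ(\cZ(e),1,0,\sigma_E(\cZ(e)))=\{(x,1,\sigma_E(x)) : x\in\cZ(e)\}
\]
is a compact open bisection whose range map restricts to a homeomorphism onto $\cZ(e)$. Since $\alpha$ is a homeomorphism, $\alpha(U_e)$ is again a compact open bisection and, by the compact-open basis of $\cG_E$, admits a finite cover by basic bisections $\cZ(V_j,k_j,l_j,W_j)$ with $j=1,\dots,N$. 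Assigning to each $x\in\cZ(e)$ the index $j(x)$ of a piece containing $\alpha(x,1,\sigma_E(x))$ produces a clopen partition of $\cZ(e)$, and setting $k(x):=l_{j(x)}$, $l(x):=k_{j(x)}$ yields locally constant, hence continuous, functions $k,l\colon\cZ(e)\to\Nz$ satisfying $\sigma_E^{k(x)}(\alpha^0(\sigma_E(x)))=\sigma_E^{l(x)}(\alpha^0(x))$. Gluing across the partition $X_E=\bigsqcup_{e\in E^1}\cZ(e)$ and applying the same argument to $\alpha^{-1}$ gives continuous cocycles verifying \eqref{eqn:COE} and \eqref{eqn:COE'}, so $\alpha^0\in\COE(X_E,\sigma_E)$. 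The homomorphism property is immediate from $(\alpha\beta)^0=\alpha^0\circ\beta^0$.

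For surjectivity, given $\varphi\in\COE(X_E,\sigma_E)$ with cocycle pair $(k,l)$ and inverse cocycle pair $(k',l')$ for $\varphi^{-1}$, I define a map on generators by
\[
\alpha(x,1,\sigma_E(x)):=(\varphi(x),\,l(x)-k(x),\,\varphi(\sigma_E(x))),
\]
whose right-hand side lies in $\cG_E$ by \eqref{eqn:COE}. Since any $g=(x,n,y)\in\cG_E$ factors as a finite product of such generators and their inverses (choose $p,q\in\Nz$ with $\sigma_E^p(x)=\sigma_E^q(y)$ and $n=p-q$), $\alpha$ extends multiplicatively to a groupoid homomorphism; continuity follows from continuity of $\varphi,k,l$ on the basic bisections. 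The analogous construction using $(k',l')$ supplies the inverse of $\alpha$, and by construction $\alpha^0=\varphi$.

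For injectivity, suppose $\alpha^0=\id_{X_E}$. Then $\alpha(x,n,y)=(x,m,y)$ for some integer $m$. A direct calculation shows that whenever $(x,n_1,y),(x,n_2,y)\in\cG_E$ with $n_1\neq n_2$, the point $y$ must be eventually periodic. Hence $\alpha$ restricts to the identity on the subset of $\cG_E$ consisting of elements with source outside $X_E^{ep}$. Since $X_E^{ep}$ is countable while $X_E$ is a perfect totally disconnected compact metrizable space, $X_E\setminus X_E^{ep}$ is dense; and because $\s$ restricted to any basic bisection has open image in $X_E$, the set $\s^{-1}(X_E\setminus X_E^{ep})$ is dense in $\cG_E$ in the groupoid topology. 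Continuity of $\alpha$ then forces $\alpha=\id$.

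The main anticipated obstacle lies in the well-definedness step, where one must guarantee that the locally constant functions $k,l$ obtained from a cover of $\alpha(U_e)$ assemble into globally continuous functions satisfying the COE identity pointwise on $X_E$. The finite partition $X_E=\bigsqcup_{e}\cZ(e)$ together with the compact-open basis of $\cG_E$ reduces this to a bookkeeping argument, but care is needed to align the (finer) groupoid topology with the product topology on $X_E\times\Zz\times X_E$, and to use the latter only after fixing the cocycle values locally.
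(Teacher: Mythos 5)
The paper itself does not prove this proposition; it quotes it from the proof of \cite[Theorem~9.2.2]{MatBook} (going back to Matsumoto--Matui), and your argument is essentially a self-contained reconstruction of that standard proof rather than a new route. The forward direction (cover $\alpha(U_e)$ by finitely many basic bisections $\cZ(V_j,k_j,l_j,W_j)$, read off $\sigma_E^{k_j}(\alpha^0(x))=\sigma_E^{l_j}(\alpha^0(\sigma_E(x)))$, and glue the resulting locally constant functions over the clopen partition $X_E=\bigsqcup_e\cZ(e)$) is correct, modulo the routine refinement of the open cover of $\cZ(e)$ into a clopen partition, which is standard in a Stone space. The injectivity argument is also correct and complete: nontrivial isotropy forces an eventually periodic source, $X_E^{ep}$ is countable while $X_E$ is a Cantor set, $\s$ is open on basic bisections, so $\s^{-1}(X_E\setminus X_E^{ep})$ is dense and continuity finishes it.

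The step I would press you on is surjectivity, which is where the real content of the proposition lives and which you compress into two sentences. Three things need checking: (i) independence of the multiplicative extension from the choice of $(p,q)$ with $p-q=n$ and $\sigma_E^p(x)=\sigma_E^q(y)$ (a cancellation of the common tail; routine but not stated); (ii) continuity of the extension for the \'etale topology on $\cG_E$ --- here one must observe that the iterated cocycle sums are locally constant, so that locally the map carries a basic bisection into a basic bisection and is implemented by $\varphi$ under the range homeomorphisms (you flag this yourself, correctly, as the delicate point); and (iii) that the map $\beta$ built from $\varphi^{-1}$ and $(k',l')$ actually inverts $\alpha$. Point (iii) is not ``by construction'': on the isotropy group $\cong\Zz$ of an eventually periodic point, the composite $\beta\alpha$ could a priori act by multiplication by an integer other than $1$, and this is precisely the subtlety about eventually periodic points that required care in the Matsumoto--Matui correspondence. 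The clean fix is the density argument you already deploy for injectivity: $\beta\alpha$ is a continuous endomorphism of $\cG_E$ restricting to the identity on $\cG_E^{(0)}$, hence fixes every arrow whose source is not eventually periodic, hence equals the identity on the dense set $\s^{-1}(X_E\setminus X_E^{ep})$ and therefore everywhere. With (i)--(iii) spelled out, your proof is complete and matches the cited one.
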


We say that a COE $\varphi$ is \emph{inner} if it is equal to $\alpha^0$ for some inner automorphism $\alpha\in\Inn(\cG_E)$. 

\begin{proposition}
    Inner COEs act trivially on the set of all tail equivalence classes of $X^{ep}_E$. In particular, we have a canonical action $\Out(\cG_E) \acts [E^*_\prim]$.  
\end{proposition}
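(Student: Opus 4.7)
The plan is to read off the statement directly from the groupoid-theoretic characterization of inner automorphisms in Proposition~\ref{prop:inner2}, together with the explicit description of morphisms in $\cG_E$ and the tail-equivalence bijection recalled just before this proposition.

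First, I would pick an inner COE $\varphi$ and use Proposition~\ref{prop:Aut=COE} plus Proposition~\ref{prop:inner2} to produce a continuous map $b\colon X_E\to\cG_E$ satisfying $\s(b(x))=x$ and $\r(b(x))=\varphi(x)$ for every $x\in X_E$. Then I would unpack the definition of $\cG_E$: any element with source $x$ and range $\varphi(x)$ has the form $(\varphi(x),k-l,x)$ with $\sigma_E^k(\varphi(x))=\sigma_E^l(x)$ for some $k,l\in\Nz$. This is exactly the statement that $\varphi(x)$ lies in the tail equivalence class $[x]_\tail$. Consequently every inner COE $\varphi$ satisfies $\varphi(x)\in [x]_\tail$ for all $x\in X_E$, and in particular it fixes every tail equivalence class in $X_E^{ep}$.

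For the second assertion I would simply invoke the bijection recalled above the proposition,
\[
[E_\prim^*]\xrightarrow{\cong}\{[x]_\tail : x\in X_E^{ep}\},\qquad [p]\mapsto [p^\infty]_\tail,
\]
which is equivariant for the $\COE(X_E,\sigma_E)$-action on both sides. Since inner COEs act trivially on the right-hand side by the previous paragraph, they act trivially on $[E_\prim^*]$. Combined with Proposition~\ref{prop:Aut=COE}, which identifies $\Aut(\cG_E)$ with $\COE(X_E,\sigma_E)$ and sends $\Inn(\cG_E)$ to the subgroup of inner COEs, this shows that the canonical action $\COE(X_E,\sigma_E)\acts [E_\prim^*]$ descends through the quotient $\Aut(\cG_E)\twoheadrightarrow \Out(\cG_E)$, yielding the desired action.

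I do not anticipate any real obstacle: the hard work of characterizing inner automorphisms in a way that makes tail equivalence visible is already packaged into Proposition~\ref{prop:inner2}, and the remainder is a direct unpacking of the definition of $\cG_E$ together with the standard identification of tail-equivalence classes of eventually periodic points with $[E_\prim^*]$. The only subtle point to keep in mind is that morphisms in $\cG_E$ are precisely tail equivalences decorated with an integer, so the continuous lift $b$ of $\varphi$ automatically forces $\varphi(x)$ and $x$ to be tail equivalent for every $x$, not just for eventually periodic ones.
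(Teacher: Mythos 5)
Your proposal is correct and follows essentially the same route as the paper: both arguments reduce to the observation that the source and range of any morphism $(y,k-l,x)\in\cG_E$ are tail equivalent by definition, applied to the groupoid element joining $x$ to $\varphi(x)$ supplied by the global bisection (equivalently, by the map $b$ from Proposition~\ref{prop:inner2}). The paper phrases this via $\pi_U(x)=\r(g)$ for the unique $g\in U$ with $\s(g)=x$, but this is the same computation.
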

\begin{proof}
    For every $g \in \cG_E$ with $\s(g) \in X^{ep}_E$, $\s(g) \in X^{ep}_E$ and $\r(g) \in X^{ep}_E$ are tail equivalent by definition. For a global bisection $U \subseteq \cG_E$, we have $\pi_U(x) = \r(g)$, where $g \in U$ is the unique element with $\s(g)=x$. Hence, inner automorphisms of $\cG_E$ acts trivially on the classes of $X^{ep}_E$.  
\end{proof}

The following lemma is essentially well-known: 
\begin{lemma} \label{lem:existinner}
    If $x_1,x_2 \in X_E$ are tail equivalent, then there exists an inner COE $\varphi$ such that $\varphi(x_1)=x_2$. 
\end{lemma}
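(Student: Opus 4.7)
The plan is to exhibit an explicit element $V\in\F(\cG_E)$ such that the inner COE $\varphi\coloneqq\Ad(V)^0$ satisfies $\varphi(x_1)=x_2$. In the trivial case $x_1=x_2$ take $V=\cG_E^{(0)}$ and $\varphi=\id$, so assume $x_1\neq x_2$. The tail-equivalence hypothesis provides $n,m\geq 0$ with $\sigma_E^n(x_1)=\sigma_E^m(x_2)=:y$; writing $u\coloneqq(x_1)_{[0,n-1]}$ and $v\coloneqq(x_2)_{[0,m-1]}$, one has $x_1=uy$, $x_2=vy$, and the natural groupoid element is $g_0\coloneqq(x_2,m-n,x_1)\in\cZ(\cZ(v),m,n,\cZ(u))$. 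The naive idea is to use the bisection that interchanges $\cZ(u)$ with $\cZ(v)$ and keeps the complement fixed; this works provided $\cZ(u)\cap\cZ(v)=\emptyset$, which need not hold when $n\neq m$.

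To force the cylinders to be disjoint I would lengthen the prefixes by a common initial segment of the shared tail. Let $K\geq 0$ be the first coordinate where $x_1$ and $x_2$ disagree (which exists because $x_1\neq x_2$), and pick $N$ with $\min(n,m)+N>K$. Set $u'\coloneqq(x_1)_{[0,n+N-1]}=u\cdot y_{[0,N-1]}$ and $v'\coloneqq(x_2)_{[0,m+N-1]}=v\cdot y_{[0,N-1]}$. If $n=m$, the words $u'$ and $v'$ have equal length but differ; if $n\neq m$, the shorter of the two differs at position $K$ from the corresponding initial segment of the longer. In either case neither is a prefix of the other, so $\cZ(u')\cap\cZ(v')=\emptyset$.

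With disjointness secured I would then define
\begin{equation}
    V\coloneqq\cZ(\cZ(v'),m+N,n+N,\cZ(u'))\cup\cZ(\cZ(u'),n+N,m+N,\cZ(v'))\cup\bigl\{(x,0,x):x\in X_E\setminus(\cZ(u')\cup\cZ(v'))\bigr\}.
\end{equation}
Each of the three pieces is a clopen bisection of $\cG_E$, and their source (respectively range) images partition $X_E$ as $\cZ(u')\sqcup\cZ(v')\sqcup(X_E\setminus(\cZ(u')\cup\cZ(v')))$, so $V\in\F(\cG_E)$. The element $g_0=(x_2,m-n,x_1)$ lies in the first piece with the extended cylinders, whence $\varphi(x_1)=\mfr(g_0)=x_2$, as required. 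The main obstacle in the argument is the need to extend $u$ and $v$ to ensure $\cZ(u')\cap\cZ(v')=\emptyset$ when $n\neq m$, since the naive swap bisection is then blocked by cylinder containment; once this combinatorial issue is resolved, the remainder of the construction is essentially formal.
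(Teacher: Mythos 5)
Your proposal is correct and follows essentially the same route as the paper: write $x_1=uy$, $x_2=vy$, enlarge the prefixes until the two cylinder sets are disjoint, and then take the inner automorphism coming from the full bisection that swaps $\cZ(u')$ and $\cZ(v')$ and is the identity elsewhere. Your explicit verification of disjointness via the first coordinate of disagreement fills in a step the paper dispatches with ``by choosing $w_1$ and $w_2$ larger, if necessary.''
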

\begin{proof}
    Let $x_1,x_2 \in X_E$ be tail equivalent infinite paths. Then, we can choose $w_1,w_2 \in E^*$ and $y \in X_E$ such that $x_1=w_1y$ and $x_2=w_2y$. By choosing $w_1$ and $w_2$ larger, if necessary, we may assume that $\cZ(w_1) \cap \cZ(w_2) = \emptyset$. Define $\varphi \colon X_E \to X_E$ by 
    $\varphi(w_1z)=w_2z$ on $\cZ(w_1)$, $\varphi(w_2z)=w_1z$ on $\cZ(w_2)$, and $\varphi(z)=z$ on $X_E \setminus (\cZ(w_1)\cup \cZ(w_2))$. Then, we can see that $\varphi$ is an inner COE with $\varphi(x)=y$. Indeed, we can see that $\varphi=\Ad(U)^0$, where the bisection $U$ is given by
    \[ U = \{ (w_2z,|w_2|-|w_1|,w_1z) \mid s(z)=s(y)\} \cup 
    \{ (w_1z,|w_1|-|w_2|,w_2z) \mid s(z)=s(y)\} \cup 
    (X_E \setminus (\cZ(w_1)\cup \cZ(w_2)).
    \]
\end{proof}

Later, we shall prove that a COE is inner if and only if it preserves all tail equivalence classes of eventually periodic points (see Proposition~\ref{prop:preservesorbits->inner}).

\subsection{The action of $\Out(\cG)$ on groupoid cohomology}
Let us record the following basic observation for easy reference.
\begin{proposition}
\label{prop:actionsoncohom}
There is a group homomorphism $\Out(\cG)\to\Aut(\H^1(\cG))$ given $[\alpha]\mapsto \H^1(\alpha^{-1})$.
\end{proposition}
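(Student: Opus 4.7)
The plan is to verify two things: (1) the assignment $\alpha \mapsto \H^1(\alpha^{-1})$ defines a group homomorphism $\Aut(\cG) \to \Aut(\H^1(\cG))$, and (2) this homomorphism vanishes on $\Inn(\cG)$, so it descends to $\Out(\cG)$.

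For (1), I would simply unpack the definition of $\H^1(\alpha)$. Since $\H^1(\alpha)[\rho] = [\rho \circ \alpha]$, the assignment $\alpha \mapsto \H^1(\alpha)$ is contravariant, because
\[
\H^1(\alpha\beta)[\rho] = [\rho \circ \alpha \circ \beta] = \H^1(\beta)\,\H^1(\alpha)[\rho].
\]
Passing to inverses swaps the composition order, so $\H^1((\alpha\beta)^{-1}) = \H^1(\beta^{-1}\alpha^{-1}) = \H^1(\alpha^{-1})\,\H^1(\beta^{-1})$, which is precisely the relation for a covariant homomorphism.

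For (2), let $\alpha \in \Inn(\cG)$. By Proposition~\ref{prop:inner2}, there is a continuous open map $b \colon \cG^{(0)} \to \cG$ with $\s \circ b = \id$ and $\r \circ b = \alpha^0$. Writing $\alpha = \Ad(U)$ with $U = b(\cG^{(0)}) \in \F(\cG)$ and $b = \s_U^{-1}$, unpacking the definition of $\Ad(U)$ yields the key identity
\[
\alpha(g) = b(\r(g)) \cdot g \cdot b(\s(g))^{-1} \qquad \text{for all } g \in \cG.
\]
Given $\rho \in \Hom(\cG, \Zz)$, I would define $\xi \coloneqq \rho \circ b \in C(\cG^{(0)}, \Zz)$; this is locally constant since $b$ is continuous and $\rho$ takes values in the discrete group $\Zz$. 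Applying the homomorphism $\rho$ to the identity above gives
\[
\rho(\alpha(g)) - \rho(g) = \rho(b(\r(g))) - \rho(b(\s(g))) = \xi(\r(g)) - \xi(\s(g)) = (\partial\xi)(g),
\]
so $\rho \circ \alpha - \rho = \partial \xi$, i.e.\ $[\rho \circ \alpha] = [\rho]$ in $\H^1(\cG)$. Since $\alpha^{-1}$ is also inner (with the map $x \mapsto b((\alpha^0)^{-1}(x))^{-1}$), the same argument yields $\H^1(\alpha^{-1})[\rho] = [\rho]$, so inner automorphisms act trivially on $\H^1(\cG)$ and the map descends to $\Out(\cG)$.

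The only mild obstacle is establishing the conjugation formula $\alpha(g) = b(\r(g)) g b(\s(g))^{-1}$. This is essentially bookkeeping with the definition of $\Ad(U)$: with $b = \s_U^{-1}$, the element $b(\r(g))$ has source $\r(g)$ and $b(\s(g))^{-1}$ has range $\s(g)$, so the triple product is well-defined, has range $\r(b(\r(g))) = \alpha^0(\r(g))$ and source $\r(b(\s(g))) = \alpha^0(\s(g))$, matching $\alpha(g)$. Note that I only need existence of some such $b$; independence of the cohomology class $[\rho \circ \alpha]$ from the choice of $b$ follows automatically from the conclusion.
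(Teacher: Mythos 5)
Your proposal is correct and follows essentially the same route as the paper: the key computation $\rho\circ\alpha-\rho=\partial(\rho\circ(\s\vert_U)^{-1})$ is exactly the paper's argument, with your $b$ playing the role of $(\s\vert_U)^{-1}$. The only difference is that you spell out the (routine) covariance check and the remark that $\alpha^{-1}$ is again inner, both of which the paper leaves implicit.
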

\begin{proof}
We need to show that if $\alpha\in\Inn(\cG)$, then $\H^1(\alpha)=\id$. Let $\rho\in\Hom(\cG,\Zz)$, and write $\alpha=\Ad(U)$ for some $U\in\F(\cG)$. For $g\in\cG$, we have $\alpha(g)=Ug U^{-1}=(\s\vert_U)^{-1}(\r(g))g ((\s\vert_U)^{-1}(\s(g)))^{-1}$. Hence, 
\begin{equation*}
\rho\circ\alpha(g)=\rho((\s\vert_U)^{-1}(\r(g)))+\rho(g)-\rho((\s\vert_U)^{-1}(\s(g))),    
\end{equation*}
so that
\begin{align*}
(\rho\circ\alpha-\rho)(g)
=\rho\circ\alpha(g)-\rho(g)
=\rho((\s\vert_U)^{-1}(\r(g)))-\rho((\s\vert_U)^{-1}(\s(g)))
=\partial(\rho\circ(\s\vert_U)^{-1})(g)
\end{align*}
which implies that $[\rho\circ\alpha]=[\rho]$ in $\H^1(\cG)$.
\end{proof}

\subsection{Outer automorphism groups of topological full groups}

Each $\alpha\in\Aut(\cG)$ gives rise to a group automorphism $\alpha_{\F}\in\Aut(\F(\cG))$ satisfying $\alpha_{\F}(U)\coloneqq\alpha(U)$ for all $U\in\F(\cG)$, and the map $\Aut(\cG)\to\Aut(\F(\cG))$ given by $\alpha\mapsto\alpha_{\F}$ is clearly a group homomorphism. The next proposition is essentially due to Matui \cite[Section~3]{Mat15}. 
\begin{proposition} \label{prop:aut}
 Assume $\cG^{(0)}$ is a Cantor set. 
 Then, 
 \begin{enumerate}
     \item the homomorphism $\Aut(\cG)\to\Aut(\F(\cG))$, $\alpha\mapsto\alpha_{\F}$, is an isomorphism;
     \item the map from (i) descends to a group isomorphism $\Out(\cG)\cong\Out(\F(\cG))$.
 \end{enumerate}
\end{proposition}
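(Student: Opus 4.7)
The plan is to deduce both parts directly from Matui's reconstruction theorem for topological full groups \cite[Section~3]{Mat15}, which, under our standing hypotheses (ample minimal effective Hausdorff groupoid $\cG$ with Cantor unit space), asserts that every group isomorphism between the topological full groups of two such groupoids extends uniquely to an isomorphism of the underlying groupoids. In particular, taking both groupoids equal to $\cG$, every $\Phi \in \Aut(\F(\cG))$ admits a unique extension to some $\alpha \in \Aut(\cG)$, so that $\alpha_{\F} = \Phi$. This is precisely the assertion of part (i): existence of $\alpha$ gives surjectivity of the homomorphism $\alpha \mapsto \alpha_{\F}$, while uniqueness gives injectivity, since the identity on $\cG$ trivially extends the identity on $\F(\cG)$, so any $\alpha \in \Aut(\cG)$ with $\alpha_{\F} = \id$ must equal $\id$.

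For part (ii), I would first verify directly from the definition that for any $U \in \F(\cG)$ and any $V \in \F(\cG)$, the inner automorphism $\Ad(U)_{\F}$ of $\F(\cG)$ is nothing but group-theoretic conjugation, $\Ad(U)_{\F}(V) = U V U^{-1}$; this is a two-line calculation from the formula $\Ad(U)(g) = UgU^{-1}$ on $\cG$, since multiplication of bisections in $\F(\cG)$ is pointwise groupoid multiplication. It follows that the isomorphism from (i) carries $\Inn(\cG)$ onto the inner automorphism subgroup of $\F(\cG)$. Conversely, if $\alpha \in \Aut(\cG)$ satisfies $\alpha_{\F}(V) = U V U^{-1}$ for some $U \in \F(\cG)$ and all $V \in \F(\cG)$, then $(\alpha \circ \Ad(U)^{-1})_{\F} = \id_{\F(\cG)}$, and the injectivity from part (i) gives $\alpha = \Ad(U) \in \Inn(\cG)$.

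Combining these observations, the isomorphism of (i) descends to the claimed isomorphism $\Out(\cG) \cong \Out(\F(\cG))$. The main obstacle in the plan is the invocation of Matui's reconstruction theorem itself, which is a deep rigidity result; once it is granted in our setting (for which Matui's hypotheses are satisfied, with the minimality, effectiveness, ampleness, and Cantor structure of $\cG^{(0)}$ providing exactly the inputs Matui requires), the remainder of the argument is purely formal and amounts to unwinding the definitions of $\Ad$, $\alpha_{\F}$, and the inner automorphism subgroups.
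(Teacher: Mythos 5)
Your proposal is correct and follows essentially the same route as the paper: both reduce part (i) to Matui's reconstruction results in \cite[Section~3]{Mat15} (spatial realisation plus the upgrade to a groupoid isomorphism) and then deduce (ii) formally from the observation that $\Ad(U)_{\F}$ is conjugation by $U$ in $\F(\cG)$. The only cosmetic difference is that the paper proves injectivity directly, by noting that the sets $T(g)=\{U\in\F(\cG) : U^2=\cG^{(0)},\, g\in U\setminus\cG^{(0)}\}$ form a neighbourhood basis of $g$ so that $\alpha$ is recovered from $\alpha_{\F}$ via $\alpha(g)=\bigcap_{U\in T(g)}\alpha_{\F}(U)$, whereas you fold this into the uniqueness clause of the cited reconstruction theorem; since that uniqueness is itself established by essentially the same basis argument, this is a presentational rather than a substantive difference.
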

\begin{proof}
(ii) easily follows from (i), so it suffices to prove (i). 
For every $g \in \cG$, the set 
\[ T(g) = \{ U \in \F(\cG) \mid U^2 = \cG^{(0)}, U \setminus \cG^{(0)} \ni g\}
\]
forms a basis of open neighbourhoods of $g$. Since we have 
\[ \alpha(g) = \alpha\left( \bigcap_{U \in T(g)}U\right) = \bigcap_{U \in T(g)} \alpha_{\F}(U) \]
for every $\alpha \in \Aut(\cG)$, the map $\alpha \mapsto \alpha_{\F}$ is injective. Let $\Phi \in \Aut(\F(\cG))$. By \cite[Theorem~3.5]{Mat15}, there exists $\varphi \in \Homeo(\cG^{(0)})$ such that $\pi_V = \varphi\circ\pi_U\circ\varphi^{-1}$, where $V=\Phi(U)$, for all $U \in \F(\cG)$. 
By the proof of \cite[Proposition~3.8]{Mat15}, there exists a unique $\alpha \in \Aut(\cG)$ such that $\pi_{\alpha(U)} = \varphi\circ\pi_U\circ\varphi^{-1}$ for all $U \in \F(\cG)$. Then, we have $\alpha_{\F} = \Phi$ by definition, which completes the proof of surjectivity. 
\end{proof}

\section{Projectivised measures and marker COEs}
In this section, we define a space analogous to the projectivised current space of a free group \cite{Kap06, Uyanik14}, and we introduce the concept of a marker COE, inspired by marker automorphisms of shifts of finite type.

\subsection{The space of projectivised measures} 
\label{ssec:PM+}

Let us now define what will be our boundary action.
\begin{definition}
\label{def:PM+}
    We let \begin{equation*}
        \Pz M(X_E,\sigma_E)_+\coloneqq (M(X_E,\sigma_E)_+\setminus\{0\})/\sim,
    \end{equation*}
where $\mu_1\sim\mu_2$ if there exists $r\in\Rz_{>0}$ such that $\mu_1=r\mu_2$. 
\end{definition}
For $\mu\in M(X_E,\sigma_E)_+$, we let $[\mu]$ denote the class of $\mu$ in $\Pz M(X_E,\sigma_E)_+$.
Note that $\Pz M(X_E,\sigma_E)_+$ is homeomorphic to $P(X_E,\sigma_E)$. 
In particular, $\Pz M(X_E,\sigma_E)_+$ is a compact Hausdorff space by Alaoglu's theorem. 

\begin{definition}
    The actions $\Out(\cG_E)\acts M(X_E,\sigma_E)_+$ and $\Out(\cG_E)\acts \Pz M(X_E,\sigma_E)_+$ are defined as the ones induced from the action $\Out(\cG_E)\acts (\H^E,\H^E_+)$ by order isomorphisms arising from \cite[Theorem~9.3.11]{MatBook}.
\end{definition}
For $\alpha \in \Aut(\cG_E)$ and $\mu \in M(X_E,\sigma_E)_+$, the actions are denoted by $\alpha(\mu) \in M(X_E,\sigma_E)_+$ and $\alpha[\mu] \in \Pz M(X_E,\sigma_E)_+$. We use the same notations for $\varphi \in \COE(X_E,\sigma_E)$. The actions are  concretely given by 
\[ \varphi(\mu) := \mu \circ \Psi_\varphi \text{ and } \varphi[\mu] := [\mu \circ \Psi_\varphi]\]
for $\varphi \in \COE(X_E,\sigma_E)$ and $\mu \in M(X_E,\sigma_E)_+$,. 

\begin{remark}
    The action $\Out(\cG_E) \acts M(X_E,\sigma_E)_+$ is completely different from the push-forward of measures. Indeed, for $\varphi \in \COE(X_E,\sigma_E)$, $\Psi_\varphi\colon M(X_E,\sigma_E)_+\to M(X_E,\sigma_E)_+$ does not preserve $P(X_E,\sigma_E)$ unless $\varphi$ is a strong COE (see \cite[Theorem~7.2]{MM16}). 
\end{remark}

\begin{remark} \label{rem:EF}  
    Let $E,F$ be strongly connected graphs which are not subdivided circles. 
    A COE $X_E \to X_F$ also induces the homeomorphisms $M(X_E,\sigma_E)_+ \to M(X_F,\sigma_F)_+$ and $\Pz M(X_E,\sigma_E)_+ \to \Pz M(X_F,\sigma_F)_+$ in the same way. 
\end{remark}

Our aim now is to prove that $\Out(\cG_E)\acts \Pz M(X_E,\sigma_E)_+$ is a topologically free boundary action. Our first observation is that the projective measure space is a compactification of $[E^*_\prim]$. 
This is a consequence of \cite[Theorem~1]{Sig74}. 

\begin{proposition}   
\label{prop:periodicmeasures}
    If $E$ is strongly connected, then $\{[\eta_{[p]}] : [p]\in [E_\prim^*]\}$ is dense in $\Pz M(X_E,\sigma_E)_+$.
\end{proposition}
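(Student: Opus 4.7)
The plan is to reduce the claim to density of periodic orbit measures in $P(X_E,\sigma_E)$ and then to invoke Sigmund's theorem \cite[Theorem~1]{Sig74}. First, I would observe that $[\mu]\mapsto \mu/\mu(X_E)$ yields a homeomorphism $\Pz M(X_E,\sigma_E)_+\cong P(X_E,\sigma_E)$, under which $[\eta_{[p]}]$ is sent to the unique shift-invariant probability measure supported on the periodic orbit of $p^\infty$. Moreover, the assignment $[p]\mapsto [\eta_{[p]}]$ identifies $[E^*_\prim]$ with the set of periodic orbit measures in $P(X_E,\sigma_E)$, since any $\sigma_E$-periodic point has the form $q^\infty$ for a primitive cycle $q$ uniquely determined up to cyclic permutation.

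Second, as recalled in Section~\ref{ss:cohomology}, the projection $\pi_E\colon \ol X_E\to X_E$ induces a homeomorphism $P(\ol X_E,\bar\sigma_E)\cong P(X_E,\sigma_E)$ carrying periodic orbit measures to periodic orbit measures, so it suffices to show that periodic orbit measures are dense in $P(\ol X_E,\bar\sigma_E)$. When $E$ is primitive, $(\ol X_E,\bar\sigma_E)$ is a mixing two-sided SFT and hence satisfies Bowen's specification property, so this density is precisely the conclusion of \cite[Theorem~1]{Sig74}.

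For general strongly connected $E$ of period $d\geq 1$, I would reduce to the mixing case via the periodic decomposition $\ol X_E=\bigsqcup_{i=0}^{d-1}\ol X_i$ (with indices taken modulo $d$), where $\bar\sigma_E(\ol X_i)=\ol X_{i+1}$ and $(\ol X_0,\bar\sigma_E^d)$ is a mixing SFT. Any $\bar\sigma_E$-invariant probability measure $\mu$ decomposes uniquely as $\mu=\frac{1}{d}\sum_{i=0}^{d-1}(\bar\sigma_E^i)_*\mu_0$ for a $\bar\sigma_E^d$-invariant probability measure $\mu_0=d\mu|_{\ol X_0}$. Sigmund's theorem applied to $(\ol X_0,\bar\sigma_E^d)$ approximates $\mu_0$ by $\bar\sigma_E^d$-periodic orbit measures, and averaging under the decomposition produces $\bar\sigma_E$-periodic orbit measures converging to $\mu$.

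The main obstacle is essentially book-keeping: Sigmund does the heavy lifting in the mixing case, and the periodic decomposition reduces the strongly connected case to it. The only point requiring any care is to check that the resulting approximations correspond to genuine primitive cycles of $E$, but this is handled by the bijection in the first paragraph together with the observation that every $\bar\sigma_E$-periodic orbit in $\ol X_E$ is the $\pi_E$-preimage of a one-sided periodic orbit arising from some primitive cycle of $E$.
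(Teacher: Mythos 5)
Your proposal is correct and follows essentially the same route as the paper's proof: normalise to $P(X_E,\sigma_E)$, identify it with $P(\ol{X}_E,\bar{\sigma}_E)$ via $\pi_E$, reduce to the primitive (mixing) case through the periodic decomposition, and invoke Sigmund's theorem. The only difference is the order of the two reductions (the paper passes to the primitive component on the one-sided shift before going two-sided, while you go two-sided first), which is immaterial.
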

\begin{proof}
We first claim that we may assume $E$ is primitive. 
Let $E^0=E^0_0\sqcup\cdots \sqcup E^0_{d-1}$ be the periodic decomposition of $E$ and let $X_E = X_0 \sqcup\cdots \sqcup X_{d-1}$ be the associated decomposition.  
Then, there is an isomorphism of simplices of probability measures $P(X_E,\sigma_E)\cong P(X_0,\sigma_E^d) = P(X_{E_0}, \sigma_{E_0})$. Indeed, for $\mu\in P(X_E,\sigma_E)$, the measure $d\mu\vert_{X_0}$ lies in $P(X_{E_0},\sigma_{E_0})$, and the map $\mu\mapsto d\mu\vert_{X_0}$ is invertible with inverse $\nu\mapsto \frac{1}{d}\sum_{i=0}^{d-1}(\sigma_E^i)_*\nu$. This bijection is an isomorphism of simplices that identifies the sets of periodic measures. Thus, we may assume that $E$ is primitive. 

As observed in Section~\ref{ss:cohomology}, the canonical projection map $\pi_E\colon \ol{X}_E\to X_E$ induces an isomorphism of simplices $P(\ol{X}_E,\ol{\sigma}_E)\cong P(X_E,\sigma_E)$. Clearly this map identifies the set of probability measures on $\ol{X}_E$ coming from finite $\ol{\sigma}_E$-orbits with the set $\{\frac{1}{|p|}\eta_{[p]} : [p]\in [E_\prim^*]\}$. Thus, it suffices to show that the probability measures on $\ol{X}_E$ coming from finite $\ol{\sigma}_E$-orbits are dense in the set of all shift-invariant probability measures on $\ol{X}_E$.
It is known that two-sided SFTs for primitive graphs have specification, so the result follows from \cite{Sig74}.
\end{proof}

\begin{proposition}
    \label{prop:varphieta[p]}
Let $E,F$ be strongly connected graphs which are not subdivided circles. 
Let $\alpha \colon \cG_E \to \cG_F$ be an isomorphism of groupoids. For every primitive cycle $p\in E_\prim^*$, we have 
\begin{equation}
\alpha(\eta_{[p]})=\eta_{\alpha[p]}.   
\end{equation}
\end{proposition}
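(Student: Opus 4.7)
My plan is to translate the identity to the level of groupoid cohomology and reduce it to a simple statement about generators of isotropy subgroups. Under the isomorphism $\Phi_E\colon\H^1(\cG_E)\overset{\cong}{\to}\H^E$, the functional $\eta_{[p]}$ admits a one-line description: for every $\rho\in\Hom(\cG_E,\Zz)$, multiplicativity of $\rho$ telescopes the orbit sum into
\begin{equation*}
\eta_{[p]}\bigl([\Phi_E(\rho)]\bigr)=\sum_{n=0}^{|p|-1}\rho\bigl((\sigma_E^n p^\infty,1,\sigma_E^{n+1}p^\infty)\bigr)=\rho(\gamma_p),
\end{equation*}
where $\gamma_p\coloneqq (p^\infty,|p|,p^\infty)\in\cG_E$ is the product of the $|p|$ elementary shift arrows around the orbit. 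Since $p$ is primitive, $\gamma_p$ generates the (infinite cyclic) isotropy group of $\cG_E$ at $p^\infty$. Combining this with the formula $\alpha(\mu)=\mu\circ\Psi_\varphi$ and $\ol{\Psi}_\varphi=\Phi_E\circ\H^1(\alpha)\circ\Phi_F^{-1}$, the statement reduces to proving
\begin{equation*}
\tilde\rho(\alpha(\gamma_p))=\tilde\rho(\gamma_q)\qquad\text{for every }\tilde\rho\in\Hom(\cG_F,\Zz),
\end{equation*}
where $q\in F^*_\prim$ represents $\alpha[p]$ and $\gamma_q\coloneqq(q^\infty,|q|,q^\infty)$.

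Next I would identify $\alpha(\gamma_p)$ explicitly. Let $y_0\coloneqq\alpha^0(p^\infty)$, which lies in $[q^\infty]_\tail$ by definition of $\alpha[p]$. Because $\alpha$ is a groupoid isomorphism, it restricts to an isomorphism of isotropy groups, and primitivity of $q$ forces the isotropy of $\cG_F$ at $y_0$ to be infinite cyclic generated by $(y_0,|q|,y_0)$; thus $\alpha(\gamma_p)=\varepsilon(y_0,|q|,y_0)$ with $\varepsilon\in\{\pm 1\}$. To fix the sign I would test against the shift cocycle $\rho_0\in\Hom(\cG_F,\Zz)$, $\rho_0((x,n,y))\coloneqq n$. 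Since $\Phi_F(\rho_0)=1_{X_F}\in\H^F_+$ and the action preserves the positive cone, Step~1 gives $\varepsilon|q|=\rho_0(\alpha(\gamma_p))=\alpha(\eta_{[p]})\bigl([\Phi_F(\rho_0)]\bigr)\geq 0$, so $\varepsilon=+1$ and $\alpha(\gamma_p)=(y_0,|q|,y_0)$.

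To finish, pick any $g\in\cG_F$ with $\s(g)=q^\infty$ and $\r(g)=y_0$ (take $g=(y_0,|u_0|,q^\infty)$ for any writing $y_0=u_0 q^\infty$). A direct computation gives $g\gamma_q g^{-1}=(y_0,|q|,y_0)=\alpha(\gamma_p)$, and since $\Zz$ is abelian, any $\tilde\rho\in\Hom(\cG_F,\Zz)$ is invariant under conjugation, yielding $\tilde\rho(\alpha(\gamma_p))=\tilde\rho(\gamma_q)$. Applying Step~1 to $(\cG_F,q)$ concludes $\alpha(\eta_{[p]})=\eta_{\alpha[p]}$.

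The main anticipated obstacle is the sign determination $\varepsilon=+1$, ruling out that $\alpha$ inverts the orientation of the isotropy $\Zz$ at $y_0$. The only leverage is the order-preservation of the induced map on $\H^E$, and the shift cocycle $\rho_0$ with $\Phi_F(\rho_0)=1_{X_F}$ is the natural test to convert this into a concrete inequality. A minor side matter is ensuring the isotropy at $y_0$ really is generated by $(y_0,|q|,y_0)$ (and not a subgroup), which follows from primitivity of $q$ by a short case check on whether the prefix $u_0$ of $y_0$ introduces spurious identifications.
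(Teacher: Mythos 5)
Your proposal is correct, and its skeleton coincides with the paper's proof: both reduce $\eta_{[p]}(\Phi_E(\rho))$ by telescoping to $\rho(\gamma_p)$ with $\gamma_p=(p^\infty,|p|,p^\infty)$, identify $\alpha(\gamma_p)$ as $(\alpha^0(p^\infty),|q|,\alpha^0(p^\infty))$, and conclude via conjugation-invariance of $\Zz$-valued groupoid homomorphisms. The one genuine difference is how the middle step is justified. The paper simply cites \cite[Remark~4.6 and Proposition~4.8]{CEOR19} for the identity $\alpha(p^\infty,|p|,p^\infty)=(wq^\infty,|q|,wq^\infty)$, whereas you prove it from scratch: primitivity of $p$ and $q$ gives that the isotropy groups at $p^\infty$ and at $y_0=\alpha^0(p^\infty)$ are infinite cyclic with generators $\gamma_p$ and $(y_0,|q|,y_0)$ (the Fine--Wilf-type check that the prefix $u_0$ creates no smaller period is exactly the point to verify, and it works), so $\alpha(\gamma_p)=(y_0,\varepsilon|q|,y_0)$ with $\varepsilon=\pm1$; and the sign is pinned down by testing against the canonical cocycle $\rho_0(x,n,y)=n$, using that $\alpha(\eta_{[p]})$ is a positive functional because the action is defined through order isomorphisms of $(\H^F,\H^F_+)$. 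This buys a self-contained argument that makes visible where positivity of the ordered cohomology is actually used, at the cost of a slightly longer proof; the paper's route outsources precisely this sign/generator analysis to the cited result.
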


\begin{proof}
The map $\alpha \colon M(X_E,\sigma_E)_+ \to M(X_F,\sigma_F)_+$ is characterised as follows: for $\mu\in M(X_E,\sigma_E)_+$, we have 
\begin{equation}
\alpha(\mu)
=\mu\circ (\Phi_E\circ \H^1(\alpha)\circ \Phi^{-1}_F),
\end{equation}
where $\Phi_E\colon \H^1(\cG_E)\to \H^E$ is the isomorphism from Section~\ref{ss:cohomology}.
Hence,
\begin{equation}
\alpha(\mu)(\Phi_F(\rho))=\int_{X_E}\Phi_E(\rho\circ\alpha)\,d\mu
=\int_{X_E}(\rho\circ\alpha)(x,1,\sigma_E(x))\,d\mu
\end{equation}
for all $\rho\in\Hom(\cG_F,\Zz)$. Let $p\in E_\prim^*$, and take $q\in F_\prim^*$ with $\alpha[p]=[q]$. Then, we can write $\alpha^{0}(p^\infty)=wq^\infty$ for some $w\in E^*$. By \cite[Remark~4.6 and Proposition~4.8]{CEOR19}, we have $\alpha(p^\infty,|p|,p^\infty)=(wq^\infty,|q|,wq^\infty)$. Hence,
\begin{align}
  \alpha(\eta_{[p]})(\Phi_F(\rho))&=\sum_{i=0}^{|p|-1}(\rho\circ\alpha)(\sigma_E^i(p^\infty),1,\sigma_E^{i+1}(p^\infty))\\
  &=\rho\left(\alpha(p^\infty,|p|,p^\infty)\right)=\rho(wq^\infty,|q|,wq^\infty)=\rho(q^\infty,|q|,q^\infty)=\eta_{[q]}(\Phi_F(\rho))
\end{align}
for all $\rho\in\Hom(\cG_F,\Zz)$. Here, we used that $(q^\infty,|q|,q^\infty)$ and $(wq^\infty,|q|,wq^\infty)$ are conjugate in $\cG_E$, so that $\rho(wq^\infty,|q|,wq^\infty)=\rho(q^\infty,|q|,q^\infty)$.
We have shown that $\alpha(\eta_{[p]})=\eta_{[q]}$.
\end{proof}

We establish basic properties of the topology of the projective measure space. They will be frequently used in Section~\ref{ssec:proximal}. 
\begin{definition}
For $w\in E^*$ and $\mu\in M(X_E,\sigma_E)_+$, we let 
\[\langle w,\mu\rangle\coloneqq \mu(\cZ(w)). \] 
For $[\mu]\in \Pz M(X_E,\sigma_E)_+$, $\varepsilon > 0$, and $R \in \Zz_{>0}$, let \begin{equation*}
U([\mu],\varepsilon, R)\coloneqq\left\{[\nu]\in \Pz M(X_E,\sigma_E)_+ : \left| \frac{\langle w,\nu\rangle}{\|\nu\|} - \frac{\langle w,\mu\rangle}{\|\mu\|}
    \right| < \varepsilon \text{ for all }w \in E^* \text{ with }|w| \leq R\right\}.
\end{equation*} 
\end{definition}

\begin{lemma} 
\label{lem:current-topology}
For every $[\mu]\in \Pz M(X_E,\sigma_E)_+$, the family $\{U([\mu],\varepsilon, R)\}_{\varepsilon,R}$ is a neighbourhood basis of $[\mu]$. Moreover, there is a continuous embedding $\Pz M(X_E,\sigma_E)_+\hookrightarrow M(X_E,\sigma_E)_+$ given by $[\nu]\mapsto \nu/||\nu||$.
\end{lemma}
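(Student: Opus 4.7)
The plan is to identify $\Pz M(X_E,\sigma_E)_+$ explicitly with the simplex $P(X_E,\sigma_E)$ equipped with the weak-$*$ topology, and then translate the standard weak-$*$ neighbourhood basis into the language of cylinder sets.

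First I would construct the embedding. The norm map $\nu \mapsto \|\nu\| = \nu(X_E)$ is weak-$*$ continuous on $M(X_E,\sigma_E)_+$, being evaluation at the constant function $1 \in C(X_E,\Cz)$; consequently the normalisation
\[
n \colon M(X_E,\sigma_E)_+\setminus\{0\} \to P(X_E,\sigma_E),\quad \nu \mapsto \nu/\|\nu\|,
\]
is continuous. Since $n$ is constant on the equivalence classes defining $\Pz M(X_E,\sigma_E)_+$, it descends to a continuous map $\bar n \colon \Pz M(X_E,\sigma_E)_+ \to P(X_E,\sigma_E)$. Composing the inclusion $P(X_E,\sigma_E) \hookrightarrow M(X_E,\sigma_E)_+\setminus\{0\}$ with the quotient map produces a continuous two-sided inverse to $\bar n$, so $\bar n$ is a homeomorphism. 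Composing $\bar n$ with the inclusion $P(X_E,\sigma_E) \hookrightarrow M(X_E,\sigma_E)_+$ then yields the desired continuous embedding $[\nu]\mapsto \nu/\|\nu\|$.

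Using this identification, and writing $\tilde\mu = \mu/\|\mu\|$, the neighbourhood-basis claim is equivalent to the statement that the sets
\[
V([\mu],\varepsilon,R) \coloneqq \{\tilde\nu \in P(X_E,\sigma_E) : |\tilde\nu(\cZ(w)) - \tilde\mu(\cZ(w))| < \varepsilon \text{ for all } |w|\leq R\}
\]
form a weak-$*$ neighbourhood basis of $\tilde\mu$. Each $V([\mu],\varepsilon,R)$ is a finite intersection of basic weak-$*$ neighbourhoods, since every $\cZ(w)$ is clopen so $\mathbf{1}_{\cZ(w)} \in C(X_E,\Cz)$; hence it is open. For the converse I would invoke the Stone--Weierstrass theorem: the linear span $\cA$ of $\{\mathbf{1}_{\cZ(w)} : w \in E^*\}$ is a unital $*$-subalgebra of $C(X_E,\Cz)$ that separates points of $X_E$, because the cylinder sets form a basis of clopen sets for the totally disconnected space $X_E$. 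Therefore $\cA$ is uniformly dense in $C(X_E,\Cz)$. Given a basic weak-$*$ neighbourhood $N$ of $\tilde\mu$ determined by functions $f_1,\dots,f_n \in C(X_E,\Cz)$ and tolerance $\delta > 0$, a standard three-$\delta$ approximation, using that $\|\tilde\nu\| = 1$ uniformly for $\tilde\nu \in P(X_E,\sigma_E)$, produces an $R$ and an $\varepsilon'$ with $V([\mu],\varepsilon',R) \subseteq N$.

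I do not anticipate a substantial obstacle. The one mild subtlety to watch is not to presuppose Hausdorffness of the quotient topology on $\Pz M(X_E,\sigma_E)_+$; this is bypassed by exhibiting the explicit continuous inverse $\bar n$, which simultaneously identifies the quotient topology with the weak-$*$ topology on $P(X_E,\sigma_E)$.
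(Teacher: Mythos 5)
Your proposal is correct and follows essentially the same route as the paper: identify $\Pz M(X_E,\sigma_E)_+$ with $P(X_E,\sigma_E)$ via normalisation, and use density of the span of cylinder-set indicator functions in $C(X_E,\Cz)$ to convert the standard weak-$*$ neighbourhood basis into the sets $U([\mu],\varepsilon,R)$. The paper's proof is just a terser version of your argument, leaving the Stone--Weierstrass and three-$\varepsilon$ details implicit.
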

\begin{proof}
    The continuous map $[\nu]\mapsto \nu/||\nu||$ is a homeomorphism onto its range $P(X_E,\sigma_E)$. Since characteristic functions on cylinder sets span a dense subspace of $C(X_E, \Cz)$, the open sets 
    \begin{equation*}
        V(\mu,\varepsilon, R)\coloneqq\left\{\nu\in M(X_E,\sigma_E)_+ : \left| \langle w,\nu\rangle - \langle w,\mu \rangle \right| < \varepsilon \text{ for all }w \in E^* \text{ with }|w| \leq R\right\}
    \end{equation*} 
    for $\varepsilon>0$ and $R>0$ 
    form a neighbourhood basis of $\mu \in P(X_E,\sigma_E)$ by definition of weak*-topology. 
    Hence, the claim follows.
\end{proof}

\begin{lemma} \label{lem:subword}
    Let $\mu \in M(X_E,\sigma_E)_+$ and let $v,w \in E^*$. If $w$ contains a $v$-segment, then 
    \[ \langle w ,\mu \rangle \leq \langle v,\mu \rangle\]
\end{lemma}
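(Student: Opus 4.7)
The plan is to unfold the definition of the pairing $\langle\cdot,\mu\rangle$ in terms of cylinder sets and then appeal directly to the shift-invariance of $\mu$.

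Suppose the $v$-segment occurs at positions $[i,i+|v|-1]$ in $w$, so that $w_{[i,i+|v|-1]}=v$ with $0\le i$ and $i+|v|-1\le |w|-1$. First I would observe that if $x\in \cZ(w)$, then in particular $x_{[i,i+|v|-1]}=v$, which is exactly the statement that $\sigma_E^{i}(x)\in \cZ(v)$. Hence
\begin{equation*}
\cZ(w)\;\subseteq\;\sigma_E^{-i}(\cZ(v)).
\end{equation*}

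Taking $\mu$-measure of both sides and using that $\mu$ is shift-invariant (so $\mu(\sigma_E^{-i}(A))=\mu(A)$ for every Borel $A$ and every $i\ge 0$), I obtain
\begin{equation*}
\langle w,\mu\rangle=\mu(\cZ(w))\;\le\;\mu\bigl(\sigma_E^{-i}(\cZ(v))\bigr)=\mu(\cZ(v))=\langle v,\mu\rangle,
\end{equation*}
which is the claim. There is no real obstacle here; the only point worth flagging is that shift-invariance is used in the form $\mu\circ\sigma_E^{-1}=\mu$, which is precisely the definition of $M(X_E,\sigma_E)_+$ recalled in Section~\ref{ss:cohomology}.
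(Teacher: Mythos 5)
Your proof is correct and is essentially the paper's argument: the paper writes $w=w_1vw_2$ and expresses $\langle v,\mu\rangle$ as $\sum_{w'\in E^n,\, r(w')=s(v)}\langle w'v,\mu\rangle$, which is just the shift-invariance identity $\mu(\cZ(v))=\mu(\sigma_E^{-n}(\cZ(v)))$ with the preimage decomposed into cylinders, followed by the same monotonicity step you use via the inclusion $\cZ(w)\subseteq\sigma_E^{-i}(\cZ(v))$. No gaps.
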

\begin{proof}
    Write $w=w_1vw_2$ for $w_1,w_2 \in E^*$, and let $n=|w_1|$. By shift-invariance of $\mu$, we have 
    \[ \langle v,\mu \rangle = \sum_{w' \in E^n, r(w')=s(v)} \langle w'v, \mu\rangle \geq \langle w_1v,\mu \rangle
    \geq \langle w_1vw_2,\mu \rangle. \]
\end{proof}

\begin{definition} \label{def:occurrence}
    Let $v\in E^*$, $w \in E^*_\prim$,
    and write 
    $[w] = \{w(1), \dots,w(n)\}$.   
    Let $\langle v,[w]\rangle$ denote the number of $k \in \{1,\dots,n\}$ such that $w(k)$ begins with $v$. 
\end{definition}
Note that $\langle v,[w]\rangle$ is nearly equal to the number of $v$-segments in $w$, but another $v$-segment can be found by taking cyclic permutations. 
The next lemma is immediate from the definition:

\begin{lemma} \label{lem:rational-current}
    Let $v\in E^*$, $w \in E^*_\prim$. Suppose $|v| \leq |w|$. Then, 
    we have 
    \[ \frac{\langle v,\eta_{[w]} \rangle}{\| \eta_{[w]}\|} = \frac{\langle v,[w]\rangle}{|w|}.  \]
\end{lemma}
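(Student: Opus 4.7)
The plan is to unfold the definitions on both sides and check that they match, using primitivity to ensure the cyclic shifts are distinct.

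First, I would write out $\eta_{[w]}=\sum_{i=0}^{|w|-1}\delta_{\sigma_E^i(w^\infty)}$, so that
\begin{equation}
\langle v,\eta_{[w]}\rangle
=\sum_{i=0}^{|w|-1}\delta_{\sigma_E^i(w^\infty)}(\cZ(v))
=\#\bigl\{i\in\{0,\dots,|w|-1\}:\sigma_E^i(w^\infty)\in\cZ(v)\bigr\}.
\end{equation}
In particular, taking $v$ to be any edge in $w$ (or summing over $v$ an edge) one also sees $\|\eta_{[w]}\|=|w|$, which handles the denominator.

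Next I would identify each shifted sequence $\sigma_E^i(w^\infty)$ with an infinite periodic word built from a cyclic rotation of $w$. Writing $w^{(i)}\in E^{|w|}$ for the cyclic rotation $w_{[i]}w_{[i+1]}\cdots w_{[|w|-1]}w_{[0]}\cdots w_{[i-1]}$, one has $\sigma_E^i(w^\infty)=(w^{(i)})^\infty$. Since $|v|\leq|w|=|w^{(i)}|$, the condition $\sigma_E^i(w^\infty)\in\cZ(v)$ is equivalent to $w^{(i)}$ beginning with $v$.

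The last step is to match $\#\{i:w^{(i)}\text{ begins with }v\}$ with $\langle v,[w]\rangle$. By Proposition~\ref{prop:general-cyclic}, primitivity of $w$ gives $|[w]|=|w|$, so the map $i\mapsto w^{(i)}$ is a bijection from $\{0,\dots,|w|-1\}$ onto $[w]$. Thus the cardinality on the left equals the number of elements of $[w]$ beginning with $v$, which is exactly $\langle v,[w]\rangle$ by Definition~\ref{def:occurrence}. Combining with $\|\eta_{[w]}\|=|w|$ yields the stated identity.

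There is no real obstacle here; the only conceptual point is the use of primitivity to make the $|w|$ cyclic rotations distinct so that they enumerate $[w]$ without repetition, which is exactly what Proposition~\ref{prop:general-cyclic} supplies.
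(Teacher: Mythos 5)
Your argument is correct and is precisely the unfolding of definitions that the paper leaves implicit (the lemma is stated there without proof as ``immediate from the definition''): identify $\sigma_E^i(w^\infty)$ with $(w^{(i)})^\infty$, use $|v|\leq|w|$ to reduce membership in $\cZ(v)$ to $v$ being a prefix of $w^{(i)}$, and invoke Proposition~\ref{prop:general-cyclic} so that $i\mapsto w^{(i)}$ bijects onto $[w]$, while $\|\eta_{[w]}\|=|w|$ handles the denominator. Nothing further is needed.
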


\begin{lemma} \label{lem:seminorms}
    For every $v \in E^*$, the function $\Pz M(X_E,\sigma_E)_+\to\Rz_{\geq 0}$ given by 
    \[ [\nu] \mapsto \frac{\langle v, \nu \rangle}{\| \nu\|}\]
    is continuous. 
\end{lemma}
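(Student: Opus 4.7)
The plan is to deduce continuity directly from the neighbourhood basis given in Lemma~\ref{lem:current-topology}. Fix $v \in E^*$ and $[\mu] \in \Pz M(X_E,\sigma_E)_+$. By Lemma~\ref{lem:current-topology}, the sets $U([\mu],\varepsilon,R)$ for $\varepsilon>0$ and $R \in \Zz_{>0}$ form a neighbourhood basis of $[\mu]$. Given any $\varepsilon>0$, set $R := |v|$ and take $[\nu] \in U([\mu],\varepsilon,R)$; then by definition of $U([\mu],\varepsilon,R)$, the value $w = v$ is among the paths tested, so
\begin{equation*}
    \left| \frac{\langle v,\nu\rangle}{\|\nu\|} - \frac{\langle v,\mu\rangle}{\|\mu\|} \right| < \varepsilon.
\end{equation*}
This shows continuity at $[\mu]$, and since $[\mu]$ was arbitrary, the function is continuous on $\Pz M(X_E,\sigma_E)_+$.

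Alternatively, and perhaps more conceptually, one can argue as follows. By the second assertion of Lemma~\ref{lem:current-topology}, the map $[\nu] \mapsto \nu/\|\nu\|$ defines a continuous embedding $\Pz M(X_E,\sigma_E)_+ \hookrightarrow M(X_E,\sigma_E)_+$. Since $X_E$ is totally disconnected, the cylinder set $\cZ(v)$ is clopen, so its characteristic function $\mathbbm{1}_{\cZ(v)}$ is continuous on the compact Hausdorff space $X_E$. Therefore evaluation at $\mathbbm{1}_{\cZ(v)}$, which sends $\mu \mapsto \mu(\cZ(v)) = \langle v,\mu\rangle$, is continuous on $M(X_E,\sigma_E)_+$ equipped with the weak$^*$-topology. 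Composing these two continuous maps yields continuity of $[\nu] \mapsto \langle v,\nu\rangle/\|\nu\|$. There is no real obstacle here; the statement is essentially a direct unpacking of the definition of the neighbourhood basis recorded in Lemma~\ref{lem:current-topology}, and is isolated as a lemma only because it will be invoked repeatedly in the proximality arguments of Section~\ref{ssec:proximal}.
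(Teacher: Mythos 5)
Your proposal is correct and matches the paper's argument, which simply observes that $\nu \mapsto \langle v,\nu\rangle/\|\nu\|$ is weak$^*$-continuous on $M(X_E,\sigma_E)_+\setminus\{0\}$ and scale-invariant, hence descends to the quotient $\Pz M(X_E,\sigma_E)_+$. Both of your arguments (the neighbourhood-basis unpacking and the composition through the normalising embedding) are valid and amount to the same observation.
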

\begin{proof}
    Clearly, it is continuous on $M(X_E,\sigma_E)_+$, and its value does not depend on the choice of representative of $[\nu]$. So it induces a continuous map on $\Pz M(X_E,\sigma_E)_+$. 
\end{proof}

\begin{lemma} \label{lem:embedding}
    Let $N \in \Zz_{>0}$, $w \in E^N$. Then, the following hold: 
    \begin{enumerate}
        \item There is a canonical embedding $\Pz M(X_{E^{[N]}\setminus w},\sigma_{E^{[N]}\setminus w})_+ \to \Pz M(X_E,\sigma_E)_+$. 
        \item In (1), the image is equal to the classes $[\mu]$ such that $\langle w,\mu \rangle =0$. 
    \end{enumerate}
\end{lemma}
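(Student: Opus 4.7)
The plan is to realize everything through the higher block conjugacy from symbolic dynamics. The key idea is that infinite paths in $X_{E^{[N]}}$ are in natural bijection with infinite paths in $X_E$, and the subshift $X_{E^{[N]}\setminus w}$ corresponds under this bijection to the closed shift-invariant subset of $X_E$ of paths avoiding the word $w$. Concretely, I would define $\phi \colon X_E \to X_{E^{[N]}}$ by $\phi(x)_{[i]} = x_{[i, i+N-1]}$ and verify that consecutive blocks are composable as edges of $E^{[N]}$, that $\phi$ is a homeomorphism intertwining the shifts (with inverse $(u^{(i)})_{i\geq 0} \mapsto u^{(0)}u^{(1)}_{[N-1]}u^{(2)}_{[N-1]}\cdots$), and that $\phi^{-1}(X_{E^{[N]}\setminus w})$ equals
\begin{equation}
Y \coloneqq \{x \in X_E : x_{[i,i+N-1]} \neq w \text{ for all } i \geq 0\},
\end{equation}
the closed shift-invariant subset of $X_E$ of one-sided paths containing no $w$-segment. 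This provides a shift-equivariant homeomorphism $(X_{E^{[N]}\setminus w}, \sigma_{E^{[N]}\setminus w}) \cong (Y, \sigma_E|_Y)$.

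For (1), I would then push forward along the shift-equivariant inclusion $\iota \colon Y \hookrightarrow X_E$ to obtain a continuous linear injection $\iota_* \colon M(Y, \sigma_E|_Y)_+ \hookrightarrow M(X_E, \sigma_E)_+$ whose image consists precisely of the shift-invariant measures on $X_E$ supported on $Y$. Composing with the identification from $\phi$ yields the desired injection $M(X_{E^{[N]}\setminus w}, \sigma_{E^{[N]}\setminus w})_+ \hookrightarrow M(X_E, \sigma_E)_+$. Being linear in $\mu$, this descends to a continuous injection on projectivisations; since the domain is compact Hausdorff by Alaoglu's theorem, it is automatically a topological embedding.

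For (2), it remains to identify the image. A shift-invariant measure $\mu$ is supported on $Y$ iff $\mu(X_E \setminus Y) = 0$, and we have the decomposition
\begin{equation}
X_E \setminus Y = \bigcup_{k \geq 0} \sigma_E^{-k}(\cZ(w)).
\end{equation}
Shift-invariance of $\mu$ gives $\mu(\sigma_E^{-k}(\cZ(w))) = \mu(\cZ(w)) = \langle w, \mu \rangle$ for every $k \geq 0$, so countable subadditivity yields $\mu(X_E \setminus Y) = 0$ iff $\langle w, \mu \rangle = 0$. By Lemma~\ref{lem:seminorms}, this condition is well-defined on classes in $\Pz M(X_E, \sigma_E)_+$, so the image of the embedding from (1) is exactly the set of $[\mu]$ with $\langle w, \mu \rangle = 0$. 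I do not expect any serious obstacle here; the only care needed is in setting up the higher block conjugacy so that the identification in (1) is canonical and shift-equivariant.
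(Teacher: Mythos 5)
Your proof is correct and follows essentially the same route as the paper: both use the higher block identification $(X_{E^{[N]}},\sigma_{E^{[N]}})\cong(X_E,\sigma_E)$ to realise $X_{E^{[N]}\setminus w}$ as the shift-invariant subset $Y$ of paths avoiding $w$, and then push forward along $Y\hookrightarrow X_E$. The only cosmetic difference is in part (2), where you deduce $\mu(X_E\setminus Y)=0$ directly from shift-invariance and countable subadditivity over $\bigcup_k\sigma_E^{-k}(\cZ(w))$, while the paper routes the same observation through Lemma~\ref{lem:subword}; both are equally valid.
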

\begin{proof}
    (1): 
    First, observe that the SFT $(X_{E^{[N]}}, \sigma_{E^{[N]}})$ is canonically identified with $(X_E, \sigma_E)$ via the map 
    \[ x=x_{[0]}x_{[1]}x_{[2]} \dots \mapsto (x_{[0]})_{[0]}(x_{[1]})_{[0]}(x_{[2]})_{[0]}\dots. \]
    Note that $(x_{[i]})_{[0]}$ denotes the first edge of the path $x_{[i]} \in E^N$ by definition. 
    Under this identification, the subshift $(X_{E^{[N]}\setminus w},\sigma_{E^{[N]}\setminus w})$ is identified with 
    \[ Y := \{ x \in X_E \mid x_{[k,k+N-1]} \neq w \text{ for all } k \geq 0\}. \]
    That is, $(X_{E^{[N]}\setminus w},\sigma_{E^{[N]}\setminus w}) \cong (Y,\sigma_E)$. Hence, the embedding is induced from push-forward from the inclusion map $Y\to X_E$, which is shift-equivariant. 

    (2):  
    For $\mu \in M(X_E,\sigma_E)_+$ with $\langle w, \mu\rangle = 0$, we have $\langle w',\mu\rangle = 0$ for all $w' \in E^*$ which contain $w$ by Lemma~\ref{lem:subword}. It follows that $\mu$ is concentrated on $X_E\setminus\bigcup_{w'}Z(w')$, where the union runs over all words $w'$ containing $w$, which is precisely $Y$. 
\end{proof}

Lemma~\ref{lem:embedding} shall be used in the following way, except for the case $E=R_2$: 
\begin{corollary}\label{cor:rational-approx}
Let $(e,f)\in E^2$. Suppose that $E^{[2]} \setminus ef$ is strongly connected. If $[\mu] \in \Pz M(X_E,\sigma_E)_+$ satisfies $\langle ef,\mu \rangle =0$, then $[\mu]$ can be approximated by projective classes $[\eta_{[w]}]$ such that $w$ does not contain $ef$. 
\end{corollary}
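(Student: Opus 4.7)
The plan is to reduce to Proposition~\ref{prop:periodicmeasures} applied to the subshift defined by the graph $F := E^{[2]}\setminus ef$, and then to transfer the resulting periodic measures back along the embedding of Lemma~\ref{lem:embedding}.

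First, by Lemma~\ref{lem:embedding}(2), the hypothesis $\langle ef,\mu\rangle = 0$ implies that $[\mu]$ lies in the image of the canonical embedding $\iota\colon \Pz M(X_F,\sigma_F)_+ \hookrightarrow \Pz M(X_E,\sigma_E)_+$, so write $[\mu] = \iota([\mu'])$ for some $[\mu']\in \Pz M(X_F,\sigma_F)_+$. Since $F$ is strongly connected by hypothesis (and the case when $F$ is a subdivided circle is trivial, as then $\Pz M(X_F,\sigma_F)_+$ reduces to a point), Proposition~\ref{prop:periodicmeasures} yields primitive cycles $p_n \in F^*_\prim$ with $[\eta_{[p_n]}] \to [\mu']$ in $\Pz M(X_F,\sigma_F)_+$. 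The map $\iota$ is continuous, as it is induced by the weak$^*$-continuous push-forward along the inclusion of subshifts $X_F \hookrightarrow X_E$, and hence $\iota([\eta_{[p_n]}]) \to [\mu]$ in $\Pz M(X_E,\sigma_E)_+$.

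It remains to realise each $\iota([\eta_{[p_n]}])$ as $[\eta_{[w_n]}]$ for a primitive cycle $w_n \in E^*_\prim$ not containing $ef$. Under the shift-equivariant identification $X_{E^{[2]}}\cong X_E$ used in the proof of Lemma~\ref{lem:embedding}, any cycle $p=(a_0,a_1)(a_1,a_2)\cdots(a_{k-1},a_0)$ of length $k$ in $E^{[2]}$ corresponds bijectively to the cycle $w_p := a_0 a_1 \cdots a_{k-1}$ of length $k$ in $E$, and the measure $\eta_{[p]}$ is sent to $\eta_{[w_p]}$ (both are sums of $k$ Dirac masses on the shifts of the corresponding periodic point). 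This correspondence transfers primitivity: a nontrivial factorisation $w_p = u^m$ with $u\in E^*_\cycle$ lifts, via the same edge-encoding, to a nontrivial factorisation $p=q^m$ in $(E^{[2]})^*_\cycle$, contradicting primitivity of $p$. Furthermore, the condition that no edge of $p$ equals $(e,f)$ is exactly the condition that $w_p$ contains no $ef$-segment, including the cyclic one $a_{k-1}a_0$, so $\langle ef, \eta_{[w_p]}\rangle=0$. Taking $w_n := w_{p_n}$ completes the proof.

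The main point is simply setting up the dictionary between cycles in $E^{[2]}$ and cycles in $E$ correctly; once this is done, preservation of primitivity and of the $ef$-avoidance are immediate from the definitions, so no serious obstacle is expected beyond this bookkeeping.
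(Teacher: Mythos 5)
Your proposal is correct and follows exactly the paper's (one-line) proof: apply Lemma~\ref{lem:embedding} to identify $[\mu]$ with a class on $X_{E^{[2]}\setminus ef}$ and then invoke Proposition~\ref{prop:periodicmeasures} for that subshift. The extra bookkeeping you supply (the dictionary between cycles of $E^{[2]}\setminus ef$ and $ef$-avoiding primitive cycles of $E$, and continuity of the embedding) is accurate and just makes explicit what the paper leaves implicit.
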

\begin{proof}
    This follows from Lemma~\ref{lem:embedding} and Proposition~\ref{prop:periodicmeasures}. 
\end{proof}

\subsection{Marker COEs} \label{ssec:marker}
The notion of a marker automorphism for an SFT goes back to \cite{BLR88}, see also \cite{Ash90} and \cite{BFK90}. 
We modify the usual definition by allowing the data set to consist of two words of (possibly) different lengths. 
Let $D=\{d,d'\}\subseteq E^* \cup \{o\}$, where $d\neq d'$ (the \emph{data}) such that $s(d)=s(d')=v$ and $r(d)=r(d')=u$. Let $\m, \m'\in E^*$ (the \emph{markers}) be such that $s(\m')=u$ and $r(\m)=v$. We assume that either $\m=\m'$, or $\m \neq \m'$ and $|\m|=|\m'|=1$. 

We want to define a map $\varphi\colon X_E\to X_E$ by replacing each instance of $\m d\m'$ in $x$ with $\m d'\m'$ and each instance of $\m d'\m'$ in $x$ with $\m d\m'$. We now introduce overlap conditions which ensure such a map is bijective.

\begin{definition}
    For $w,w' \in E^*$, \emph{the set of overlaps} of $w$ with $w'$ is defined by
    \[ S(w,w') = \{ k \in \Zz_{>0} \colon w_{[|w|-k,|w|-1]} = w'_{[0,k-1]} \}.
    \]
\end{definition}

In the rest of this subsection, we assume the following, which we call the \emph{overlap conditions} (cf. \cite[Section~13.2]{LM21}): 
\begin{itemize}
    \item $\m d \m'$ is not a subword of $\m d' \m'$, and $\m d' \m'$ is not a subword of $\m d \m'$. 
    \item If $\m=\m'$, then 
    \begin{align*}
        &S(\m d \m,\m d' \m),\  S(\m d' \m,\m d \m) \subseteq \{1,\dots,|\m|\}, \\
        &S(\m d \m,\m d \m)\subseteq\{1,\dots,|\m|\}\cup\{|\m d \m|\},\\
        &S(\m d' \m,\m d' \m)\subseteq\{1,\dots,|\m|\} \cup\{|\m d' \m|\}.
    \end{align*} 
    \item If $\m \neq \m'$ and $\m,\m' \in E^1$, then
    \begin{align*}
        &S(\m d \m',\m d' \m') = S(\m d' \m',\m d \m') = \emptyset,\\
        &S(\m d \m',\m d \m')=\{|\m d \m'|\},\\
        &S(\m d' \m',\m d' \m')=\{|\m d' \m'|\}.
    \end{align*}
\end{itemize}

\begin{lemma} \label{lem:disjoint}
    We have $\cZ(\m d\m')\cap \cZ(\m d'\m')=\emptyset$. 
\end{lemma}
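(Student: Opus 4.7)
The plan is to argue by contradiction directly from the first of the overlap conditions together with the hypothesis $d \neq d'$. Suppose $x \in \cZ(\m d\m') \cap \cZ(\m d'\m')$. Then $x$ begins simultaneously with the finite path $\m d\m'$ and with the finite path $\m d'\m'$, so comparing these two prefixes of $x$ character by character gives a coincidence on an initial segment of length $\min(|\m d\m'|,|\m d'\m'|)$.

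I would split into two cases according to the relative lengths of $d$ and $d'$. First, if $|d|=|d'|$, then $|\m d\m'|=|\m d'\m'|$, and the two prefixes of $x$ must agree as words in $E^*$, i.e.\ $\m d\m' = \m d'\m'$, which forces $d=d'$ by cancellation, contradicting the standing assumption $d\neq d'$. Second, if $|d|\neq|d'|$, then without loss of generality $|\m d\m'| < |\m d'\m'|$; but then $\m d\m'$ appears as a prefix (and in particular a segment) of $\m d'\m'$, which is exactly what is forbidden by the first overlap condition that \emph{$\m d\m'$ is not a subword of $\m d'\m'$, and $\m d'\m'$ is not a subword of $\m d\m'$}.

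Either case yields a contradiction, so $\cZ(\m d\m') \cap \cZ(\m d'\m') = \emptyset$. There is no genuine obstacle here: the statement is essentially the content of the first bullet of the overlap conditions together with $d\neq d'$, and the only care required is to treat the equal-length case separately (where the overlap condition is vacuous but the distinctness of $d$ and $d'$ suffices). The other (more delicate) overlap conditions involving the sets $S(\cdot,\cdot)$ will only be needed later, when showing that the proposed marker substitution $\varphi$ is well-defined and bijective, not for this preliminary disjointness statement.
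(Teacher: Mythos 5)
Your proof is correct. It reaches the same key fact as the paper --- namely that neither of $\m d\m'$, $\m d'\m'$ can be a prefix of the other (equality included) --- but it gets there by a different combination of hypotheses. The paper's own proof invokes only the overlap conditions on the sets $S(\cdot,\cdot)$: if, say, $\m d\m'$ were a prefix of $\m d'\m'$ (possibly with empty remainder), then $|\m d\m'|\in S(\m d\m',\m d'\m')$, and since $|\m d\m'|>|\m|$ this contradicts $S(\m d\m',\m d'\m')\subseteq\{1,\dots,|\m|\}$; the symmetric condition rules out the other inclusion, and the equal-length case is covered by the same mechanism. You instead use the ``not a subword'' bullet for the unequal-length case and cancellation from $d\neq d'$ for the equal-length case (where the subword condition could be read as vacuous). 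Both routes are valid and equally short; the only inaccuracy is your closing suggestion that the $S(\cdot,\cdot)$ conditions play no role here --- in the paper they are exactly what is cited for this lemma, although, as your argument shows, they are not strictly needed for it.
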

\begin{proof}
    Since $S(\m d \m',\m d' \m'),\  S(\m d' \m',\m d \m') \subseteq \{1,\dots,|\m|\}$, there does not exist $w \in E^*\cup\{o\}$ such that $\m d\m'=\m d'\m' w$ or $\m d \m' w = \m d' \m'$. Hence, $\cZ(\m d\m')\cap \cZ(\m d'\m')=\emptyset$.
\end{proof}

We define $\varphi\colon X_E\to X_E$ as follows: given $x\in X_E$, we inductively define sequences $w_n\in E^*$ and $y_n\in X_E$ by 
\begin{enumerate}
    \item $w_0\coloneqq o$, and $y_0\coloneqq x$;
    \item \begin{itemize}
        \item if $y_n\in\cZ(\m d\m')$, then $w_{n+1}\coloneqq \m d'$ and $y_{n+1}$ is defined to be the unique infinite path such that $y_n=\m d y_{n+1}$;
        \item if $y_n\in\cZ(\m d'\m')$, then $w_{n+1}\coloneqq \m d$ and $y_{n+1}$ is defined to be the unique infinite path such that $y_n=\m d' y_{n+1}$;
        \item if $y_n\in X_E\setminus \cZ(\m d\m')\cup \cZ(\m d'\m')$, then $w_{n+1}\coloneqq (y_n)_{[0]}$ and $y_{n+1}$  defined as the unique infinite path such that $y_n=w_{n+1}y_{n+1}$.
    \end{itemize}
\end{enumerate}
This process is well-defined by Lemma~\ref{lem:disjoint}. 
Now we put $\varphi(x)\coloneqq w_1w_2\cdots$. Note that $\varphi$ is characterised by 
\begin{equation*}
    \varphi(x)=\begin{cases}
        \m d'\varphi(\m' y) & \text{ if } x=\m d \m' y \text{ for some } y \in X_E,\\
        \m d\varphi(\m' y) & \text{ if } x=\m d' \m' y \text{ for some } y \in X_E,\\
        x_{[0]}\varphi(x_{[1,\infty)}) & \text{ if }x\in X_E\setminus \cZ(\m d\m')\cup \cZ(\m d'\m').
    \end{cases}
\end{equation*}

\begin{definition}
    We call the COE $\varphi$ defined above a \emph{marker COE} for the markers $\m, \m'$ and the data set $D=\{d,d'\}$. We call it \emph{type I} if $\m=\m'$, and \emph{type II} if $\m \neq \m'$ and $|\m|=|\m'|=1$. 
\end{definition}

\begin{proposition} \label{prop:bijective}
    The marker COE $\varphi$ is an involutive homeomorphism. 
\end{proposition}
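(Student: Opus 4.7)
The plan is to prove Proposition~\ref{prop:bijective} in three parts: (i) $\varphi$ is a well-defined continuous self-map of $X_E$; (ii) $\varphi \circ \varphi = \id_{X_E}$; (iii) $\varphi$ is a homeomorphism. Since $X_E$ is compact Hausdorff, part (iii) is immediate from (i) and (ii) (a continuous involution is automatically bijective with itself as continuous inverse, and a continuous bijection from a compact space to a Hausdorff space is a homeomorphism). Part (i) is routine: composability of the output $\varphi(x) = w_1 w_2 \cdots$ follows from $r(d) = r(d') = s(\m')$ combined with the fact that whenever $w_{n+1} \in \{\m d, \m d'\}$, the next state $y_{n+1}$ begins with $\m'$. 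Continuity is a consequence of the local nature of the construction: each iteration inspects at most $L := \max(|\m d \m'|, |\m d' \m'|)$ letters of the current state and outputs at least one letter, so $\varphi(x)_{[0, N-1]}$ depends only on a prefix of $x$ of length bounded by $NL$.

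The main content is part (ii), for which the plan is to establish the following \emph{block correspondence}: for every $x \in X_E$,
\[
x \in \cZ(\m d \m') \Longleftrightarrow \varphi(x) \in \cZ(\m d' \m'), \qquad x \in \cZ(\m d' \m') \Longleftrightarrow \varphi(x) \in \cZ(\m d \m'),
\]
and if $x$ lies in neither cylinder, then neither does $\varphi(x)$. Granted this, involution follows by a letter-by-letter verification: for instance, if $x = \m d \m' y$, then $\varphi(x) = \m d' \cdot \varphi(\m' y) \in \cZ(\m d' \m')$, so applying $\varphi$ once more outputs $\m d$ with residual state $\varphi(\m' y)$, and a companion claim ensures that $\varphi(\m' y)$ begins with $\m'$ so that the subsequent step produces the prefix $\m' \cdots$ matching $x$. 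An induction on the prefix length (justified by the local nature of $\varphi$, so that only a finite portion of $x$ needs to be analyzed to check any fixed prefix of $\varphi(\varphi(x))$) then yields $\varphi(\varphi(x)) = x$ for all $x$.

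Proving the block correspondence is the main obstacle, and it is precisely here that the overlap conditions are essential. In Type II, the argument is direct: if $x = \m d \m' y$, then $\m' y$ cannot begin with $\m d \m'$ or $\m d' \m'$ (since $\m \neq \m'$ and $|\m| = |\m'| = 1$), so $\varphi(\m' y) = \m' \varphi(y)$ and $\varphi(x) = \m d' \m' \varphi(y) \in \cZ(\m d' \m')$; the emptiness of $S(\m d \m', \m d' \m')$ and $S(\m d' \m', \m d \m')$, together with Lemma~\ref{lem:disjoint}, handles the converse direction and the ``otherwise'' case. Type I ($\m = \m'$) is more delicate: one must verify that the construction preserves the initial $\m$-segment of any state arising immediately after a block replacement, which I plan to do using the bounds $S(\m d \m, \m d' \m), S(\m d' \m, \m d \m) \subseteq \{1, \ldots, |\m|\}$ together with the analogous self-overlap bounds, which preclude any block replacement that would begin within the first $|\m|$ letters and thus rewrite the leading $\m$. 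The ``no spurious blocks'' direction invokes the non-subword assumption on the pair $\m d \m', \m d' \m'$, together with the overlap bounds, to exclude every scenario in which a new marker block could appear in $\varphi(x)$ at a position not already present in $x$.
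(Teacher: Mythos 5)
Your proposal is correct and follows essentially the same route as the paper's proof: continuity from the local, length-nondecreasing nature of the construction, then the involution via the three-case block correspondence (with the type I preservation of the leading $\m$-segment and the ``no spurious blocks'' step being exactly where the paper also invokes the overlap conditions), and finally the homeomorphism property from compactness. The details you defer are precisely the ones the paper carries out, using the same overlap bounds you identify.
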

\begin{proof}
    We first show continuity of $\varphi$. Let 
    \[ c = \frac{\min\{|\m d \m'|, |\m d'\m'|\}}{\max\{|\m d \m|, |\m d' \m'|\}}.\]
    For $x,y \in X_E$, if we have $x_{[0,n-1]} = y_{[0,n-1]}$ for some $n \in \Zz_{>0}$, then $\varphi(x)_{[0,m-1]}=\varphi(x)_{[0,m-1]}$, where $m$ is the integer part of $cn$, by definition of $\varphi$. Therefore, if $x_n \in X_E$ is a sequence converging to $x \in X_E$, then $\varphi(x_n) \to \varphi(x)$ as $n \to \infty$. This shows continuity. 
    
    Hence, it suffices to show $\varphi^2=\id$. 
    Let $x = \m d \m' y$ for some $y \in X_E$. Then, we have  
    $\varphi(x) = \m d' \varphi(\m' y)$. 
    We claim that $\varphi(\m'y) \in \cZ(\m')$. If it is of type II,  
    then we have $\m'y \not\in \cZ(\m d \m') \cup \cZ(\m d' \m')$, so that $\varphi(\m'y) = \m' \varphi(y)$. 
    Suppose it is of type I. Decompose $\m=\m_1\m_2$ with $\m_1, \m_2 \in E^* \cup \{o\}$ such that 
    $\m_{[k,|\m|-1]} y \not\in \cZ(\m d\m) \cup \cZ(\m d' \m)$ for all $0 \leq k \leq |\m_1|-1$ and 
    $\m_2 y \in \cZ(\m d\m) \cup \cZ(\m d' \m)$. 
    If $\m_2=o$, then $\varphi(\m y)=\m \varphi(y) \in \cZ(\m)$ by definition of $\varphi$. Otherwise, 
    $\m_2$ is a prefix of $\m$. Hence, we have $\varphi(\m y) = \m_1 \varphi(\m_2y)$ and $\varphi(\m_2 y) \in \cZ(\m) \subseteq \cZ(\m_2)$, so that $\varphi(\m y) \in \cZ(\m_1\m_2)=\cZ(\m)$ by definition of $\varphi$. 
    Therefore, we have $\varphi(x) \in \cZ(\m d' \m')$, so that $\varphi^2(x) = \m d \varphi^2(\m'y)$. Similarly, if $x = \m d' \m' y$ for some $y \in X_E$, then $\varphi^2(x) = \m d' \varphi^2(\m'y)$. 

    Let $x\in X_E\setminus \cZ(\m d\m')\cup \cZ(\m d'\m')$. We show that $\varphi(x) \in  X_E\setminus \cZ(\m d\m')\cup \cZ(\m d'\m')$. Suppose $\varphi(x) = \m d \m' y$ for some $y \in X_E$. 
    If $x_{[k,\infty)} \not\in \cZ(\m d\m')\cup \cZ(\m d'\m')$ for all $0 \leq k \leq |\m d \m'|-1$, then 
    \[ \varphi(x) = x_{[0,|\m d \m'|-1]} \varphi(x_{[|\m d \m'|-1, \infty)}) = \m d \m'y,\] so that 
    $x_{[0,|\m d \m'|-1]} = \m d \m'$, which is a contradiction. 
    Hence, we can decompose $x = w z$ such that $0 < |w| < |\m d \m'|$,  
    $x_{[k,\infty)} \not\in \cZ(\m d\m')\cup \cZ(\m d'\m')$ for all $0 \leq k \leq |w|-1$, and $z \in \cZ(\m d\m')\cup \cZ(\m d'\m')$. Then, we have $\varphi(x)=w\varphi(z)$ and $\varphi(z) \in \cZ(\m d\m')\cup \cZ(\m d'\m')$. If $\varphi$ is of type II, then this contradicts the overlap conditions. Suppose $\varphi$ is of type I. Then, we see that $|w| < |\m d|$ because if $|w| \geq |\m d|$, then $\varphi$ does not change the first $w$-segment of $x$ and the subsequent $\m$-segment of $x$, so that $x$ begins with $\m d \m$ since $|w|+|\m| \geq |\m d \m|$, which is a contradiction. Hence, the overlap is bigger than $|\m|$, which is a contradiction. 
    Thus, $\varphi(x) \not\in \cZ(\m d\m')$. We see $\varphi(x) \not\in \cZ(\m d'\m')$ in the same way, so that $\varphi(x) \in  X_E\setminus \cZ(\m d\m')\cup \cZ(\m d'\m')$. Therefore, $\varphi^2(x) = x_{[0]}\varphi^2(x_{[1,\infty]})$. 

    Overall, we obtain that 
    \begin{equation*}
         \varphi^2(x)=\begin{cases}
            \m d\varphi^2(\m' y) & \text{ if } x=\m d \m' y \text{ for some } y \in X_E,\\
            \m d'\varphi^2(\m' y) & \text{ if } x=\m d' \m' y \text{ for some } y \in X_E,\\
            x_{[0]}\varphi^2(x_{[1,\infty]}) & \text{ if }x\in X_E\setminus \cZ(\m d\m')\cup \cZ(\m d'\m').
        \end{cases}
    \end{equation*}
    Therefore, we conclude that $\varphi^2=\id$ by taking the limit. 
\end{proof}

\begin{proposition}
    The marker COE $\varphi$ is indeed a COE.
\end{proposition}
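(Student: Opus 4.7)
The plan is to produce an explicit $\varphi$-cocycle pair $(k,l)$ via a case split on whether $x$ lies in $\cZ(\m d \m')$, in $\cZ(\m d' \m')$, or in the complement. Since Proposition~\ref{prop:bijective} gives $\varphi^{-1} = \varphi$, equation \eqref{eqn:COE'} reduces to \eqref{eqn:COE} with the same $(k,l)$, so only one cocycle pair needs to be constructed. By Lemma~\ref{lem:disjoint}, those three pieces partition $X_E$ into clopen subsets; hence defining $k,l$ to be constant on each piece will automatically make them continuous, and I will take $k=0,\,l=1$ on the complement, $k = |\m d|-1,\, l = |\m d'|$ on $\cZ(\m d \m')$, and the symmetric values $k = |\m d'|-1,\, l = |\m d|$ on $\cZ(\m d' \m')$.

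On the complement the cocycle identity \eqref{eqn:COE} is immediate from the defining recursion $\varphi(x) = x_{[0]}\varphi(\sigma_E x)$. The key input for the two cylinder cases is the following combinatorial claim, which I will isolate as a preliminary step: for $x \in \cZ(\m d \m')$ and $1 \leq i \leq |\m d|-1$, one has $\sigma_E^i(x) \notin \cZ(\m d \m') \cup \cZ(\m d' \m')$ (and the analogous statement on $\cZ(\m d' \m')$ with the roles of $d$ and $d'$ swapped). Granting this, writing $x = \m d \m' y \in \cZ(\m d \m')$ and iterating the complement recursion $|\m d| - 1$ times yields
\begin{equation}
\varphi(\sigma_E x) \;=\; x_{[1,|\m d|-1]}\,\varphi(\sigma_E^{|\m d|} x) \;=\; x_{[1,|\m d|-1]}\,\varphi(\m' y);
\end{equation}
comparing this with the replacement recursion $\varphi(x) = \m d' \varphi(\m' y)$ gives $\sigma_E^{|\m d|-1}(\varphi(\sigma_E x)) = \varphi(\m' y) = \sigma_E^{|\m d'|}(\varphi(x))$, which is exactly \eqref{eqn:COE} with the claimed values of $k,l$. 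The case $x \in \cZ(\m d' \m')$ is handled in the symmetric way.

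The main obstacle is verifying the combinatorial claim. If $\sigma_E^i(x) \in \cZ(\m d \m')$, then $x$ carries two occurrences of the length-$|\m d \m'|$ word $\m d \m'$ at positions $0$ and $i$; since $1 \leq i \leq |\m d|-1$, these two occurrences are not equal, and one is not contained in the other, so they must exhibit a nontrivial suffix-prefix overlap of length $|\m d \m'| - i \in [\,|\m'|+1,\, |\m d \m'|-1\,]$, placing $|\m d \m'| - i$ in $S(\m d \m', \m d \m')$. Similarly, $\sigma_E^i(x) \in \cZ(\m d' \m')$ would force either $\m d' \m' \subseteq \m d \m'$---ruled out by the first overlap condition---or a nontrivial overlap of length $|\m d \m'| - i$ lying in $S(\m d \m', \m d' \m')$. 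In both type~I ($\m = \m'$) and type~II ($|\m|=|\m'|=1$, $\m \neq \m'$), the assumed overlap conditions bound the nonzero elements of these sets by $|\m|$ or restrict them to $\{|\m d \m'|\}$, neither of which is compatible with $|\m d \m'| - i > |\m|$ and $i \geq 1$. The slight subtlety is that $|d|$ and $|d'|$ may differ, so the two cylinder cases are not formally symmetric, but the same length bound $|\m d \m'| - i \geq |\m'| + 1 > |\m|$ (respectively $|\m d' \m'| - i \geq |\m'| + 1 > |\m|$ in the other case) goes through uniformly and yields the contradiction.
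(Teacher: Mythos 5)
Your proposal is correct and takes essentially the same route as the paper: the same explicit cocycle pair (constant on the three clopen pieces, with $k=m-1+|d|$, $l=m+|d'|$ on $\cZ(\m d\m')$ and symmetrically on $\cZ(\m d'\m')$), the same reduction of \eqref{eqn:COE'} to \eqref{eqn:COE} via involutivity, and the same key combinatorial step that the overlap conditions forbid an occurrence of $\m d\m'$ or $\m d'\m'$ at positions $1,\dots,|\m d|-1$. Your write-up just spells out the overlap/subword case analysis slightly more explicitly than the paper does.
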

\begin{proof}
Let $m=|\m|=|\m'|$. 
By Lemma~\ref{lem:disjoint}, we can define maps $k,l\colon X_E\to \Zz_{\geq 0}$ by 
\[
k(x)\coloneqq \begin{cases}
 m-1+|d|   & \text{ if } x\in \cZ(\m d\m'),\\
 m-1+|d'|   & \text{ if } x\in \cZ(\m d'\m'),\\
  0  & \text{ if } x\in X_E\setminus \cZ(\m d\m')\cup \cZ(\m d'\m'),
\end{cases}
\]
and 
\[
l(x)\coloneqq \begin{cases}
m+|d'|   & \text{ if } x\in \cZ(\m d\m'),\\
m+|d|   & \text{ if } x\in \cZ(\m d'\m'),\\
 1   & \text{ if } x\in X_E\setminus \cZ(\m d\m')\cup \cZ(\m d'\m').
\end{cases}
\]
Then, $k,l$ are locally constant, hence continuous. We will show that $\sigma_E^{k(x)}(\varphi(\sigma_E(x))=\sigma_E^{l(x)}(\varphi(x))$ for all $x\in X_E$. Since $\varphi$ is an involution, this suffices to show that $\varphi$ is a COE. 

If $x\in \cZ(\m d\m')$, then we can write $x=\m d\m' y$, and we have $\varphi(x)=\m d'\varphi(\m' y)$. 
On the other hand, we have $\sigma_E(x)=w\m'y$, where $w=\m_{[1,m-1]}d$. 
The overlap conditions imply that for every $0 \leq k \leq |w|-1$, $w_{[k,|w|-1]}\m'$ is not a prefix of $\m d \m'$ or $\m d' \m'$, and $\m d' \m'$ is not a prefix of $w_{[k,|w|-1]}\m'$. 
Hence, $\sigma_E(x)_{[k,\infty)} \not\in  \cZ(\m d\m')\cup \cZ(\m d'\m')$ for every $0 \leq k \leq |w|-1$, so that $\varphi(\sigma_E(x))=\m_{[1,m-1]}d\varphi(\m' y)$. 
Now we see that $\sigma_E^{m-1+|d|}(\varphi(\sigma_E(x)))=\sigma_E^{m+|d'|}(\varphi(x))$. 
The same argument works for $x\in \cZ(\m d'\m')$. 
If $x\in X_E\setminus \cZ(\m d\m')\cup \cZ(\m d'\m')$, then we can write $x=ey$ for some $e\in E^1$ such that $\varphi(x)=e\varphi(y)$. Now we have $\varphi(\sigma_E(x))=\varphi(y)=\sigma_E(e\varphi(y))=\sigma_E(\varphi(x))$.
\end{proof}

\subsection{Good representatives for marker COEs} \label{ssec:good}

In this subsection, we consider a type I or type II marker COE $\varphi$ for the markers $\m,\m' \in E^1$ and $D=\{o,v\}$ with $v \in E^*$. 

\begin{definition}
    Let $[w] \in [E_\cycle^*]$. 
    \begin{enumerate}
        \item Suppose $\varphi$ is of type I.  
        \begin{itemize}
            \item When a representative of $[w]$ contains a $\m \m$-segment or a $\m v \m$-segment, a representative $w \in [w]$ is called a \emph{good representative} if $w$ starts with $\m$ and ends with $\m$ or $\m v$. 
            \item When $[w]=[\m]$, the unique representative $\m \in [\m]$ is called a \emph{good representative}. When $[w]=[\m v]$, the representative $\m v \in [\m v]$ is called a \emph{good representative}. 
            \item When $[w] \neq [\m], [\m v]$, and every representative $[w]$ contains no $\m \m$-segment and no $\m v \m$-segment, every representative $w \in [w]$ is called a \emph{good representative}. 
        \end{itemize}
        \item Suppose $\varphi$ is of type II.  
        \begin{itemize}
            \item When a representative of $[w]$ contains a $\m \m'$-segment or a $\m v \m'$-segment, a representative $w \in [w]$ is called a \emph{good representative} if $w$ ends with $\m\m'$ or $\m v \m'$.
            \item When every representative of $[w]$ contains no $\m \m'$-segment and no $\m v \m'$-segment, every representative $w \in [w]$ is called a \emph{good representative}.
        \end{itemize}
    \end{enumerate}
\end{definition}

For a marker COE $\varphi$ as above, we associate a map $F_\varphi \colon [E_\cycle^*] \to [E_\cycle^*]$ in the following way: Let $[w] \in [E^*_\cycle]$ and let $w$ be a good representative of $[w]$. 

If $\varphi$ is of type II, then
\begin{enumerate}
    \item Let $w_0\coloneqq o$ and let $y_0\coloneqq w$;
    \item \begin{itemize}
        \item if $y_n$ starts with $\m v \m'$, then $w_{n+1}\coloneqq \m$ and $y_{n+1}$ is defined to be the unique finite path such that $y_n=\m v y_{n+1}$;
        \item if $y_n$ starts with $\m \m'$, then $w_{n+1}\coloneqq \m v$ and $y_{n+1}$ is defined to be the unique finite path such that $y_n=\m y_{n+1}$;
        \item if $y_n$ is non-empty and does not start with $\m\m'$ nor $\m v \m'$, then $w_{n+1}$ is defined to be the first edge of $y_n$, and $y_{n+1}$ is defined to be the unique finite (or empty) path such that $y_n=w_{n+1}y_{n+1}$.
        \item if $y_n=o$, then put $N=n$ and we finish this procedure. 
    \end{itemize}
\end{enumerate}
Then, we put $\bar{F}_\varphi(w)=w_1\dots w_N$. 

If $\varphi$ is of type I, then 
\begin{enumerate}
    \item If $w \neq \m,\m v$ and $w$ does not contain any $\m\m$-segment or any $\m v \m$-segment, we let $\bar{F}_{\varphi}(w)=w$ and finish the procedure. 
    \item Otherwise, let $w_0\coloneqq o$ and let $y_0\coloneqq w \m$;
    \item \begin{itemize}
        \item if $y_n$ starts with $\m v \m$, then $w_{n+1}\coloneqq \m$ and $y_{n+1}$ is defined to be the unique finite path such that $y_n=\m v y_{n+1}$;
        \item if $y_n$ starts with $\m \m$, then $w_{n+1}\coloneqq \m v$ and $y_{n+1}$ is defined to be the unique finite path such that $y_n=\m y_{n+1}$;
        \item if $y_n \neq \m$ and does not start with $\m\m$ nor $\m v \m$, then $w_{n+1}$ is defined to be the first edge of $y_n$, and $y_{n+1}$ is defined to be the unique finite path such that $y_n=w_{n+1}y_{n+1}$.
        \item if $y_n=\m$, then put $N=n$ and we finish this procedure. 
    \end{itemize}
\end{enumerate}
Then, we put $\bar{F}_\varphi(w)=w_1\dots w_N$. Note that $\bar{F}_\varphi(\m)=\m v$ and $\bar{F}_{\varphi}(\m v) = \m$. 

We define $F_\varphi \colon [E_\cycle^*] \to [E_\cycle^*]$ by $F_\varphi([w]) = [\bar{F}_\varphi(w)]$ for $[w] \in [E_\cycle^*]$ with a good representative $w \in [w]$. Note that if $w_1,w_2 \in [w]$ are both good representatives, then it is not difficult to see that $\bar{F}(w_1)$ and $\bar{F}(w_2)$ are equivalent under cyclic permutations. 

\begin{remark}
    If $w$ is a good representative of $[w]$, then $\varphi(w^\infty)=\bar{F}_{\varphi}(w)^\infty$ by construction. 
    This is because by the overlap conditions, $\varphi$ does not cause any replacement at the intermediate of $ww$. 
\end{remark}

\begin{lemma} \label{lem:goodrep}
    If $[w] \in [E^*_{\prim}]$, then $F_\varphi([w]) \in [E^*_{\prim}]$. 
    In particular, 
    $\varphi[w]=F_\varphi([w])$ for every $[w] \in [E^*_\prim]$. 
\end{lemma}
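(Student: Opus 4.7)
The plan is to reduce the lemma to showing that $\bar{F}_\varphi(w)$ is a primitive cycle whenever $w$ is a good representative of a primitive class. By the remark preceding the lemma, $\varphi(w^\infty)=\bar{F}_\varphi(w)^\infty$, so if I write $\bar{F}_\varphi(w)=q^k$ with $q\in E^*_\prim$ and $k\geq 1$, then $\varphi(w^\infty)=q^\infty$ and its tail equivalence class corresponds to $[q]$; consequently $\varphi[w]=[q]$ while $F_\varphi([w])=[q^k]$, and the whole lemma reduces to the assertion $k=1$.

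Next, I would assume for contradiction that $k\geq 2$ and pick a good representative $q'\in[q]$. The involution $\varphi^2=\id$ from Proposition~\ref{prop:bijective} applied to $\varphi(w^\infty)=q^\infty$ gives $\varphi(q^\infty)=w^\infty$, and since $q'^\infty$ is a shift of $q^\infty$, the COE defining identities ensure that $\varphi(q'^\infty)=\bar{F}_\varphi(q')^\infty$ is tail equivalent to $w^\infty$. Both sides being purely periodic and $w$ primitive then forces $\bar{F}_\varphi(q')$ to be a power of a cyclic rotation of $w$; that is, $\bar{F}_\varphi(q')=u^t$ for some $u\in[w]$ and a positive integer $t=|\bar{F}_\varphi(q')|/|w|$.

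The contradiction will come from a length computation. Let $\tilde a_u$ and $\tilde b_u$ denote the cyclic counts of $\m\m'$- and $\m v\m'$-segments (respectively $\m\m$- and $\m v\m$-segments in type I) in a cyclic word $u$. The construction of $\bar{F}_\varphi$ swaps these two counts, and the overlap conditions preclude any additional segments appearing at the junctions when copies of a good representative are concatenated, giving $\tilde a_{u^m}=m\tilde a_u$, $\tilde b_{u^m}=m\tilde b_u$, and
\begin{equation*}
|\bar{F}_\varphi(u)|=|u|+(\tilde a_u-\tilde b_u)|v|.
\end{equation*}
Applying these to $\bar{F}_\varphi(w)=q^k$ and using $|q'|=|q|$ yields $\tilde b_w=k\tilde a_q$, $\tilde a_w=k\tilde b_q$, and after substitution $|\bar{F}_\varphi(q')|=|w|/k$. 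Combined with $|\bar{F}_\varphi(q')|=t|w|$, this forces $t=1/k$, contradicting $t\geq 1$ and $k\geq 2$, so $k=1$.

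Finally, the degenerate cases have to be handled directly. If $w$ has no relevant segments cyclically and, in type I, $[w]\notin\{[\m],[\m v]\}$, then by construction $\bar{F}_\varphi(w)=w$, so primitivity is preserved trivially; in type I, $w=\m v$ gives $\bar{F}_\varphi(w)=\m$, a primitive loop, and $w=\m$ gives $\bar{F}_\varphi(w)=\m v$, whose primitivity I would deduce directly from the overlap condition $S(\m v\m,\m v\m)\subseteq\{1,|\m v\m|\}$ together with the requirement that $\m\m$ not be a subword of $\m v\m$. The main obstacle is the cyclic segment counting, in particular justifying $\tilde a_{u^m}=m\tilde a_u$ for a good representative $u$; this requires careful combinatorial use of the various overlap conditions to rule out boundary segments, and is most delicate in type I, where every good representative creates a cyclic $\m\m$-boundary that must be reconciled with the $\m$-appending convention in the definition of $\bar{F}_\varphi$.
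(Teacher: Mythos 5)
Your argument is correct in outline, but it takes a genuinely different route from the paper. The paper's proof is purely combinatorial and avoids any length computation: it observes (i) that $\bar{F}_\varphi$ sends good representatives to good representatives and satisfies $\bar{F}_\varphi^2(w)=w$ (by the same overlap-condition argument as for $\varphi^2=\id$), so $F_\varphi$ is an involution of $[E^*_\cycle]$; and (ii) that $\bar{F}_\varphi(w^n)=\bar{F}_\varphi(w)^n$, so $F_\varphi$ maps the set of non-primitive classes into itself. An involution mapping a set into itself maps it onto itself, hence $F_\varphi$ preserves $[E^*_\prim]$. Your proof instead runs a contradiction through the infinite words: from $\varphi^2=\id$ you get $\bar{F}_\varphi(q')=u^t$ with $u\in[w]$, and the occurrence-count/length identity forces $tk=1$. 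The arithmetic checks out, and the degenerate type I cases $[\m]$, $[\m v]$ are handled correctly (primitivity of $\m v$ does follow from $S(\m v\m,\m v\m)\subseteq\{1,|\m v\m|\}$). The cost of your route is that the deferred combinatorial lemma --- that for a good representative the number of replacements performed by the linear scan equals the cyclic occurrence count, that these counts are multiplicative under powers, and that no spurious segments are created by the replacements --- carries essentially the same burden as the paper's identity $\bar{F}_\varphi(w^n)=\bar{F}_\varphi(w)^n$, after which you still need the length bookkeeping on top. Indeed, once you have multiplicativity of the counts you effectively have $\bar{F}_\varphi(w^n)=\bar{F}_\varphi(w)^n$, and combining that with the involution property of $F_\varphi$ (which you nearly establish anyway via $\varphi^2=\id$) yields the lemma immediately without any lengths; that shortcut is exactly the paper's proof.
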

\begin{proof}
    Let $w \in [w]$ be a good representative. Then, $\bar{F}_{\varphi}(w)$ is again a good representative of $F_{\varphi}([w])$. Hence, $F^2_{\varphi}([w]) = [\bar{F}^2_{\varphi}(w)]$. Now we can see that $\bar{F}^2_{\varphi}(w)=w$ in the same way as Proposition~\ref{prop:bijective}. By the overlap conditions, we see that $\bar{F}_{\varphi}(w^n)=\bar{F}_{\varphi}(w)^n$. Hence, we have $F_{\varphi}([E^*_\cycle]\setminus[E^*_\prim]) \subseteq [E^*_\cycle]\setminus[E^*_\prim]$. Since $F^2_{\varphi}=\id$, we conclude that $F_{\varphi}([E^*_\cycle]\setminus[E^*_\prim]) = [E^*_\cycle]\setminus[E^*_\prim]$ and $F_{\varphi}([E^*_\prim]) = [E^*_\prim]$. 
\end{proof}

\section{Faithfulness and Transitivity} \label{sec:faithful-transitive}
We are now going to prove that the action $\Out(\cG_E) \acts \Pz M(X_E,\sigma_E)_+$ is a topologically free boundary action. The proof consists of 3 steps: faithfulness of $\Out(\cG_E)\acts [E^*_\prim]$, transitivity of $\Out(\cG_E) \acts [E^*_\prim]$, and strong proximality of $\Out(\cG_E) \acts \Pz M(X_E,\sigma_E)_+$. The first two claims shall be proved in this section.

\subsection{Faithfulness}
In this subsection, we prove that the action $\Out(\cG_E) \acts [E^*_\prim]$ is faithful. 
A homeomorphism $\bar{\varphi} \colon \overline{X}_E \to \overline{X}_E$ is called a \emph{conjugacy} if $\bar{\varphi}\circ\bar{\sigma}_E=\bar{\sigma}_E \circ \bar{\varphi}$. 
The next lemma is a variant of \cite[Theorem~2.5]{BK87}:
\begin{lemma} \label{lem:BoyleKrieger}
    Let $E$ be a strongly connected graph with $E \not\cong S^1_n$ for any $n \geq 1$. 
    Let $E^0 = E^0_0  \sqcup \dots \sqcup E^0_{d-1}$ be the periodic decomposition, and let $\overline{X}_E = \overline{X}_0 \sqcup\dots\sqcup \overline{X}_{d-1}$ be the associated decomposition. 
    Let $\bar{\varphi} \colon \overline{X}_E \to \overline{X}_E$ be a conjugacy. Suppose $\bar{\varphi}(\bar{x}) \in \bar{\sigma}_{E}^{\Zz}(\bar{x})$ for all periodic points $\bar{x} \in \overline{X}_{E}$. Then, there exists a locally constant function $L\colon X_E \to \Zz$ such that $\bar{\varphi}(\bar{x})=\bar{\sigma}^{L(x)}(\bar{x})$ for all $\bar{x} \in \overline{X}_E$, where $x = \pi_E(\bar{x})$. 
\end{lemma}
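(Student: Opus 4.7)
The plan is to deduce the lemma from the Boyle--Krieger theorem on the mixing component of $\overline{X}_E$, then show that the resulting $\Zz$-valued cocycle depends only on the forward tail. The main obstacle is the reduction in Step~1 below: carefully absorbing the cyclic-permutation-of-components part into a power of $\bar{\sigma}_E$ so that Boyle--Krieger applies cleanly on $\overline{X}_0$; the descent to $X_E$ in Step~3 then uses an aperiodicity trick combined with local constancy.

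Step~1 (reduction to the mixing case). Since $\bar{\varphi}$ commutes with $\bar{\sigma}_E$ and each $\overline{X}_i$ is clopen, the continuous function $\bar{x} \mapsto i(\bar{\varphi}(\bar{x})) - i(\bar{x}) \in \Zz/d\Zz$ (where $i(\bar{x})$ is the index of the component containing $\bar{x}$) is $\bar{\sigma}_E$-invariant, hence constant by topological transitivity of $(\overline{X}_E,\bar{\sigma}_E)$; so $\bar{\varphi}(\overline{X}_i) = \overline{X}_{i+c \bmod d}$ for some $c$. Replacing $\bar{\varphi}$ by $\bar{\sigma}_E^{-c} \circ \bar{\varphi}$, I may assume $\bar{\varphi}$ preserves each $\overline{X}_i$, so that $\bar{\varphi}|_{\overline{X}_0}$ is a conjugacy of the mixing SFT $(\overline{X}_0, \bar{\sigma}_E^d)$. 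For any $\bar{\sigma}_E^d$-periodic $\bar{x} \in \overline{X}_0$, the integer $K$ with $\bar{\varphi}(\bar{x}) = \bar{\sigma}_E^K(\bar{x})$ must now lie in $d\Zz$ in order that $\bar{\varphi}(\bar{x})\in\overline{X}_0$, so $\bar{\varphi}|_{\overline{X}_0}$ preserves every $\bar{\sigma}_E^d$-periodic orbit.

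Step~2 (applying Boyle--Krieger). Theorem~2.5 of \cite{BK87}, applied to $\bar{\varphi}|_{\overline{X}_0}$, yields a locally constant $M \colon \overline{X}_0 \to \Zz$ with $\bar{\varphi}(\bar{x}) = (\bar{\sigma}_E^d)^{M(\bar{x})}(\bar{x})$ on $\overline{X}_0$. The equivariance $\bar{\varphi} \bar{\sigma}_E = \bar{\sigma}_E \bar{\varphi}$ implies that for $\bar{y} = \bar{\sigma}_E^i(\bar{x})$ with $\bar{x} \in \overline{X}_0$ one has $\bar{\varphi}(\bar{y}) = (\bar{\sigma}_E^d)^{M(\bar{x})}(\bar{y})$, producing a locally constant extension of $M$ to all of $\overline{X}_E$; undoing the composition with $\bar{\sigma}_E^{-c}$ gives a locally constant $\tilde{L} \colon \overline{X}_E \to \Zz$ with $\bar{\varphi}(\bar{x}) = \bar{\sigma}_E^{\tilde{L}(\bar{x})}(\bar{x})$ for every $\bar{x} \in \overline{X}_E$.

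Step~3 (descent to $X_E$). By Curtis--Hedlund--Lyndon \cite[Theorem~6.2.9]{LM21}, $\bar{\varphi}$ is a sliding block code $\bar{\varphi}(\bar{x})_{[i]} = \Phi(\bar{x}_{[i-N, i+N]})$ for some $N$. For $\bar{x}, \bar{x}' \in \pi_E^{-1}(x)$ with $x$ not eventually periodic, set $a = \tilde{L}(\bar{x})$ and $b = \tilde{L}(\bar{x}')$; for $i \geq N$, the sliding block formula gives $\bar{x}_{[i+a]} = \Phi(\bar{x}_{[i-N, i+N]}) = \Phi(\bar{x}'_{[i-N, i+N]}) = \bar{x}'_{[i+b]}$, and the common forward part then yields $\bar{x}_{[i+a]} = \bar{x}_{[i+b]}$ for all large $i$, forcing $a = b$ by aperiodicity of $x$. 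For arbitrary $x \in X_E$, I choose $\bar{y}, \bar{y}'$ agreeing with $\bar{x}, \bar{x}'$ on a large central window (large enough so that $\tilde{L}(\bar{y}) = \tilde{L}(\bar{x})$ and $\tilde{L}(\bar{y}') = \tilde{L}(\bar{x}')$ by local constancy) but with a common forward tail that is not eventually periodic; the previous case applies to $\bar{y}, \bar{y}'$, transferring the equality to $\bar{x}, \bar{x}'$. Hence $\tilde{L} = L \circ \pi_E$ for a well-defined function $L \colon X_E \to \Zz$, which is locally constant by the same window argument, and satisfies $\bar{\varphi}(\bar{x}) = \bar{\sigma}_E^{L(x)}(\bar{x})$.
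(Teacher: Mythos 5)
Your proof is correct and rests on the same pivot as the paper's: reduce to the Boyle--Krieger theorem on the primitive component $(\overline{X}_0,\bar{\sigma}_E^d)$. But the way you handle the periodic decomposition is genuinely different and, in fact, cleaner. The paper keeps track of which component $\bar{\varphi}$ sends each piece of $\overline{X}_0$ to via the sets $\overline{Y}_i=\overline{X}_0\cap\bar{\varphi}^{-1}(\overline{X}_i)$, proves $\bar{\sigma}^{-i}(\bar{\varphi}(\overline{Y}_i))=\overline{Y}_i$ using density of periodic points, and then shows the resulting index function $n(\bar{x})$ depends only on the forward ray by an openness/$\bar{\sigma}_E^d$-invariance argument. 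You instead observe at the outset that $\bar{x}\mapsto i(\bar{\varphi}(\bar{x}))-i(\bar{x})$ is a continuous $\bar{\sigma}_E$-invariant $\Zz/d\Zz$-valued function, hence constant by topological transitivity of the irreducible two-sided shift; this collapses all the $\overline{Y}_i$ bookkeeping (in the paper's proof only one $\overline{Y}_i$ is actually nonempty) and, since Boyle--Krieger returns a single power $\bar{\sigma}_E^{dr}$ rather than merely a locally constant $M$, it yields the stronger conclusion that $\bar{\varphi}=\bar{\sigma}_E^{c+dr}$ globally, i.e.\ $L$ is constant. Your Step~3 (the Curtis--Hedlund--Lyndon sliding-block plus aperiodicity argument for descending $\tilde{L}$ to $X_E$) is correct but therefore redundant; it would only be needed if the cocycle could genuinely vary, which your Step~1 already rules out. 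The only imprecision worth flagging is attributing a ``locally constant $M$'' to Boyle--Krieger --- the theorem gives a constant, and your well-definedness of the extension to $\bar{\sigma}_E^i(\overline{X}_0)$ silently uses this.
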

\begin{proof}
    Since $\varphi$ commutes with the shift, it suffices to show that there is a locally constant function $L \colon X_0 \to \Zz$ such that 
    $\bar{\varphi}(\bar{x})=\bar{\sigma}^{L(x)}(\bar{x})$ for all $\bar{x} \in \overline{X}_0$, where $X_0 = \pi_E(\overline{X}_0)$ and $x = \pi_E(\bar{x})$. 

    Let $\overline{Y}_i := \overline{X}_0 \cap \bar{\varphi}^{-1}(\overline{X}_i)$. Then, we have $\overline{X}_0 = \overline{Y}_0 \sqcup \dots \sqcup \overline{Y}_{d-1}$ and $\bar{\varphi}(\overline{Y}_i) \subseteq \ol{X}_i$. In addition, $\overline{Y}_i$ is $\bar{\sigma}_E^{d}$-invariant. Define $n(\bar{x}) \equiv i$ on $\overline{Y}_i$. 
    We claim that $\bar{\sigma}^{-i}(\bar{\varphi}(\overline{Y}_i)) = \overline{Y}_i$. Let $\bar{x} \in \overline{Y}_i$ be a periodic point. Then, since $\bar{x}$ and $\bar{\varphi}(\bar{x})$ belongs to the same orbit of $\sigma$, we have 
    $\bar{\varphi}(\bar{x}) = \bar{\sigma}_E^{i+dk}(\bar{x})$ for some $k$. 
    Hence, we have $\bar{\sigma}^{-i}(\bar{\varphi}(\bar{x})) \in \overline{Y}_i$ by the $\bar{\sigma}^d_E$-invariance of $\ol{Y}_i$. Hence, $\bar{\sigma}^{-i}(\bar{\varphi}(\overline{Y}_i)) \subseteq \overline{Y}_i$ by denseness of periodic points. 
    Conversely, let $\bar{y} \in \overline{Y}_i$ be a periodic point, and let $\bar{x} = \bar{\varphi}^{-1}(\sigma^{i}(\bar{y}))$. 
    Then, since $\bar{\varphi}(\bar{y}) = \bar{\sigma}_E^{i+dk}(\bar{y})$ for some $k$, 
    we have $\bar{\varphi}^{-1}(\bar{y}) = \bar{\sigma}_E^{-i-dk}(\bar{y})$, so that 
    $\bar{x} = \bar{\sigma}_E^{-dk}(\bar{y}) \in \overline{Y}_i$ by the $\bar{\sigma}^d_E$-invariance of $\ol{Y}_i$. 
    Since, $\bar{\sigma}^{-i}\bar{\varphi}(\bar{x})=\bar{y}$, we have $\bar{\sigma}^{-i}(\bar{\varphi}(\overline{Y}_i)) \supseteq \overline{Y}_i$. 
    Now we have proved the claim. Let $\bar{\psi}(\bar{x}) = \bar{\sigma}^{-n(\bar{x})}(\bar{\varphi}(\bar{x}))$ for $\bar{x} \in \overline{X}_0$. Then, $\bar{\psi}$ is a homeomorphism from $\overline{X}_0$ to $X_0$ which commutes with $\bar{\sigma}^d_E$. 
    Then, there exists $r \in \Zz$ such that $\bar{\psi}=\bar{\sigma}_E^{dr}$ on $\ol{X}_0$ by \cite[Theorem~2.5]{BK87}. Therefore, $\bar{\varphi}(\bar{x}) = \bar{\sigma}^{L(\bar{x})}(\bar{x})$ for $\bar{x} \in \overline{X}_0$, where $L(\bar{x}) = n(\bar{x}) + dr$. 
    
    It remains to show that $L$ factors through $\pi_E$. Let $\bar{x} \in \overline{Y}_i,\bar{y} \in \overline{Y}_j$ such that $\bar{x}_{[0,\infty)}=\bar{y}_{[0,\infty)}$. 
    Since all $\overline{Y}_i$ are open, $\bar{\sigma}^{Nd}_E(\bar{x})$ and $\bar{\sigma}^{Nd}_E(\bar{y})$ belong to the same $\overline{Y}_k$ for large $N$. By the $\bar{\sigma}^d_E$-invariance of $\overline{Y}_k$, we have $k=i=j$. Hence, $L(\bar{x})=L(\bar{y})$ by definition of the map $n$. 
\end{proof}

Following Matsumoto \cite[Section~10.2]{MatBook}  
we say that a COE is a \emph{strong COE} if the induced map on the cohomology group $\H^E$ preserves the canonical order unit, i.e., if $\overline{\Psi}_\varphi([1_{X_E}])=[1_{X_E}]$. Note that $\Psi_\varphi(1_{X_E})=l-k$, where $(k,l)$ is any $\varphi$-cocycle pair.

We shall need Matsumoto's construction of a two-sided conjugacy from a strong COE, which is given in the proof of \cite[Proposition~10.2.10]{MatBook}. 

Suppose we have a strongly continuous orbit equivalence $\varphi\colon X_E\to X_E$ with $\varphi$-cocycle pair $(k,l)$. This means that we can write $c\coloneqq l-k=1+b-b\circ\sigma_E$ for some continuous function $b\colon X_E\to\Nz$, 
see \cite[Lemma~10.2.4]{MatBook} and \cite[Definition~10.2.1]{MatBook}. 
From the proof of 
\cite[Proposition~10.2.10]{MatBook}, 
we obtain a conjugacy $\bar{\varphi}\colon\ol{X}_E\to\ol{X}_E$ such that 
 \[
 \bar{\varphi}(\bar{x})_{[j,\infty)}=\sigma_E^{b(\bar{x}_{[j,\infty)})}(\varphi(\bar{x}_{[j,\infty)}))
 \]
for all $\bar{x}\in \ol{X}_E$ and $j\in\Zz$, where 
we regard $\bar{x}_{[j,\infty)} \in X_E$ as a one-sided infinite sequence starting from $j$-th entry of $\bar{x}$.

\begin{proposition}
\label{prop:preservesorbits->inner}
Let $E$ be a strongly connected graph with $E \not\cong S^1_n$ for any $n \geq 1$. 
If $\varphi \in \COE(X_E,\sigma_E)$ satisfies  $\varphi([x]_\tail)=[x]_\tail$ for all $x \in X^{ep}$, then $\varphi$ is inner. 
\end{proposition}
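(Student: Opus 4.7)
My plan is to combine Matsumoto's construction of a two-sided conjugacy from a strong COE with the Boyle--Krieger-type rigidity of Lemma~\ref{lem:BoyleKrieger}, and then invoke Proposition~\ref{prop:inner2} to recognise $\varphi$ as inner. Before running this machine I first need to show that $\varphi$ is actually a strong COE.

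The first step is to promote the tail-preservation hypothesis to strongness. Under the bijection $[E_\prim^*] \leftrightarrow \{[x]_\tail : x \in X_E^{ep}\}$, the hypothesis becomes $\varphi[p]=[p]$ for every primitive cycle $p$, so Proposition~\ref{prop:varphieta[p]} yields $\varphi(\eta_{[p]}) = \eta_{[p]}$ for all $p$. Pairing with $1_{X_E}$ gives $\eta_{[p]}((l-k) - 1) = 0$ for every primitive $p$, and density of the normalised periodic measures (Proposition~\ref{prop:periodicmeasures}) promotes this to $\int((l-k) - 1)\,d\mu = 0$ for every $\mu \in P(X_E,\sigma_E)$. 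A Livsic-type argument for strongly connected one-sided SFTs (reducing to the mixing case via the periodic decomposition of Section~\ref{ssec:SFT}) then produces a continuous function $b \colon X_E \to \Zz$ with $(l-k) - 1 = b - b \circ \sigma_E$, which one may additionally arrange to satisfy $b \geq 0$. In particular $\overline{\Psi}_\varphi[1_{X_E}] = [1_{X_E}]$ in $\H^E$, so $\varphi$ is strong, and the function $b$ just produced is exactly the one fed into Matsumoto's construction.

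The second step applies Matsumoto's construction recalled immediately before the statement of the proposition, yielding a conjugacy $\bar{\varphi}\colon \ol{X}_E \to \ol{X}_E$ with $\bar{\varphi}(\bar{x})_{[j,\infty)} = \sigma_E^{b(\bar{x}_{[j,\infty)})}(\varphi(\bar{x}_{[j,\infty)}))$. For every $\bar{\sigma}_E$-periodic $\bar{x} \in \ol{X}_E$, the projection $x := \pi_E(\bar{x})$ is periodic with the same minimal period as $\bar{x}$; the hypothesis gives $\varphi(x) \in [x]_\tail$, and since tail classes are shift-invariant the displayed formula shows $\pi_E(\bar{\varphi}(\bar{x})) \in [x]_\tail$. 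Because $\bar{\varphi}$ commutes with $\bar{\sigma}_E$, the point $\bar{\varphi}(\bar{x})$ is itself $\bar{\sigma}_E$-periodic with the same period as $\bar{x}$, and two periodic points of $\ol{X}_E$ of the same period that share a one-sided tail necessarily lie in the same $\bar{\sigma}_E$-orbit. Hence the hypothesis of Lemma~\ref{lem:BoyleKrieger} holds, and we obtain a locally constant $L \colon X_E \to \Zz$ with $\bar{\varphi}(\bar{x}) = \bar{\sigma}_E^{L(x)}(\bar{x})$ for every $\bar{x} \in \ol{X}_E$.

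The final step is to extract the required bisection. Comparing one-sided projections in $\bar{\varphi}(\bar{x}) = \bar{\sigma}_E^{L(x)}(\bar{x})$ with Matsumoto's formula gives $\sigma_E^{b(x) + \max(-L(x),0)}(\varphi(x)) = \sigma_E^{\max(L(x),0)}(x)$ for every $x \in X_E$. Consequently the map $b_0 \colon X_E \to \cG_E$ defined by $b_0(x) := (\varphi(x),\, b(x) - L(x),\, x)$ is well defined; it is continuous and open because $b - L$ is locally constant and $\varphi$ is a homeomorphism, and by construction $\s(b_0(x)) = x$, $\r(b_0(x)) = \varphi(x)$. Proposition~\ref{prop:inner2} then yields an inner automorphism $\alpha \in \Inn(\cG_E)$ with $\alpha^0 = \varphi$, so $\varphi$ is inner. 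The hard part I anticipate is the Livsic step in the first paragraph: passing from vanishing of integrals against all invariant measures to an \emph{integer-valued} continuous coboundary representative in $\H^E$ requires a careful integrality argument, handled by the classical theory for mixing SFTs after the periodic decomposition (alternatively, one can lift the identity $\varphi(\eta_{[p]}) = \eta_{[p]}$ to $(\ol{X}_E, \bar{\sigma}_E)$ via $\pi_E$ and quote the two-sided version). Everything else is essentially bookkeeping once $\varphi$ is known to be strong.
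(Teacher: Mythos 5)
Your proposal follows essentially the same route as the paper's proof: show $\varphi$ is a strong COE by testing against periodic measures, pass to Matsumoto's two-sided conjugacy, apply Lemma~\ref{lem:BoyleKrieger} to get the locally constant $L$, and package $(\varphi(x),\,b(x)-L(x),\,x)$ into the bisection required by Proposition~\ref{prop:inner2}. The two local deviations are harmless: the ``Livsic step'' you flag as the hard part is exactly the content of the lemma the paper cites (\cite[Lemma~9.1.3]{MatBook}, i.e.\ periodic measures separate classes in $\H^E$, after which integrality is automatic since $\H^E$ is a quotient of $C(X_E,\Zz)$ and $b\geq 0$ comes from \cite[Lemma~10.2.4]{MatBook}), and your $\max$-splitting at the end simply bypasses the paper's direct verification that $L\geq 0$.
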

\begin{proof}
We first show that $\varphi$ must be a strong COE. For every primitive cycle $p$, we use \cite[Theorem 9.3.11]{MatBook} and Proposition~\ref{prop:varphieta[p]} to obtain
\[
\eta_{[p]}([c])=\eta_{[p]}(\Psi_\varphi[1_{X_E}])=\eta_{\varphi[p]}([1_{X_E}])=\eta_{[p]}([1_{X_E}]),
\]
so that $\eta_{[p]}([c]-[1_{X_E}])=0$. It now follows from \cite[Lemma~9.1.3]{MatBook} that $[c]-[1_{X_E}]=0$. This shows that $\varphi$ is a strong COE.

Let $\bar{\varphi}\colon\ol{X}_E\to\ol{X}_E$ be the conjugacy constructed in the paragraphs preceding this proposition. We will show that $\bar{\varphi}(\bar{x})\in \ol{\sigma}_E^\Zz(\bar{x})$ for every periodic point $\bar{x}\in \ol{X}_E$. 
Such a point can be written as $\bar{x}=p^{\pm\infty}=...ppp...$ with $(p^{\pm\infty})_{[0,|p|-1]}=p$, where $p \in E^*_\prim$ is a primitive cycle. 
Since $\bar{\varphi}$ is a conjugacy, we have $\bar{\varphi}(p^{\pm\infty})=q^{\pm\infty}$, where $q \in E^*_\prim$ with $|p|=|q|$. 
This is because the orbits of $(\ol{X}_E,\bar{\sigma}_E)$ of cardinality $n$ are precisely the orbits of $r^{\pm \infty}$ with $r \in E^*_\prim$ and $|r|=n$. 
By our assumption on $\varphi$, we have $\varphi(p^\infty)=wp^\infty$ for some finite word $w\in E^*\cup\{o\}$. Thus, we have
\[
q^\infty=
(q^{\pm\infty})_{[0,\infty)}=(\bar{\varphi}(p^{\pm\infty}))_{[0,\infty)}=\sigma_E^{b(\bar{x}_{[0,\infty)})}(\varphi(\bar{x}_{[0,\infty)}))=\sigma_E^{b(p^\infty)}(wp^\infty),
\]
which implies that $[q] = [p]$ in $[E^*_\prim]$, so that $\bar{\varphi}(\bar{x})\in \ol{\sigma}_E^\Zz(\bar{x})$. 
By Lemma~\ref{lem:BoyleKrieger}, we choose a locally constant function $L\colon X_E \to \Zz$ such that $\bar{\varphi}(\bar{x})=\bar{\sigma}^{L(x)}(\bar{x})$ for all $\bar{x} \in \overline{X}_E$, where $x = \pi_E(\bar{x})$.

We show that $L$ must be nonnegative. For all $\bar{x}\in\ol{X}_E$ with $x=\pi_E(\bar{x})$, we have 
\begin{equation}
\label{eqn:pospower}
    \bar{x}_{[L(x),\infty)}=\bar{\sigma}_E^{L(x)}(\bar{x})_{[0,\infty)}
    =\bar{\varphi}(\bar{x})_{[0,\infty)}
    =\sigma_E^{b(x)}(\varphi(x)).
\end{equation}
Since the right-hand side of \eqref{eqn:pospower} only depends on $x=\bar{x}_{[0,\infty)}$, this forces $L(x)\geq 0$ for all $x \in X_E$.

Fix $x \in X_E$, and let $\bar{x} \in \ol{X}_E$ be an element such that $\bar{x}_{[0,\infty)} = x$. We have 
\[ \sigma_E^{L(x)}(x) = \bar{\sigma}_E^{L(x)}(\bar{x})_{[0,\infty)} = \bar{\varphi}(\bar{x})_{[0,\infty)} = \sigma_E^{b(x)}(\varphi(x)), \]
which means that $(\varphi(x), b(x)-L(x), x) \in \cG_E$. Then, the map
\[ X_E \to \cG_E,\ x \mapsto (\varphi(x), b(x)-L(x), x)\]
satisfies the condition in Proposition~\ref{prop:inner2}. Hence, $\varphi$ is inner. 
\end{proof}

\begin{corollary}
\label{cor:faithful-on-ep}
The actions $\Out(\cG_E)\acts [E_\prim^*]$ and $\Out(\cG_E)\acts (\H^E,\H^E_+)$ are faithful.
\end{corollary}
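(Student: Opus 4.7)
The plan is to deduce both faithfulness statements from Proposition~\ref{prop:preservesorbits->inner}, passing from the cohomology action to the action on primitive cycles via periodic measures.

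First I would handle the action on $[E^*_\prim]$. Take $\alpha \in \Aut(\cG_E)$ whose class in $\Out(\cG_E)$ acts trivially on $[E^*_\prim]$, and set $\varphi := \alpha^0 \in \COE(X_E,\sigma_E)$ using Proposition~\ref{prop:Aut=COE}. Then $\varphi[p]=[p]$ for every $[p]\in [E^*_\prim]$. Under the bijection $[E^*_\prim]\to \{[x]_\tail : x\in X^{ep}_E\}$, $[p]\mapsto [p^\infty]_\tail$, this means $\varphi([p^\infty]_\tail)=[p^\infty]_\tail$ for each primitive cycle $p$. Since every $x\in X^{ep}_E$ can be written as $wp^\infty$ and thus satisfies $[x]_\tail = [p^\infty]_\tail$ for some $[p]\in [E^*_\prim]$, we conclude $\varphi([x]_\tail)=[x]_\tail$ for all $x\in X^{ep}_E$. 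Proposition~\ref{prop:preservesorbits->inner} then gives that $\varphi$ is inner, so $\alpha\in \Inn(\cG_E)$.

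Second, for the action on $(\H^E,\H^E_+)$, I would reduce to the preceding case by looking at periodic measures. Suppose $[\alpha]$ acts trivially on $\H^E$, and set $\varphi := \alpha^0$. Since $M(X_E,\sigma_E)_+$ is identified with the cone of positive homomorphisms of $\H^E$ and the action on measures is dual to the action on $\H^E$, triviality on $\H^E$ forces $\alpha(\mu)=\mu$ for every $\mu\in M(X_E,\sigma_E)_+$; concretely, if $[\Psi_\varphi(f)]=[f]$ in $\H^E$, then $\Psi_\varphi(f)-f$ is a coboundary and hence integrates to zero against any shift-invariant measure. In particular $\alpha(\eta_{[p]})=\eta_{[p]}$ for every primitive cycle $p$. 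By Proposition~\ref{prop:varphieta[p]}, this gives $\eta_{\alpha[p]}=\eta_{[p]}$. Since the support of $\eta_{[p]}$ is the finite $\sigma_E$-orbit $\{\sigma_E^i(p^\infty) : 0\le i<|p|\}$ and two primitive cycles yield the same such orbit precisely when they are cyclic permutations of each other, the map $[p]\mapsto\eta_{[p]}$ is injective. Therefore $\alpha[p]=[p]$ for every $[p]\in [E^*_\prim]$, and the first part of the argument completes the proof.

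The main obstacle has essentially already been cleared by Proposition~\ref{prop:preservesorbits->inner}, whose proof invokes Matsumoto's strong-COE machinery and the Boyle--Krieger-type Lemma~\ref{lem:BoyleKrieger}; and by the cohomological computation in Proposition~\ref{prop:varphieta[p]}. What remains is the routine unwinding of definitions outlined above, the only mildly substantive points being the duality between the $\H^E$-action and the $M(X_E,\sigma_E)_+$-action, and the elementary injectivity of $[p]\mapsto \eta_{[p]}$.
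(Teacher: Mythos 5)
Your proposal is correct and follows essentially the same route as the paper: the first statement is an immediate unwinding of Proposition~\ref{prop:preservesorbits->inner}, and the second is reduced to the first by noting that triviality on $\H^E$ fixes all periodic measures, which via Proposition~\ref{prop:varphieta[p]} (and injectivity of $[p]\mapsto\eta_{[p]}$) forces triviality on $[E^*_\prim]$. Your write-up merely makes explicit the duality and injectivity steps that the paper leaves implicit.
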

\begin{proof}
The first statement is immediate from Proposition~\ref{prop:preservesorbits->inner}.
Suppose $\alpha\in\Aut(\cG_E)$ satisfies $\ol{\Psi}_{\alpha^0}=\id$. 
Then, $\alpha$ fixes all periodic measures, so that $\alpha$ acts trivially on $[E^*_\prim]$ by Proposition~\ref{prop:varphieta[p]}. Hence, $\alpha$ is inner by the first statement. 
\end{proof}

\subsection{Transitivity}

In this section, we prove that the action $\Out(\cG_E) \acts [E^*_\prim]$ is transitive. We fix a strongly connected graph $E$. 

\begin{definition}
    For a cycle $p = p_1 \dots p_n$, where $p_i$ is a prime cycle, let $p[i,j] = p_i \dots p_j$. In addition, let $\ell(p) = n$, which is called the \emph{prime-path-length} of $p$.
\end{definition}

In this section, when we check the primitivity of a given cycle $p$, we can use the prime cycle interval $p[i,j]$ instead of the letter-wise interval $p_{[i,j]}$. This because, if $p = (p'_{[0,k]})^n$ for some $k \geq 0$, $n \geq 2$, then $p'$ is a cycle at $s(p)$, which implies that $p'_{[0,k]}=p[0,l]$ for some $l \geq 0$. Similarly, when we check the overlap conditions for two words $\m d \m$ and $\m d' \m$, where the marker $\m$ is a cycle, we can use the prime cycle intervals. Indeed, suppose we have $(\m d \m)_{[0,k-1]}=(\m d' \m)_{[n-k, n-1]}$, where $n=|\m d' \m|$. Then, we have $s((\m d' \m)_{[n-k]}) = s((\m d \m)_{[0]})=s(\m)$ and $r((\m d \m)_{[k-1]}) = r((\m d' \m)_{[n-1]})=r(\m)$, so that $(\m d \m)_{[0,k-1]} = (\m d \m)[0,l-1]$ and $(\m d' \m)_{[n-k, n-1]} = (\m d' \m)[N-l',N-1]$, where $N=\ell(\m d'\m)$ and $l,l' \geq 0$. In addition, we have $l=l'$ because $(\m d \m)[0,l-1]=(\m d' \m)[N-l',N-1]$ implies that the prime-path-lengths of both sides are the same. 

\begin{lemma} \label{lem:general-cyclic}
    Let $p \in E^*_\cycle$ be a cycle, and let $N = \ell(p)$. Let $1 \leq k \leq N-1$. If we have 
    \[ p[k,N-1]p[0,k-2] = p[0,N-2], \]
    then $p$ is not primitive. Moreover, the least period of $p$ divides ${\rm GCD}(k,N)$. 
\end{lemma}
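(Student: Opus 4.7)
The plan is to reformulate the hypothesis as a system of equalities among the prime cycles $p_0, \ldots, p_{N-1}$, and then argue that this system forces the sequence $(p_i)$ to be periodic with period dividing $\gcd(k,N)$. The key observation is that every prime $p_i$ appearing in the decomposition of $p$ is itself a cycle at $v := s(p)$, so both sides of the equation $p[k,N-1]p[0,k-2] = p[0,N-2]$ are cycles at $v$; uniqueness of the prime-cycle factorization at $v$ then lets me compare the two sides prime cycle by prime cycle.

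First I would read off from the hypothesis the two families of equalities
\begin{equation*}
p_{i+k} = p_i \quad (0 \le i \le N-k-1), \qquad p_{i+N-k} = p_i \quad (0 \le i \le k-2),
\end{equation*}
obtained by matching the ordered lists $(p_k, p_{k+1}, \ldots, p_{N-1}, p_0, \ldots, p_{k-2})$ and $(p_0, p_1, \ldots, p_{N-2})$. Both families are instances of the single relation $p_i = p_{\tau(i)}$ where $\tau\colon \Zz/N\Zz \to \Zz/N\Zz$ is translation by $k$, and a direct index count shows that together they cover every $i \in \{0, 1, \ldots, N-2\}$. The only index left uncovered is $i = N-1$.

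The main (and only nontrivial) step is recovering the relation at $N-1$. For this I would trace the $\tau$-orbit $N-1 = i_0, i_1, \ldots, i_{\ell-1}$, which has length $\ell = N/\gcd(k,N)$; since $k \le N-1$ one has $\gcd(k,N) < N$ and therefore $\ell \ge 2$. The intermediate indices $i_1, \ldots, i_{\ell-1}$ all lie in $\{0, \ldots, N-2\}$, so the relations already established give $p_{i_j} = p_{\tau(i_j)} = p_{i_{j+1}}$ along the orbit. Chaining these equalities produces
\begin{equation*}
p_{N-1} = p_{i_0} = p_{i_{\ell-1}} = \cdots = p_{i_1} = p_{\tau(N-1)},
\end{equation*}
which closes the orbit and extends the relation $p_i = p_{\tau(i)}$ to all of $\Zz/N\Zz$.

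Once that holds, the sequence $(p_i)$ is constant on each coset of the subgroup generated by $k$ in $\Zz/N\Zz$, which equals $d\,\Zz/N\Zz$ with $d = \gcd(k,N)$. Consequently $p_i = p_{i+d}$ for all $i$, so $p = (p_0 p_1 \cdots p_{d-1})^{N/d}$ with $N/d \ge 2$, which shows that $p$ is not primitive and that its least period divides $d = \gcd(k,N)$. The only delicate point in the whole argument is the short orbit-chase used to supply the missing relation at $N-1$; the rest is bookkeeping built on the uniqueness of the prime factorization at $v$.
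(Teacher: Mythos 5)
Your proof is correct and takes essentially the same route as the paper's: both reduce the hypothesis, via uniqueness of the prime-cycle decomposition, to the relations $p_i=p_{i+k}$ for all $i\neq N-1$ in $\Zz/N\Zz$, recover the missing relation at $N-1$ by chasing the orbit of translation by $k$ (the paper chains $p_{k-1}=p_{2k-1}=\cdots=p_{mk-1}=p_{N-1}$ with $mk=\mathrm{LCM}(k,N)$, which is exactly your orbit of length $N/\gcd(k,N)$), and conclude that $(p_i)$ is constant on cosets of $\gcd(k,N)\,\Zz/N\Zz$, so $p$ is a proper power with least period dividing $\gcd(k,N)$. No gaps; the only difference is cosmetic (you start the orbit-chase at $N-1$ rather than at $k-1$).
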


Note that $p[0,k-2]$ is the empty path if $k=1$. For a cycle $p \in E^*_\cycle$, if we have $p=q^n$ for some $q \in E^*_\prim$, then we say that the least period of $p$ is equal to $\ell(q)$. 

\begin{proof}
    Let $p=p_0\dots p_{N-1}$ be the prime cycle decomposition. 
    By comparing prime factors, the assumption is equivalent to 
    \begin{equation} \label{eqn:cyclicpermut2}
    \begin{cases}
        p_i = p_{i+k} & (0 \leq i \leq N-k-1), \\
        p_i = p_{i+N-k} & (0 \leq i \leq k-2). \\
    \end{cases}        
    \end{equation}
    We consider indices as elements of $\Zz/N\Zz$. 
    Then, Equation~\eqref{eqn:cyclicpermut2} is equivalent to 
    \begin{equation} \label{eqn:cyclicpermut3}
        p_i = p_{i+k} \ (0 \leq i \leq N-2).
    \end{equation} 
    We define an equivalence relation on $\Zz/N\Zz$ by $i \sim j$ if and only if $p_i=p_j$. 
    We first show that the equivalence relation $\sim$ is finer than the orbit equivalence relation of the action $\Zz \acts \Zz/N\Zz$ defined by $x \mapsto x+k$. 
    To see this, it suffices to show that the equation $p_{k-1}=p_{N-1}$ is obtained from Equation~\eqref{eqn:cyclicpermut3}. 
    Take $m \geq 2$ such that $mk = {\rm LCM}(k,N)$, because $k <N$. Then, $k-1, 2k-1, \dots, (m-1)k-1 \in \Zz/N\Zz$ are distinct elements, and none of them is congruent to $N-1$. Hence, for $1 \leq l \leq m-1$, the equation $p_{lk-1}=p_{(l+1)k-1}$ is the case $i\equiv lk-1$ in Equation~\eqref{eqn:cyclicpermut3}.
    Since $p_{mk-1}=p_{N-1}$, the claim follows.     
    
    Let $i={\rm GCD}(k, N)$ and $n=N/i$. Then, we have $p=(p_0\dots p_{i-1})^n$ because the subgroup generated by $k$ in $\Zz/N\Zz$ is the same as the subgroup generated by ${\rm GCD}(N,k)$. Since $n>1$, the cycle $p$ is a non-trivial power, and the least period divides $i$.  
\end{proof}

\begin{lemma} \label{lem:prim-transitivity}
    Let $p,q \in E^*_\prim$ be primitive cycles, and suppose  $\ell(q)=\ell(p)+1$ and $q[0,\ell(p)-1] = p$. Then, there exists $\alpha \in \Aut(\cG_E)$ such that $\alpha(p^\infty)=q^\infty$. 
\end{lemma}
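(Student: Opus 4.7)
The plan is to apply the type~I marker COE construction from Section~\ref{ssec:marker} with marker $\m=\m'=p$ and data $D=\{o,c\}$, where $c\coloneqq q[\ell(p),\ell(p)]$ is the unique prime cycle at $s(p)=r(p)$ for which $q=p\cdot c$. Then $\m d\m'=pp$ and $\m d'\m'=pcp$, and the replacement rule swaps $pp$ and $pcp$. Assuming the associated $\varphi$ is a legitimate COE, a direct iteration starting from $y_0=p^\infty\in\cZ(pp)$ produces $w_1=\m d'=pc$ and $y_1=p^\infty$; repeating gives $\varphi(p^\infty)=(pc)^\infty=q^\infty$. Proposition~\ref{prop:Aut=COE} then lifts $\varphi$ to an $\alpha\in\Aut(\cG_E)$ with $\alpha^0=\varphi$, the desired automorphism.

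The technical heart is verifying the overlap conditions of Section~\ref{ssec:marker}. Since $\m=p$ is a cycle, the conventions discussed just before Lemma~\ref{lem:general-cyclic} let us work entirely at the prime-cycle level: lengths are counted in prime cycles, the bound $|\m|$ becomes $N\coloneqq\ell(p)$, $pp$ has length $2N$, and $pcp$ has length $2N+1$. The subword conditions are immediate: a $pp$-sub-prime-path inside $pcp$ forces $c=p_0$ and $p_0=\dots=p_{N-1}$, making either $p$ (if $N\geq 2$) or $q=p^2$ (if $N=1$) a nontrivial power, contradicting primitivity.

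For each of $S(pp,pcp)$, $S(pcp,pp)$, $S(pp,pp)$, and $S(pcp,pcp)$, I would consider any intermediate overlap of prime-cycle length $k=N+j$ with $1\leq j\leq N$, excluding the trivial full overlap in the self-overlap cases. Matching suffix against prefix position by position produces cyclic-shift relations among the $p_i$, together with identifications of $c$ with some $p_i$ whenever $c$ appears in the matched region. In the first three cases these relations combine to $p_i=p_{i+j\bmod N}$ for all $i$, so Lemma~\ref{lem:general-cyclic} forces the least period of $p$ to divide $\gcd(j,N)$; primitivity of $p$ then forces $j=N$, at which point the matching degenerates to $p_0=\dots=p_{N-1}$, contradicting primitivity again. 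For $S(pcp,pcp)$ the shift relations instead live on the $N+1$ prime cycles of $q=p\cdot c$, and the same lemma applied to $q$ forces $(N+1)\mid j$, outside the allowed range $1\leq j\leq N$. The main obstacle is precisely this uniform case analysis: each overlap set requires careful tracking of which prime cycle of $pp$ or $pcp$ sits at each matched position, and extracting the correct shift relations together with the appropriate $c=p_j$ identification before invoking Lemma~\ref{lem:general-cyclic}. Once the four overlap conditions are verified, Proposition~\ref{prop:bijective} and the proposition that follows confirm that $\varphi$ is a bona fide involutive COE, and the iteration above completes the proof.
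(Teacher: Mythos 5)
Your proposal is correct and follows essentially the same route as the paper: marker $\m=\m'=p$, data $D=\{o,p_N\}$ with $p_N=c$ the last prime factor of $q$, verification of the overlap conditions at the prime-cycle level via Proposition~\ref{prop:general-cyclic} and Lemma~\ref{lem:general-cyclic} (the paper derives the cross-term contradictions as non-primitivity of $q$ resp.\ a nontrivial cyclic symmetry of $p$, matching your sketch), and then $\varphi(p^\infty)=(pp_N)^\infty=q^\infty$ by iterating the replacement rule.
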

\begin{proof}
    Put $p=p_0\dots p_{N-1}$ and $q=p_0\dots p_{N-1}p_N$, where $N =\ell(p) \geq 1$. Put $\m = p$, and we show that
    the marker $\m$ and the data set $D=\{ o, p_N\}$ satisfy the overlap conditions. First, we see that 
    \[ S(\m\m, \m\m) \subseteq \{1,\dots,|\m|\} \cup \{|\m\m| \} \text{ and } S(\m p_N\m, \m p_N \m) \subseteq \{1,\dots,|\m|\} \cup \{|\m p_N\m|\}\]
    by Proposition~\ref{prop:general-cyclic}. 
    We show that 
    $S(\m p_N\m, \m\m) \subseteq \{1,\dots,|\m|\}$. Suppose there is an overlap bigger than $|\m|$. Then, there exists $1 \leq k \leq N$ such that 
    \[ (\m \m)[0,2N-k] = (\m p_N\m)[k,2N].\]
    Then, by comparing the first $N$ prime cycles, we have 
    \[ p_0 \dots p_{N-1} = p_k \dots p_{N}p_0\dots p_{k-2}. \]
    By Lemma~\ref{lem:general-cyclic}, $q$ is not primitive, which is a contradiction. 
    Hence, $S(\m p_N\m, \m\m) \subseteq \{1,\dots,|\m|\}$. Similarly, suppose there is an overlap of $\m\m$ with $\m p_N\m$ which is bigger than $|\m|$. Then, there exists $1 \leq k \leq N-1$ such that 
    \[ (\m p_N\m)[0,2N-k-1] = (\m\m)[k,2N-1].\]
    By comparing the first $N$ prime cycles, we have 
    \[ p_0 \dots p_{N-1} = p_k \dots p_{N-1}p_0\dots p_{k-1}, \]
    which contradics Proposition~\ref{prop:general-cyclic}. 
    Hence, we have $S(\m \m, \m p_N \m) \subseteq \{1,\dots,|\m|\}$. 
    Finally, $\m\m$ is not a subword of $\m p_N\m$. Indeed, since $\ell(\m\m)=2N$ and $\ell(\m p_N \m)=2N+1$, if $\m\m$ is a subword of $\m p_N \m$, then $\m \m$ is a prefix or a suffix of $\m p_N\m$, which contradics other overlap conditions.    
    Hence, the type I marker COE $\alpha$ corresponding to $\m$ and $D$ satisfies the desired property.
\end{proof}

\begin{definition}
    Let $p,q \in E^*_\prim$. We say that $q$ is a \emph{primitive extension} of $p$ if $p=q[0,\ell(p)-1]$ and $q[0,k-1]$ is primitive for all $\ell(p) \leq k \leq \ell(q)$. 
\end{definition}

\begin{lemma} \label{lem:Fine--Wilf}
    For any primitive cycle $p \in E^*_\prim$, there is at most one prime cycle $q \in E^*_\cycle$ such that $pq^k$ is not primitive for some $k \geq 1$. 
\end{lemma}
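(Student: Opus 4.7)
My plan is to argue by contradiction. Suppose there exist distinct prime cycles $q_1\neq q_2$ and integers $k_1,k_2\geq 1$ such that both $pq_1^{k_1}$ and $pq_2^{k_2}$ are non-primitive; note that for $pq_i^k$ to form a cycle we must have $q_i$ based at $v=s(p)=r(p)$. Write $pq_i^{k_i}=u_i^{a_i}$ with $u_i\in E^*_\prim$ primitive and $a_i\geq 2$, and put $N=\ell(p)$. Throughout, I work at the level of prime-cycle decompositions, regarding prime cycles as letters in the alphabet of prime cycles based at $v$; as discussed before Lemma~\ref{lem:general-cyclic}, this reduction is faithful with respect to primitivity and overlap computations, since prime cycles are the atoms of cyclic decomposition at a fixed vertex.

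The first step is to pin down the shape of each $u_i$. If $\ell(u_i)>N$, then the second copy of $u_i$ inside $u_i^{a_i}$ lies entirely in the $q_i^{k_i}$-suffix, forcing $u_i=q_i^{\ell(u_i)}$ and contradicting primitivity of $u_i$; hence $\ell(u_i)\leq N$ and $u_i=p[1,\ell(u_i)]$. Writing $N=c_i\ell(u_i)+b_i$ with $0\leq b_i<\ell(u_i)$, a brief case analysis (splitting on whether a full copy of $u_i$ sits inside the $q_i^{k_i}$-suffix, and on whether $b_i=0$) handles the degenerate branches; tracing the $q_i$'s through these special cases yields $q_1=p=q_2$, contradicting $q_1\neq q_2$. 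In the remaining case we obtain $c_i\geq 1$, $a_i=c_i+1$, $b_i\geq 1$, $k_i=\ell(u_i)-b_i$, and, crucially, that the last $k_i$ letters of $u_i$ are all equal to $q_i$. Taking without loss of generality $\ell(u_1)\leq\ell(u_2)$, the equality case would yield $u_1=u_2$ as prefixes of $p$ of equal length, and matching $q_i$-tails would then force $q_1=q_2$; so $\ell(u_1)<\ell(u_2)$.

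The heart of the argument is a two-period collision. Since $u_1$ ends in $q_1$, we have $p[\ell(u_1)]=u_1[\ell(u_1)]=q_1$; and since $\ell(u_1)<\ell(u_2)$ and $u_2=p[1,\ell(u_2)]$, also $u_2[\ell(u_1)]=q_1$. The $q_2$-tail of $u_2$ occupies positions $[b_2+1,\ell(u_2)]$, so if $\ell(u_1)$ lay in that range we would obtain $q_1=q_2$; hence $\ell(u_1)\leq b_2=N-c_2\ell(u_2)$, yielding $\ell(u_1)+\ell(u_2)\leq N-(c_2-1)\ell(u_2)\leq N$ because $c_2\geq 1$. Now $p$ has periods $\ell(u_1)$ and $\ell(u_2)$, being a prefix of both $u_1^\infty$ and $u_2^\infty$, and the position $j=\ell(u_1)+\ell(u_2)$ lies in $[1,N]$. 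Shifting back by $\ell(u_1)$ gives $p[j]=p[\ell(u_2)]=q_2$, while shifting back by $\ell(u_2)$ gives $p[j]=p[\ell(u_1)]=q_1$; therefore $q_1=q_2$, the desired contradiction. The main obstacle is the bookkeeping in the second step, namely locating the $q_i$-tails inside $u_i$ and discarding the degenerate branches cleanly; once that is set up, the final collision is a hands-on Fine--Wilf-type argument that avoids a direct appeal to the Fine--Wilf theorem.
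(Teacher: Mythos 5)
Your proof is correct, but it takes a genuinely different route from the paper. The paper's proof is a two-line reduction: it observes that if the prime cycle $q$ at $s(p)$ does not occur in the prime decomposition of $p$, then $pq^k$ is automatically primitive, and then invokes Rytter's Unary-Extension Lemma (\cite[Lemma 2.5]{Rytter16}) over the alphabet of prime cycles occurring in $p$. You instead prove the needed combinatorial fact from scratch: after the same faithful passage to the prime-cycle alphabet, you pin down each root $u_i$ as a prefix $p[1,\ell(u_i)]$ of $p$ whose tail consists of copies of $q_i$, and then run a hand-made Fine--Wilf collision using the two periods $\ell(u_1),\ell(u_2)$ of $p$ at the position $j=\ell(u_1)+\ell(u_2)\leq N$ to force $q_1=q_2$. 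I checked the key steps: the bound $\ell(u_1)\leq b_2$ (hence $\ell(u_1)+\ell(u_2)\leq N$) is right, and the two period-shifts at $j$ do land on $p[\ell(u_2)]=q_2$ and $p[\ell(u_1)]=q_1$ respectively. The only place where the write-up is thinner than it should be is the degenerate branch: the case analysis for a single index $i$ yields $q_i=p$ and $\ell(p)=1$, and one then needs the extra (easy) observation that $\ell(p)=1$ forces the \emph{other} index into the same degenerate branch before concluding $q_1=p=q_2$; as phrased, "tracing the $q_i$'s" elides this, but it is a one-line fix and not a gap in substance. The trade-off between the two approaches is the usual one: the paper's argument is shorter but outsources the combinatorics to a cited lemma, while yours is self-contained and makes the mechanism (a two-period interference inside $p$) visible, at the cost of some bookkeeping.
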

\begin{proof}
    Let $A$ be the set of prime cycles which appear in the prime decomposition of $p$. If $q \in \E^*_\cycle$ is a prime cycle at $s(p)$ such that $q \not\in A$, then $pq^k$ is obviously primitive for all $k\geq 1$. 
    Hence, the claim follows from 
    Unary-Extension Lemma by Rytter (\cite[Lemma 2.5]{Rytter16}, see also \cite[p.286]{CLR21}) applied to the alphabet set $A$. 
\end{proof}

\begin{lemma} \label{lem:4cycles}
    The following hold:  
    \begin{enumerate}
        \item For every primitive cycle $p \in E^*_\cycle$, there exists a primitive extension $q \in E^*_\prim$ of $p$ such that $q$ goes through all edges of $E$.
        \item For primitive cycles $p,q \in E^*_\prim$ with $s(p)=s(q)$, there exists $[r] \in [E^*_\prim]$ such that $[r]$ contains a primitive extension of $p$ and a primitive extension of $q$. 
    \end{enumerate}
\end{lemma}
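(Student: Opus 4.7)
I will use Lemma~\ref{lem:Fine--Wilf} iteratively: at any stage of building up a primitive cycle, at most one prime cycle cannot be freely appended, so having at least two prime cycles at $v = s(p)$ gives enough flexibility. For both parts, I extend $p$ (and, in (ii), also $q$) by appending carefully chosen prime cycles at $v$ one at a time, verifying primitivity of intermediate prefixes along the way.

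For (i), I first dispose of the degenerate case where $v$ admits a unique prime cycle $c$: primitivity of $p$ forces $p=c$, and strong connectivity of $E$ forces $c$ to visit every edge (an unvisited edge $e$ would lift, via paths $v \to s(e)$ and $r(e) \to v$, to a cycle at $v$ whose prime-cycle decomposition would contain a second prime cycle at $v$). Otherwise $v$ has at least two prime cycles, and the same strong-connectivity argument shows that every edge of $E$ lies in some prime cycle at $v$. Starting from $p$, I first append a prime cycle $\neq p$ if $\ell(p)=1$, to reach prime-path-length at least $2$; then for each missing edge $e$, I append a prime cycle containing $e$, inserting at most one buffer prime cycle when needed to dodge the unique bad prime cycle flagged by Lemma~\ref{lem:Fine--Wilf}. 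The result is a primitive extension of $p$ visiting every edge of $E$.

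For (ii), the case $p=q$ reduces to (i). Otherwise I construct $r = p \cdot \alpha \cdot q \cdot \beta$, with $\alpha,\beta$ short words of prime cycles at $v$, and require that $r$ is primitive, that every prefix of $r$ of length $\geq \ell(p)$ is primitive (so $r$ itself is a primitive extension of $p$), and that every prefix of the cyclic rotation of $r$ beginning with $q$ of length $\geq \ell(q)$ is primitive (so that rotation is a primitive extension of $q$). When $\ell(p),\ell(q) \geq 2$ I take $\alpha = \beta = o$: a short prime-path-length comparison shows $pq$ is primitive iff $p \neq q$, and the two chains of intermediate-prefix primitivity follow from repeated application of Lemma~\ref{lem:Fine--Wilf}. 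When $\ell(p)=1$ or $\ell(q)=1$, I pad $\alpha$ or $\beta$ with a prime cycle distinct from $p$ or $q$, which exists because $v$ has at least two prime cycles (otherwise $p=q$ by (i)). The main obstacle is the corner case $\ell(p)=\ell(q)=1$ with exactly two prime cycles at $v$, where naive padding may force $r$ to be a proper power; a finite case analysis, lengthening $\alpha$ by one or two further prime cycles, breaks any emerging period.
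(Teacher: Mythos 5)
Part (i) of your construction is essentially sound, but your justification is weaker than it needs to be: Lemma~\ref{lem:Fine--Wilf} only says that at most one prime cycle is bad for the \emph{current} word, so after inserting a buffer the forced prime cycle through the missing edge could a priori be the unique bad prime cycle for the \emph{new} word as well, and ``at most one buffer'' is not justified by that lemma alone. The paper avoids this entirely with a direct observation: if $c$ is a prime cycle at $s(p)$ through an edge $e$ not occurring in the primitive word $w$, then $wc$ is automatically primitive, since a nontrivial period would force $c$ to equal a prime factor of $w$, impossible because $w$ does not contain $e$. With that observation no buffers and no appeal to Lemma~\ref{lem:Fine--Wilf} are needed in (i).

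Part (ii) has a genuine gap. Your two central claims in the main case $\ell(p),\ell(q)\geq 2$ --- that $pq$ is primitive iff $p\neq q$, and that primitivity of the intermediate prefixes ``follows from repeated application of Lemma~\ref{lem:Fine--Wilf}'' --- are both false. In $R_2$ with loops $a,b$ (where the prime cycles are exactly the loops, so $\ell(\cdot)=|\cdot|$), take $p=aaba$ and $q=ab$: both are primitive, $p\neq q$, $\ell(p),\ell(q)\geq 2$, yet $pq=aabaab=(aab)^2$ and $qp=abaaba=(aba)^2$. Even when $pq$ is primitive the intermediate prefixes can fail: for $p=aab$ and $q=aabbbb$ the prefix $p\,q_{[0,2]}=(aab)^2$ is not primitive although $pq$ is, so $pq$ is not a primitive extension of $p$. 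The underlying error is that Lemma~\ref{lem:Fine--Wilf} only controls appending a \emph{freely chosen} prime cycle and its powers; along the chain $p,\,pq_1,\,pq_1q_2,\dots$ the appended prime cycles are dictated by $q$, so the lemma gives no information. This is exactly why the paper's proof (a) first reduces to the case where $[q]$ contains no primitive extension of $p$ (in the first example above, $[aaba]$ contains $abaa$, a primitive extension of $ab$, so the reduction is doing real work), and (b) separates $p$ and $q$ by \emph{long} runs $s^m$ and $t^n$ of single safe prime cycles supplied by Lemma~\ref{lem:Fine--Wilf}, with $n>2\ell(q)$ and $m=n+\ell(q)-\ell(p)$ tuned so that any period of a non-primitive prefix is forced to be exactly half its prime-path-length; the resulting equation $ps^A=s^Bqt^C$ then contradicts the reduction in (a). Neither ingredient appears in your sketch, and short padding words $\alpha,\beta$ cannot substitute for them.
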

\begin{proof}
    We show (1). 
    If $p$ goes through all edges, then $q=p$ satisfies the desired property. Suppose not, and let $e$ be an edge which does not appear in $p$. Choose $w_1,w_2 \in E^1$ such that $s(w_1)=r(w_2)=s(p)$, $r(w_1)=s(e)$, $s(w_2)=r(e)$, and $w_1,w_2$ do not go through $s(p)$ at any intermediate position. Then, $p_1=w_1ew_2$ is a prime cycle at $s(p)$, and $pp_1$ is a primitive extension of $p$. Indeed, if it is a non-trivial power of a prefix of $p$, then $p_1$ coincides with some prime factor of $p$, which contradicts the fact that $p$ does not contain $e$. 
    Next, we choose $e'$ which does not appear in $pp_1$ and do the same process. We can continue this process until $q=pp_1\dots p_n$ goes through all edges. By construction, $q$ is a primitive extension of $p$.   

    We show (2). We may assume that $\ell(p) \leq \ell(q)$, and $[q]$ does not contain a primitive extension of $p$. 
    By Lemma~\ref{lem:Fine--Wilf}, choose prime cycles $s,t \in E^*_\cycle$ such that $ps^k$ and $qt^l$ are primitive extensions of $p$ and $q$ respectively for all $k,l \geq 1$. Let $p_0,q_0 \in E^*_\cycle$ be prime cycles such that $p_0 \neq s$, $q_0 \neq t$, $p_0$ appears in the prime decomposition of $p$, and $q_0$ appears in the prime decomposition of $q$. Let $n>2\ell(q)$ and $m=n+\ell(q)-\ell(p)$, and put $r=ps^mqt^n$. We have $\ell(r) = 2(n+\ell(q))$. 
    
    We claim that $r$ is a primitive extension of $p$. Suppose $r[0,L-1]$ is not primitive for some $n+\ell(q) < L \leq 2(n+\ell(q))$, and put $r[0,L-1] = r[0,K-1]^T$ for some $K \leq L$ and $T \geq 2$. 
    Then, we have $K \geq \ell(p)$. Indeed, if $K < \ell(p)$, then $s^m$ contains $p$ since $m \geq 2\ell(p)$, which is a contradiction. 
    Moreover, we have $K \geq m$. Indeed, if $K<m$, then $s^m$ contains $r[k,K-1]r[0,k-1]$ for some $k$, so that $s^m$ contains $p_0$, which is a contradiction. 
    Thus, we have 
    \[ n+\ell(q)-\ell(p) \leq \frac LT \leq \frac{\ell(r)}{T} = \frac{2(n+\ell(q))}{T}.\]
    Hence, $T = 2$ and $L \geq 2(n+\ell(q) -\ell(p))$, so that we have $ps^A = s^B qt^C$ with 
    \[ A+B = m = n+\ell(q)-\ell(p),\ A+\ell(p) = B+\ell(q) + C =L/2. \]
    This implies that 
    \[ A \geq n+\ell(q)-2\ell(p),\ B \leq \ell(p),\ C \geq n-2\ell(p),\ A \geq B+C.\]
    In particular, we have $s=t$ and $ps^{A-C} = s^Bq$. This means that we can write 
    $p=s^Bw$ and $q=ws^{A-C}$ for some $w \in E^*_\cycle$. Since $q$ is primitive, $s^{B}ws^{A-B-C} \in [q]$ is also primitive, and is a primitive extension of $p$. This contradicts the assumption, so that the claim holds.  

    Let $r' = qt^nps^m \in [r]$, and we claim that $r'$ is a primitive extension of $q$. Suppose $r'[0,L-1]$ is not primitive for some $n+\ell(q) < L \leq 2(n+\ell(q))$, and put $r'[0,L-1] = r'[0,K-1]^T$ for some $K \leq L$ and $T \geq 2$. Since $q_0$ does not appear in $r'[\ell(q),n+\ell(q)-1]$, we have 
    \[ n \leq \frac LT \leq \frac{\ell(r)}{T} = \frac{2(n+\ell(q))}{T}.\]
    Hence, $T=2$ and $L \geq 2n$. Thus, we have $qt^A = t^B ps^C$ with 
    \[ A+B = n,\ A+\ell(q) = B+\ell(p) + C=L/2. \]
    This implies that 
    \[ A \geq n - \ell(q),\ B \leq \ell(q),\ C \geq n-\ell(p) -\ell(q),\ A \leq B+C.\]
    Hence, we have $s=t$.
    If $C \geq A$, then $q=s^Bps^{C-A}$, so that $ps^{B+C-A} \in [q]$. If $C \leq A$, then $qs^{A-C} = s^Bp$, so that 
    we can write $p=ws^{A-C}$ and $q=s^Bw$ for some $w \in E^*_\cycle$, which implies that $ps^{B+C-A}=ws^B \in [q]$. 
    Both cases contradict the assumption, so that $r'$ is a primitive extension of $q$. 
\end{proof}

\begin{proposition} \label{prop:transitive}
    Let $E$ be a strongly connected graph. 
    Let $p, q \in E^*$ be primitive cycles. Then, there exists $\alpha \in \Aut(\cG_E)$ such that $\alpha(p^\infty)=q^\infty$. 
    In particular, the action $\Out(\cG_E) \acts [E^*_\prim]$ is transitive. 
\end{proposition}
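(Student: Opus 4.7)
The plan is to introduce the relation $\sim$ on $\{p^\infty : p \in E^*_\prim\}$ by $p^\infty \sim q^\infty$ iff some $\alpha \in \Aut(\cG_E)$ satisfies $\alpha(p^\infty) = q^\infty$. This is an equivalence relation (closure under composition and inverses), and transitivity of $\Out(\cG_E) \acts [E^*_\prim]$ follows once we show $\sim$ has a single class. Two primitive building blocks are already in hand: by Lemma~\ref{lem:existinner}, any two tail-equivalent points in $X^{ep}_E$ are linked by an inner COE, so cyclic permutations give $p^\infty \sim (p')^\infty$ for every $p' \in [p]$; and by Lemma~\ref{lem:prim-transitivity}, extending a primitive cycle $p$ by a single prime cycle into a primitive cycle $q$ produces $p^\infty \sim q^\infty$. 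Since every primitive extension $\tilde p$ of $p$ has all its intermediate truncations primitive by definition, iterating Lemma~\ref{lem:prim-transitivity} yields $p^\infty \sim \tilde p^\infty$ whenever $\tilde p$ is a primitive extension of $p$.

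Now I would construct an explicit chain from $p^\infty$ to $q^\infty$ for arbitrary primitive cycles $p,q \in E^*_\prim$. First, apply Lemma~\ref{lem:4cycles}(1) to $p$ to obtain a primitive extension $\tilde p$ of $p$ that traverses every edge of $E$; in particular $\tilde p$ visits $s(q)$, so some cyclic permutation $p' \in [\tilde p]$ satisfies $s(p') = s(q)$. This gives
\[
p^\infty \;\sim\; \tilde p^\infty \;\sim\; (p')^\infty,
\]
with the first link by iterated Lemma~\ref{lem:prim-transitivity} and the second by Lemma~\ref{lem:existinner}. Next, since $s(p') = s(q)$, apply Lemma~\ref{lem:4cycles}(2) to obtain a single class $[r] \in [E^*_\prim]$ containing a primitive extension $r_1$ of $p'$ and a primitive extension $r_2$ of $q$. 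Iterating Lemma~\ref{lem:prim-transitivity} (and using the inverse of the resulting automorphism on the $q$-side) together with Lemma~\ref{lem:existinner} applied to the pair $r_1, r_2 \in [r]$ produces
\[
(p')^\infty \;\sim\; r_1^\infty \;\sim\; r_2^\infty \;\sim\; q^\infty.
\]

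Composing the automorphisms witnessing each link of the chain gives the desired $\alpha \in \Aut(\cG_E)$ with $\alpha(p^\infty) = q^\infty$, and then $\Out(\cG_E) \acts [E^*_\prim]$ is transitive. Since all genuinely combinatorial content is already packaged inside Lemmas~\ref{lem:prim-transitivity} and~\ref{lem:4cycles}, no serious new obstacle appears here; the only delicate bookkeeping step is arranging that the two sources agree before invoking Lemma~\ref{lem:4cycles}(2), which is precisely why it is essential to first pass to an edge-covering primitive extension of $p$ and then cyclically rotate it to start at $s(q)$.
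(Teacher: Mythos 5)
Your argument is correct and takes essentially the same route as the paper's proof: iterate Lemma~\ref{lem:prim-transitivity} along primitive extensions, use Lemma~\ref{lem:4cycles}(1) to reach an edge-covering cycle, rotate by an inner COE (Lemma~\ref{lem:existinner}) to match sources, and conclude with Lemma~\ref{lem:4cycles}(2). The only immaterial difference is that you apply Lemma~\ref{lem:4cycles}(1) to $p$ alone and rotate its extension to start at $s(q)$, whereas the paper extends both $p$ and $q$ to edge-covering cycles before matching their sources.
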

\begin{proof}
    If $q$ is a primitive extension of $p$, then there exists such a COE by using Lemma~\ref{lem:prim-transitivity} iteratively. 
    Hence, by Lemma~\ref{lem:4cycles} (1), we may assume that both $p$ and $q$ go through all edges. By replacing $p$ and $q$ with their cyclic permutations, we may assume in addition that $s(p)=s(q)$ because of Lemma~\ref{lem:existinner}. Now the claim follows from Lemma~\ref{lem:4cycles} (2), Lemma~\ref{lem:prim-transitivity}, and Lemma~\ref{lem:existinner}. 
\end{proof}

\section{Strong Proximality} \label{ssec:proximal}
In this section, we prove that the action $\Out(\cG_E) \acts \Pz M(X_E,\sigma_E)_+$ is strongly proximal. The proof of Theorem~\ref{thm:main} shall be completed in this section. 
Throughout this section, we fix a strongly connected graph $E$ which is not a subdivided circle. For a marker COE $\varphi$ and $v \in E^*$, $w \in E^*_\prim$ with $|v|\leq |w|$, we have 
\[ \frac{\langle v,\varphi(\eta_{[w]})\rangle}{\|\varphi(\eta_{[w]})\|} = \frac{\langle v, F_\varphi([w])\rangle}{|F([w])|}\]
by Proposition~\ref{prop:varphieta[p]}, Lemma~\ref{lem:rational-current}, and Lemma~\ref{lem:goodrep}. This fact is frequently used without any mention. 

\subsection{Partial equicontinuity lemmas of marker COEs} \label{ssec:technical}

Let $(e,f)\in E^2$ and $p \in E^*_\prim$. Assume that for all $n \geq 1$, $ep^nf$ does not contain an $ef$-segment, and $S(ep^nf,ep^nf) = \{|ep^nf|\}$ if $ e \neq f$ and $S(ep^nf,ep^nf) = \{|ep^nf|, 1\}$ if $ e = f$. 
Let $\varphi_n$ be the type I or type II marker COE for $\m=e,\m'=f$ and $D=\{o,p^n\}$. 
Let $F_n=F_{\varphi_n}$. 

In this subsection, we assume that $E^{[2]} \setminus ef$ is strongly connected. 
Let $[p] = \{ p(1),\dots,p(|p|)\}$ by Proposition~\ref{prop:general-cyclic}. For $k \geq 1$, let 
\[ [p]^k = \{ p(1)^k,\dots,p(|p|)^k \}.\]

\begin{lemma} \label{lem:easyoverlap}
    If $n, m \geq 1$ and $n \neq m$, then $S(ep^nf,ep^mf) =\emptyset$ if $e \neq f$ and $S(ep^nf,ep^mf) = \{1\}$ if $e=f$. 
\end{lemma}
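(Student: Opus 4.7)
The plan is to assume an overlap $k \in S(ep^nf, ep^mf)$ and reduce to the self-overlap hypothesis on $ep^nf$ by splitting on the size of $k$. Write $u = ep^nf$, $v = ep^mf$, $L = |u| = n|p| + 2$, and assume without loss of generality that $n < m$; the case $n > m$ is strictly analogous.

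The first step is to observe that the first $n|p| + 1$ letters of $u$ and $v$ coincide (both equal $ep^n$), since $n \leq m$. Consequently, for any $1 \leq k \leq n|p| + 1$, the length-$k$ prefix of $v$ equals the length-$k$ prefix of $u$, so $k \in S(u, v)$ forces $k \in S(u, u)$. The hypothesis on $S(u,u)$ then collapses this to $k = L$ (impossible since $k \leq L - 1$) or $k = 1$ with $e = f$.

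The only remaining value is $k = L$. Here the suffix of $u$ of length $L$ is $u$ itself, while the length-$L$ prefix of $v$ is $ep^n p_1$, because $m > n$ forces the letter at position $L$ of $v$ to be the first letter $p_1$ of the $(n+1)$-th copy of $p$ in $v$ rather than $f$. Matching then demands $p_1 = f$, which contradicts the no-$ef$-segment hypothesis: $ep_1 = ef$ would appear as a prefix segment of $ep^nf$. For $n > m$, the analogous extremal case is $k = |v| = m|p| + 2$, where the length-$k$ suffix of $u$ begins at position $(n-m)|p| + 1 \geq 2$; this first letter is $p_{|p|}$, and matching it against the initial $e$ of $v$ forces $p_{|p|} = e$, again contradicting the no-$ef$-segment hypothesis via the potential segment $p_{|p|}f = ef$ in $ep^nf$.

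Combining the two cases yields $S(ep^nf, ep^mf) = \emptyset$ when $e \neq f$ and $\{1\}$ when $e = f$. The argument is essentially bookkeeping; the only point demanding care is identifying, in the extremal case, precisely which letter of $p$ sits at the boundary between successive copies of $p$ inside $v$ (respectively inside $u$). Once those indices are set up correctly, the no-$ef$-segment condition immediately closes each subcase.
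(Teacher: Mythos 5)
Your proof is correct and takes essentially the same route as the paper's: interior overlaps ($k\le \min(n,m)|p|+1$) are reduced to the self-overlap hypothesis on $ep^nf$, and the extremal overlap is excluded by comparing the letter at the block boundary, which would force an $ef$-segment in $ep^nf$. If anything, you are more explicit than the paper in the extremal case $k=n|p|+2$ for $n<m$, where the contradiction indeed comes from the no-$ef$-segment assumption (via $p_{[0]}=f$) rather than from the overlap condition itself.
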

\begin{proof}
    It suffices to show $S(ep^nf,ep^mf) \subseteq \{1\}$ and $S(ep^mf,ep^nf) \subseteq \{1\}$ by assuming $n < m$. 
    Suppose there exists $k \geq 2$ such that $(ep^mf)_{[0,k-1]}=(ep^nf)_{[n|p|-k+2,n|p|+1]}$. Since $k \leq n|p|+2 < |ep^mf|$, $(ep^mf)_{[0,k-1]}$ is a prefix of $ep^n$, so this contradicts the overlap condition for $ep^nf$. Suppose there exists $k \geq 2$ such that $(ep^nf)_{[0,k-1]}=(ep^mf)_{[m|p|-k+2,m|p|+1]}$. A similar argument shows that $k=|ep^nf|$. Then, the last edge of $p$ must be $e$, so $epf$ contains $ef$, which is a contradiction. 
\end{proof}

\begin{lemma} \label{lem:general-uniform}
    For any $\varepsilon>0$, $1 \geq \delta>0$, and $k\geq 1$, there exists $M>0$ such that 
    \[ \sum_{j=1}^{|p|} \frac{\langle p(j)^k, F_n([w])\rangle}{|F_n([w])|} \geq 1-\varepsilon \]
    for all $n \geq M$ and $[w] \in [E_\prim^*]$ with $\langle ef, [w] \rangle \geq \delta|w|$.
\end{lemma}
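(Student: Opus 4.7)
The plan is to choose a good representative $w \in [w]$ in the sense of Section~\ref{ssec:good} so that $\bar F_n(w)$ is a good representative of $F_n([w])$, and then to bound the length $|F_n([w])|$ from above and the quantity $\sum_j \langle p(j)^k, F_n([w])\rangle$ from below. By construction of the marker COE associated with markers $\m=e,\ \m'=f$ and data $D=\{o,p^n\}$, each cyclic position of $w$ where $ef$ starts is consumed by exactly one substitution $ef\mapsto ep^nf$, which adds $n|p|$ characters. Setting $N\coloneqq\langle ef,[w]\rangle$ and appealing to the overlap hypotheses (which in type~II force these occurrences to be pairwise disjoint, and in type~I allow them to share at most a single $e$, as exhibited by $w=e^3\mapsto(ep^n)^3$), this bookkeeping will yield
\[
|F_n([w])|=|w|+Nn|p|.
\]

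For the lower bound on the numerator, each of the $N$ substitutions inserts a block $ep^nf$ into $\bar F_n(w)$ whose middle portion is a contiguous copy of $p^n$ of length $n|p|$. Within that copy, every position with index in $\{1,\dots,(n-k)|p|+1\}$ (measured from the left end of the $p^n$-block) starts with some rotation $p(j)^k$, where $j\in\{1,\dots,|p|\}$ depends only on that index modulo $|p|$. Discarding possible extra $p^k$-occurrences coming from unreplaced parts of $w$ or from wrap-around, summing the $N$ contributions gives
\[
\sum_{j=1}^{|p|}\langle p(j)^k,F_n([w])\rangle \;\geq\; N\bigl((n-k)|p|+1\bigr) \;\geq\; N(n-k)|p|.
\]

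Combining these two estimates with $|w|\leq N/\delta$ (the hypothesis $N\geq\delta|w|$ rewritten) produces
\[
\sum_{j=1}^{|p|}\frac{\langle p(j)^k,F_n([w])\rangle}{|F_n([w])|}\;\geq\;\frac{N(n-k)|p|}{|w|+Nn|p|}\;\geq\;\frac{(n-k)|p|}{1/\delta+n|p|}.
\]
The last expression is independent of $[w]$ and converges to $1$ as $n\to\infty$, so taking $M$ large enough that $(n-k)|p|/(1/\delta+n|p|)\geq 1-\varepsilon$ for every $n\geq M$ will finish the argument. The main technical point lies in verifying the length formula in the type~I case $e=f$, where successive $ee$-segments may overlap by a single letter; this requires carefully unwinding the good-representative construction from Section~\ref{ssec:good} and tracking the algorithm defining $\bar F_n$, but it introduces no substantively new ideas beyond those already developed there.
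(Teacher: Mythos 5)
Your argument is essentially the paper's own proof: the same lower bound of at least $(n-k)|p|$ new $[p]^k$-occurrences per inserted $ep^nf$-block, the same length estimate $|w|+Kn|p|$ for the image, and the same use of $|w|/\langle ef,[w]\rangle\leq \delta^{-1}$ to get the uniform bound $\frac{(n-k)|p|}{n|p|+\delta^{-1}}\to 1$. The only quibble is that your length formula should be the inequality $|F_n([w])|\leq |w|+Nn|p|$ rather than an equality, since any $ep^nf$-segments already present in $w$ are shrunk to $ef$ by $F_n$; as only the upper bound enters the estimate (and the wrap-around bookkeeping in the type I case works out exactly as you indicate, matching the paper's second case), this does not affect the correctness of the proof.
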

\begin{proof}
    Fix $n>k$ and 
    $[w] \in [E_\prim^*]$ with $\langle ef, [w] \rangle \geq \delta|w|$. Fix a good representative $w$ of $[w]$. 
    Let $K = \langle ef, [w]\rangle$, so that $K \geq \delta|w|$.  
    First, suppose either $\varphi$ is of type II, or $e=f$ and $w$ does not end with $e$. In this case, $K$ is equal to the number of $ef$-segments in $w$. 
    The map $F_n$ replaces each $ef$-segment with $ep^nf$.  
    Each new $ep^nf$-segment in $\bar{F}_n(w)$ arising this way contains at least $|p|(n-k)$ $[p]^k$-segments.
    This is because all segments in $p^n$ with length $|p|^k$ are elements in $[p]^k$. 
    Hence, 
    \begin{equation} \label{eqn:uniform1}
        \sum_{j=1}^{|p|}\langle p(j)^k,F_n([w]) \rangle \geq |p|(n-k)K.
    \end{equation}  
    All other segments in $w$ are either shrunk in length or left unchanged. Hence, 
     \begin{equation} \label{eqn:uniform2}
         |F_n([w])| \leq (n|p|+2)K + (|w|-2K) = n|p|K+|w|. 
     \end{equation}
    Next, suppose $e=f$ and $w$ begins and ends with $e$. In this case, the number of $ef$-segments in $w$ is $K-1$. 
    The map $F_n$ replaces each $e^2$-segment with $ep^ne$, and replaces the last $e$ with $ep^n$. Hence, the same argument shows that Inequality~\eqref{eqn:uniform1} holds in this case. Similarly, we have 
    \[  |F_n([w])| \leq (n|p|+2)(K-1) + (n|p|+1) + (|w|-(2K-1)) = n|p|K+|w|. \]
    Hence, Inequality~\eqref{eqn:uniform2} also holds in this case. 
     
    Thus, we have
    \[
    \sum_{j=1}^{|p|}\frac{\langle p(j)^k, F_n([w])\rangle}{|F_n([w])|} \geq 
    \frac{|p|(n-k)K}{n|p|K+|w|}
    = \frac{|p|(n-k)}{n|p|+|w|/K} \geq \frac{|p|(n-k)}{n|p|+\delta ^{-1}}. 
    \]
    Hence, 
    the claim holds by choosing $M>k$ such that
    \[ \frac{|p|(n-k)}{n|p|+\delta^{-1}} > 1-\varepsilon \]
    for $n \geq M$. 
\end{proof}
\begin{lemma} \label{lem:mu-uniform}
    For any $\varepsilon>0$, $1 \geq \delta>0$, and $k\geq 1$, there exists $M>0$ such that 
    \[ \sum_{j=1}^{|p|} \frac{\langle p(j)^k, \varphi_n(\mu)\rangle}{\|\varphi_n(\mu)\|} \geq 1-\varepsilon \]
    for all $n \geq M$ and $[\mu] \in \Pz M(X_E,\sigma_E)_+$ with $\langle ef, \mu \rangle \geq \delta\|\mu\|$.
\end{lemma}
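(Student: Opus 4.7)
The plan is to deduce this lemma from Lemma~\ref{lem:general-uniform} by approximating an arbitrary $[\mu] \in \Pz M(X_E,\sigma_E)_+$ by periodic measures and invoking continuity. The crucial observation is that although the family $\{\varphi_n\}$ is not equicontinuous (see Remark~\ref{rem:discontinuous}), each individual $\varphi_n$ is a fixed COE, so it induces a homeomorphism on $\Pz M(X_E,\sigma_E)_+$. Consequently, by Lemma~\ref{lem:seminorms}, for each fixed $n$ and each $v \in E^*$, the function
\begin{equation*}
g_n^{v} \colon \Pz M(X_E,\sigma_E)_+ \to \Rz_{\geq 0}, \quad [\nu] \mapsto \frac{\langle v, \varphi_n(\nu)\rangle}{\|\varphi_n(\nu)\|}
\end{equation*}
is continuous.

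Given $\varepsilon, \delta, k$, I would first apply Lemma~\ref{lem:general-uniform} with $\delta/2$ in place of $\delta$ to obtain $M > k$ such that for all $n \geq M$ and all $[w] \in [E^*_\prim]$ with $\langle ef, [w]\rangle \geq (\delta/2) |w|$, one has
\begin{equation*}
\sum_{j=1}^{|p|} \frac{\langle p(j)^k, F_n([w])\rangle}{|F_n([w])|} \geq 1 - \varepsilon.
\end{equation*}
By Proposition~\ref{prop:varphieta[p]}, Lemma~\ref{lem:goodrep}, and Lemma~\ref{lem:rational-current} (applicable since $n \geq M > k$ forces $|p(j)^k| \leq |F_n([w])|$ once $[w]$ contains at least one $ef$-segment), this is exactly $\sum_{j=1}^{|p|} g_n^{p(j)^k}([\eta_{[w]}]) \geq 1 - \varepsilon$ under the same hypothesis on $[w]$.

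Now fix $n \geq M$ and $[\mu]$ with $\langle ef, \mu\rangle \geq \delta \|\mu\|$. By Proposition~\ref{prop:periodicmeasures}, choose a sequence $[\eta_{[w_l]}] \to [\mu]$ in $\Pz M(X_E,\sigma_E)_+$. By continuity of $[\nu] \mapsto \langle ef, \nu\rangle/\|\nu\|$ (Lemma~\ref{lem:seminorms}) combined with Lemma~\ref{lem:rational-current}, we have $\langle ef, [w_l]\rangle / |w_l| \to \langle ef, \mu\rangle/\|\mu\| \geq \delta$, so $\langle ef, [w_l]\rangle \geq (\delta/2) |w_l|$ for all $l$ large enough. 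Passing to the limit in the inequality $\sum_{j} g_n^{p(j)^k}([\eta_{[w_l]}]) \geq 1 - \varepsilon$ via continuity of each $g_n^{p(j)^k}$ yields the required bound for $[\mu]$.

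The only subtle point is maintaining the hypothesis $\langle ef, \mu\rangle \geq \delta\|\mu\|$ through the density approximation, which is handled by the slack between $\delta$ and $\delta/2$; no genuine obstacle arises since the constant $M$ produced is independent of $[\mu]$.
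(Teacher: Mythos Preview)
Your proof is correct and follows essentially the same approach as the paper's: both apply Lemma~\ref{lem:general-uniform} with the slack $\delta/2$, then transfer the inequality from periodic measures to general $[\mu]$ via density (Proposition~\ref{prop:periodicmeasures}) and continuity of the functionals $[\nu]\mapsto \langle v,\varphi_n(\nu)\rangle/\|\varphi_n(\nu)\|$. The only cosmetic difference is that the paper chooses a single approximating $[w]$ using the slack $\varepsilon/2$, whereas you take a sequence and pass to the limit, which lets you keep $\varepsilon$ itself.
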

\begin{proof}
    Let $\varepsilon>0$, $1 \geq \delta>0$, and $k\geq 1$. 
    Let $M$ be the constant in Lemma~\ref{lem:general-uniform} for $\varepsilon/2$ and $\delta/2$. 
    Let $n \geq M$ and $[\mu] \in \Pz M(X_E,\sigma_E)_+$ with $\langle ef, \mu \rangle \geq \delta\|\mu\|$.  
    By continuity of $\varphi_n$, Lemma~\ref{lem:seminorms}, and Proposition~\ref{prop:periodicmeasures}, there exists $[w] \in [E^*_\prim]$ such that 
    \[ 
    \left| \frac{\langle ef,\mu\rangle}{\|\mu\|} - \frac{\langle ef, [w] \rangle}{|w|}\right| < \frac{\delta}{2}\text{ and }
    \left| \sum_{j=1}^{|p|} \frac{\langle p(j)^k, \varphi_n(\mu)\rangle}{\|\mu\|} 
    - \sum_{j=1}^{|p|} \frac{\langle p(j)^k, F_n([w])\rangle}{|F_n([w])|}\right| < \frac{\varepsilon}{2}. \]
    Hence, the claim holds. 
\end{proof}

\begin{lemma}\label{lem:singleton}
    Let $[\mu] \in \Pz M(X_E,\sigma_E)_+$, and let $[\mu_n]\in \Pz M(X_E,\sigma_E)_+$ be a sequence.   
    \begin{enumerate}
        \item We have $[\mu] = [\eta_{[p]}]$ if and only if 
             \[ \sum_{j=1}^{|p|} \frac{\langle p(j)^k, \mu \rangle}{\| \mu\|}=1\]
             for all $k \geq 1$. 
        \item The sequence $[\mu_n]$ converges to $[\eta_{[p]}]$ if and only if 
        any $\varepsilon>0$ and $k\geq 1$, there exists $M>0$ such that 
        \[ \sum_{j=1}^{|p|} \frac{\langle p(j)^k, \mu_n\rangle}{\| \mu_n\|} \geq 1-\varepsilon \]
         for all $n \geq M$.
    \end{enumerate}
\end{lemma}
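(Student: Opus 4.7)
The plan is to reduce everything to the following two elementary facts about the cylinders $\cZ(p(j)^k)$: they are pairwise disjoint for fixed $k$ (different cyclic permutations of a primitive cycle are distinct paths of length $k|p|$ by Proposition~\ref{prop:general-cyclic}), and
\[
\bigcap_{k \geq 1}\bigsqcup_{j=1}^{|p|}\cZ(p(j)^k) = \{p(j)^\infty : j=1,\ldots,|p|\},
\]
i.e., the orbit of $p^\infty$. The first equality comes from a short matching argument: if $x$ lies in $\cZ(p(j_k)^k)$ for every $k$, then for $k \leq k'$ the prefix $p(j_k)^k$ must be a prefix of $p(j_{k'})^{k'}$, forcing $j_k=j_{k'}$, and then $x=p(j)^\infty$.

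For Part~(1), the forward direction is a direct calculation: for $\mu=\eta_{[p]}$, the point $p(j)^\infty$ lies in exactly one of the $\cZ(p(i)^k)$ (namely $i=j$), so $\langle p(j)^k,\eta_{[p]}\rangle = 1$ for each $j$, and $\|\eta_{[p]}\|=|p|$, giving $\sum_j \langle p(j)^k,\eta_{[p]}\rangle/\|\eta_{[p]}\| = 1$. For the converse, disjointness of the cylinders gives
\[
\sum_{j=1}^{|p|}\langle p(j)^k,\mu\rangle = \mu\Bigl(\bigsqcup_j \cZ(p(j)^k)\Bigr) \leq \|\mu\|,
\]
so equality for every $k$ forces $\mu$ to vanish on each (clopen) complement $X_E\setminus\bigsqcup_j \cZ(p(j)^k)$. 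Taking the countable union of these null sets, $\mu$ is supported on the orbit $\{p(j)^\infty : j\}$. Since $\mu$ is shift-invariant and the orbit is a single finite $\sigma_E$-orbit of cardinality $|p|$, $\mu$ assigns equal mass to each point, hence $\mu$ is a positive scalar multiple of $\eta_{[p]}$.

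For Part~(2), the forward direction follows from Lemma~\ref{lem:seminorms}: each function $[\nu]\mapsto \langle p(j)^k,\nu\rangle/\|\nu\|$ is continuous, so by Part~(1) applied to $\eta_{[p]}$, convergence $[\mu_n]\to[\eta_{[p]}]$ gives $\sum_j \langle p(j)^k,\mu_n\rangle/\|\mu_n\| \to 1$ for every $k$. For the reverse direction, I would argue by contradiction using that $\Pz M(X_E,\sigma_E)_+$ is compact and metrizable (as it is homeomorphic to $P(X_E,\sigma_E)$, which is a weak$^*$-compact metrizable subset of $C(X_E,\Rz)^*$ by separability of $C(X_E,\Rz)$). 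If $[\mu_n]\not\to[\eta_{[p]}]$, pass to a subsequence converging to some $[\nu]\neq[\eta_{[p]}]$; by continuity of the seminorms the hypothesis transfers to $[\nu]$, giving $\sum_j \langle p(j)^k,\nu\rangle/\|\nu\|\geq 1-\varepsilon$ for every $\varepsilon>0$ and every $k$, i.e., equality to $1$ for all $k$, whence by Part~(1) $[\nu]=[\eta_{[p]}]$, a contradiction.

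There is no real obstacle here; the only point requiring care is the identification of the intersection of cylinders with the orbit, which uses primitivity (via $|[p]|=|p|$) to ensure distinct cyclic permutations give disjoint cylinders of the same length.
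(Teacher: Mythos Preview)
Your proof is correct and follows essentially the same route as the paper: for (1) you identify the intersection $\bigcap_k\bigsqcup_j\cZ(p(j)^k)$ with the finite orbit and use shift-invariance to force equidistribution, and for (2) you use compactness to reduce to (1) via accumulation points. The paper is terser---it calls the forward directions trivial and leaves the metrizability of $\Pz M(X_E,\sigma_E)_+$ implicit when passing from ``every accumulation point is $[\eta_{[p]}]$'' to convergence---but your added justifications (disjointness of the cylinders via Proposition~\ref{prop:general-cyclic}, the prefix-matching argument for the intersection, and the explicit appeal to separability of $C(X_E,\Rz)$) are all correct and fill in exactly those points.
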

\begin{proof}
    (1): The only if direction is trivial. If $\|\mu\|=1$ and $\sum_{j=1}^{|p|} \langle p(j)^k, \mu \rangle = 1$ for all $k$, then $\mu$ is concentrated on 
    \[ \bigcap_{k=1}^\infty \bigcup_{j=1}^{|p|} \cZ(p(j)^k) = \{ p(1)^\infty, \dots, p(|p|)^\infty\}. \]
    Since $\mu$ is shift-invariant, we have 
    $\mu(\{p(1)^\infty\}) = \dots = \mu(\{p(|p|)^\infty\})$.

    (2): Let $[\mu]$ be an accummulation point of $[\mu_n]$. By assumption, we have 
    \[ \sum_{j=1}^{|p|} \frac{\langle p(j)^k, \mu\rangle}{\| \mu\|} =1 \]
    for all $k \geq 1$. Hence, $[\mu]=[\eta_{[p]}]$ by (1), so that $[\mu_n]$ converges to $[\eta_{[p]}]$. 
\end{proof}

The next lemma is the key observation for the analysis of measures $\mu$ such that $\langle ef, \mu\rangle = 0$. 
\begin{lemma} \label{lem:infsum}
    For every $[\mu] \in \Pz M(X_E,\sigma_E)_+$, we have 
    \[ \lim_{n \to \infty} \frac{n\langle ep^nf, \mu\rangle}{\| \mu \|} = 0.\]
\end{lemma}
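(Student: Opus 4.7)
The plan is to reduce the claim to a monotonicity/summability argument on the auxiliary sequence $a_n := \mu(\cZ(p^n f))$. Since $\cZ(e p^n f) \subseteq \sigma_E^{-1}\cZ(p^n f)$, shift-invariance of $\mu$ gives $\mu(\cZ(e p^n f)) \leq \mu(\sigma_E^{-1}\cZ(p^n f)) = a_n$, so it suffices to show that $n a_n / \|\mu\| \to 0$.

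I would then verify two properties of the sequence $(a_n)_{n \geq 1}$. For monotonicity, the direct containment $\cZ(p^{n+1} f) \subseteq \sigma_E^{-|p|}\cZ(p^n f)$, combined with shift-invariance of $\mu$, yields $a_{n+1} \leq a_n$. For summability, I would extract from the hypothesis that $e p^n f$ contains no $ef$-segment (for $n=1$) the length-two consequence $p_{[0]} \neq f$, by examining the initial two-letter prefix $e \cdot p_{[0]}$ of $e p f$. Consequently, if $x \in \cZ(p^n f) \cap \cZ(p^m f)$ with $n < m$, then the first condition gives $x_{[n|p|]} = f$ while the second forces $x_{[n|p|]} = p_{[0]}$, a contradiction. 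Hence the cylinders $\cZ(p^n f)$ are pairwise disjoint subsets of $\cZ(p)$, giving $\sum_n a_n \leq \mu(\cZ(p)) \leq \|\mu\|$.

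Finally, I would invoke the elementary fact that any nonnegative decreasing summable sequence satisfies $n a_n \to 0$: the estimate $n a_{2n} \leq \sum_{k=n+1}^{2n} a_k$ tends to zero as a tail of a convergent series, yielding the conclusion along even indices, and monotonicity upgrades this to all $n$. Dividing by $\|\mu\|$ closes the argument. The main (mild) subtlety is that summability of $(a_n)$ alone does not give $n a_n \to 0$, so the monotonicity step is genuinely essential; beyond this, everything reduces to the two-letter consequence of the overlap hypothesis, and I do not anticipate any substantial obstacle.
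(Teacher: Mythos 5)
Your argument is correct, and it takes a genuinely different route from the paper. The paper proves the stronger uniform estimate $\sum_{n=1}^N n|p|\,\langle ep^nf,[w]\rangle/|w| \leq 1$ for periodic classes $[w]$ by a combinatorial packing argument (distinct $ep^nf$-segments are disjoint or overlap in at most one letter, by the standing overlap conditions together with Lemma~\ref{lem:easyoverlap}, so each such segment occupies at least $n|p|$ letters of $w$), and then transfers the bound to an arbitrary $[\mu]$ via density of the periodic classes (Proposition~\ref{prop:periodicmeasures}) and continuity of the functionals $[\nu]\mapsto \langle v,\nu\rangle/\|\nu\|$ (Lemma~\ref{lem:seminorms}); convergence of the resulting series then gives the limit. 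You instead work directly with $\mu$: the single consequence $p_{[0]}\neq f$ of the no-$ef$-segment hypothesis (at $n=1$) makes the cylinders $\cZ(p^nf)$, $n\geq 1$, pairwise disjoint subsets of $\cZ(p)$, giving $\sum_n a_n\leq\|\mu\|$ for $a_n=\mu(\cZ(p^nf))$, while shift-invariance gives both $\langle ep^nf,\mu\rangle\leq a_n$ and the monotonicity $a_{n+1}\leq a_n$, and the classical Olivier/Abel--Pringsheim fact (decreasing, nonnegative, summable implies $na_n\to 0$) finishes. Your route is more elementary and self-contained: it bypasses Sigmund's specification/density of periodic measures and needs neither the full overlap conditions nor Lemma~\ref{lem:easyoverlap}, and your observation that monotonicity is genuinely needed (summability of $a_n$ alone would not suffice) is accurate. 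What the paper's approach buys in exchange is the explicit summable bound $\sum_n n|p|\,\langle ep^nf,\mu\rangle\leq\|\mu\|$, which is formally stronger than the limit statement, though only the limit is used later; all steps of your proof check out.
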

\begin{proof}
    Fix $N \in \Zz_{>0}$. Then we have 
    \[ \sum_{n=1}^N \frac{n|p|\langle ep^nf, [w]\rangle}{|w|} \leq 1\]
    for all $[w] \in [E^*_\prim]$, because $ep^nf$-segments are disjoint or have overlap of length 1 by the overlap conditions and Lemma~\ref{lem:easyoverlap}, so each $ep^nf$-segment occupies a space of length at least $n|p|$ in $w$. Hence, by Proposition~\ref{prop:periodicmeasures} and Lemma~\ref{lem:seminorms}, we have 
    \[ \sum_{n=1}^N \frac{n|p|\langle ep^nf, \mu\rangle}{\| \mu\|} \leq 1\]
    for all $N$.
\end{proof}
\begin{lemma} \label{lem:mu-stable2}
    Let $n \in \Zz_{>0}$, $1/2>\varepsilon>0$, and let $[\mu] \in \Pz M(X_E,\sigma_E)_+$ with
    \[ \langle ef, \mu\rangle=0\ \mbox{ and }\
    \frac{\langle ep^nf, \mu\rangle}{\| \mu \|} < \frac{\varepsilon}{n|p|+2}.
    \]
    If $v \in E^*$ does not contain $ef$, then 
    \[ \frac{\langle v, \varphi_n(\mu)\rangle}{\|\varphi_n(\mu)\|} \leq \frac{1}{1-2\varepsilon}\frac{\langle v,\mu \rangle}{\|\mu\|}. \]
    If $v \in E^*$ contains $ef$, then 
    \[ \frac{\langle v, \varphi_n(\mu)\rangle}{\|\varphi_n(\mu)\|} \leq \frac{1}{1-2\varepsilon} \frac{\langle ep^nf, \mu\rangle}{\| \mu \|}. \]
\end{lemma}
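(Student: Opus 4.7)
The plan is to reduce the statement to the case of periodic measures $[\eta_{[w]}]$ with $w$ avoiding $ef$, where the bounds become explicit combinatorial counts, and then pass to the limit. Since $\langle ef,\mu\rangle=0$ and $E^{[2]}\setminus ef$ is strongly connected by the standing hypothesis of the subsection, Corollary~\ref{cor:rational-approx} produces a sequence $[\eta_{[w_k]}]$ converging to $[\mu]$ in $\Pz M(X_E,\sigma_E)_+$ with each $w_k\in E^*_\prim$ containing no $ef$-segment. By Lemma~\ref{lem:seminorms} and continuity of $\varphi_n$, both $\frac{\langle u,\eta_{[w_k]}\rangle}{\|\eta_{[w_k]}\|}$ and $\frac{\langle u,\varphi_n(\eta_{[w_k]})\rangle}{\|\varphi_n(\eta_{[w_k]})\|}$ converge to their $\mu$-analogues for every word $u\in E^*$. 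Fixing $\varepsilon'$ with $\frac{\langle ep^nf,\mu\rangle(n|p|+2)}{\|\mu\|}<\varepsilon'<\varepsilon$ and discarding finitely many initial terms, we may assume $\frac{\langle ep^nf,\eta_{[w_k]}\rangle}{\|\eta_{[w_k]}\|}<\frac{\varepsilon'}{n|p|+2}$ for all $k$. It therefore suffices to prove the two inequalities for each $[\eta_{[w_k]}]$ with $\frac{1}{1-\varepsilon'}$ in place of $\frac{1}{1-2\varepsilon}$ (a strictly better constant when $\varepsilon<1/2$), and then let $k\to\infty$.

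For the periodic case, I would fix a good representative $w\in[w_k]$ (Section~\ref{ssec:good}) and set $K:=\langle ep^nf,[w]\rangle$. By Lemma~\ref{lem:goodrep}, $\bar F_n(w)$ is obtained from $w$ by replacing each cyclic $ep^nf$-segment with $ef$. The overlap conditions of the subsection and Lemma~\ref{lem:easyoverlap} force distinct $ep^nf$-segments in $w$ to be either disjoint (type II) or to share only a single central edge $e=f$ (type I); so each replacement shortens the word by exactly $n|p|$, yielding $|F_n([w])|=|w|-Kn|p|$. The hypothesis on $K$ gives $Kn|p|/|w|<\varepsilon' n|p|/(n|p|+2)<\varepsilon'$, so $|F_n([w])|>(1-\varepsilon')|w|$. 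Moreover, since $w$ contains no $ef$-segment, no spurious $ef$ is created at the boundaries of the replacements (the character of $w$ preceding or following an $ep^nf$-segment cannot be $e$ resp.\ $f$, for otherwise $w$ would already contain $ef$), so $\langle ef,F_n([w])\rangle=K$.

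It then remains to bound $\langle v,F_n([w])\rangle$. If $v$ contains $ef$, Lemma~\ref{lem:subword} gives $\langle v,F_n([w])\rangle\leq\langle ef,F_n([w])\rangle=K=\langle ep^nf,[w]\rangle$, and combining with the lower bound on $|F_n([w])|$ and Lemma~\ref{lem:rational-current} produces the desired inequality. If $v$ does not contain $ef$, every $v$-segment of $F_n([w])$ lies inside a maximal $ef$-free block of $F_n([w])$, which coincides up to a single-letter extension on each side with the block of $w$ lying between the two corresponding consecutive $ep^nf$-segments. This yields a position-preserving injective lift of such $v$-segments from $F_n([w])$ into $w$, so that $\langle v,F_n([w])\rangle\leq\langle v,[w]\rangle$; dividing by $|F_n([w])|$ delivers the inequality in this case. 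The main technical obstacle is justifying this injective lift, which hinges on a careful use of the overlap conditions, especially in type I where $ep^ne$-segments may share a central $e$, so one must verify that the one-letter boundary extensions of an $ef$-free block in $F_n([w])$ really land inside $w$ outside the interiors of the $ep^nf$-segments. Once the periodic case is settled, the convergence set up in the first paragraph passes to the limit $k\to\infty$ and completes the proof.
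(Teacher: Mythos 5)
Your proposal is correct and follows essentially the same route as the paper: approximate $[\mu]$ by periodic classes $[\eta_{[w]}]$ with $w$ avoiding $ef$ (Corollary~\ref{cor:rational-approx}, Lemma~\ref{lem:seminorms}, continuity of $\varphi_n$), prove the two estimates combinatorially for $F_n([w])$ on a good representative using the overlap conditions, and pass to the limit. Your sharper bookkeeping ($|F_n([w])|=|w|-Kn|p|$, with cyclic counts absorbing the type~I wrap-around and yielding the constant $1/(1-\varepsilon')$) is a harmless refinement of the paper's cruder bound $(1-2\varepsilon)|w|<|F_n([w])|$, and the monotonicity/injective-lift claim you flag for $v$ not containing $ef$ is precisely what the paper also asserts (with even less detail), so there is no genuine gap beyond the routine requirement that the approximating words be long enough for the counting lemmas to apply.
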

\begin{proof}
    By continuity of $\varphi_n$, Lemma~\ref{lem:current-topology}, Lemma~\ref{lem:seminorms}, and Corollary~\ref{cor:rational-approx}, it suffices to show the claim for $[\mu]=[\eta_{[w]}]$ with $[w] \in [E^*_\prim]$ such that $\langle ef,[w] \rangle=0$. 
    In addition, we may assume that $|w|$ is large enough so that $\varepsilon|w| \geq n|p|+1$. 
    Fix a good representative $w$ of $[w]$. 
    The total length of all $ep^nf$-segments in $w$ is bounded by $\varepsilon |w|$ because of the overlap condition. In addition, 
    an $ep^n$-segment may appear as a suffix of $w$, but its length $n|p|+1$ is also bounded by $\eps|w|$ 
    (this is relevant only when $\varphi_n$ is of type I). Thus, the total length of the rest of $w$ is greater than $(1-2\varepsilon)|w|$. Hence, 
    \begin{equation} \label{eqn:length}
        (1-2\varepsilon)|w| < |F_n([w])| \leq |w|. 
    \end{equation}
    Since $F_n$ just decreases the number of $v$-segments in $w$ for all $v$ which do not contain $ef$, we have 
    \[ 
    \frac{\langle v, F_n([w])\rangle}{|F_n([w])|} \leq\frac{\langle v, [w]\rangle}{|w|} \frac{|w|}{F_n([w])} 
    \leq \frac{1}{1-2\varepsilon}\frac{\langle v, [w]\rangle}{|w|}.
    \]
    If $v$ contains $ef$, then an $ep^nf$-segment produces a new $ef$-segment, so it has the potential to produce a new $v$-segment. 
    In addition, if $\varphi$ is of type I and $w$ ends with $ep^n$, then it possibly produces a $v$-segment at the intermediate of $w^2$, which needs to be taken into account. 
    Since a new $v$-segment is only produced in this way, and 
    \[ \langle v,[w]\rangle \leq \langle ef, [w]\rangle = 0 \]
    by Lemma~\ref{lem:subword}, we have 
    \[ 
    \frac{\langle v, F_n([w])\rangle}{|F_n([w])|} 
    \leq \frac{\langle ep^nf, [w] \rangle}{|F_n([w])|}  
    \leq \frac{1}{1-2\varepsilon}\frac{\langle ep^nf, [w]\rangle}{|w|}.
    \]
\end{proof}
\begin{lemma} \label{lem:mu-stable}
    For any $\varepsilon>0$, there exists $\delta>0$ such that for all $n \in \Zz_{>0}$ and $v \in E^*$ such that $v$ does not contain $ef$, we have 
    \[ \left| \frac{\langle v, \varphi_n(\mu)\rangle}{\|\varphi_n(\mu)\|} - \frac{\langle v,\mu \rangle}{\|\mu\|}\right| < \varepsilon \]
    for $[\mu] \in \Pz M(X_E,\sigma_E)_+$ with
    \[ \langle ef, \mu\rangle=0\ \mbox{ and }\
    \frac{\langle ep^nf, \mu\rangle}{\| \mu \|} < \frac{\delta}{n|p|+2}.
    \]
\end{lemma}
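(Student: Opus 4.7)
The plan is to reduce to periodic measures and then run a uniform casework on $v$. By continuity of $\varphi_n$, Lemma~\ref{lem:current-topology}, Lemma~\ref{lem:seminorms}, and Corollary~\ref{cor:rational-approx}, it suffices to prove the bound for $[\mu]=[\eta_{[w]}]$, where $[w]\in[E^*_\prim]$, $w$ avoids $ef$, and $K:=\langle ep^nf,[w]\rangle$ satisfies $K(n|p|+2)<\delta|w|$. Fix a good representative $w$, and let $A:=\langle v,[w]\rangle$, $A':=\langle v,F_n([w])\rangle$, $W:=|w|$, $W':=|F_n([w])|$. Since $F_n$ replaces each $ep^nf$-segment in $w$ by $ef$, one has $W'=W-Kn|p|>(1-\delta)W$. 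Moreover, since a $v$-segment of $w$ lying outside every $ep^nf$-region corresponds bijectively to a $v$-segment of $F_n([w])$ at the shifted position, $A-A'$ equals $(b)-(b')$, where $(b)$ counts $v$-segments of $w$ overlapping an $ep^nf$-region and $(b')$ counts new $v$-segments of $F_n([w])$ overlapping an $ef$-region arising from the replacement.

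The core step is a uniform bound $|A-A'|\leq 2\delta W$ obtained by splitting on $v$. Since $v$ does not contain $ef$, a $v$-segment of $F_n([w])$ overlapping a new $ef$ at positions $[a,a+1]$ must start at $j=a+1$ or $j=a-|v|+1$ (any other starting position forces the $v$-segment to contain $ef$), so $(b')\leq 2K$ always. Case (a): if $|v|\leq n|p|+2$, then at most $|v|+n|p|+1$ positions around each $ep^nf$-segment can host an overlapping $v$-segment, giving $(b)\leq K(|v|+n|p|+1)\leq 2K(n|p|+2)<2\delta W$. Case (b): if $|v|>n|p|+2$ and $v$ does not contain $ep^nf$, then a $v$-segment $[j,j+|v|-1]$ of $w$ cannot contain $[a,a+n|p|+1]$ entirely (as this would force $v$ to contain $ep^nf$); this excludes the $|v|-n|p|-1$ positions $j\in[a-|v|+n|p|+2,a]$ and leaves at most $2n|p|+2$ admissible positions per $ep^nf$-segment, so $(b)\leq 2K(n|p|+1)<2\delta W$. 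Case (c): if $v$ contains $ep^nf$, then each $v$-segment sits at a fixed offset from an $ep^nf$-segment, so $A\leq K$ and $A'\leq\langle ep^nf,F_n([w])\rangle\leq K$ (since any $ep^nf$-segment in $F_n([w])$ must terminate at an $ef$ created by the replacement), whence $|A-A'|\leq 2K<2\delta W$.

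Combining these bounds with $A'\leq W'$ and $W-W'=Kn|p|<\delta W$, the triangle inequality yields
\[
\left|\frac{A}{W}-\frac{A'}{W'}\right|\leq\frac{|A-A'|}{W}+A'\left(\frac{1}{W'}-\frac{1}{W}\right)\leq 2\delta+\frac{W-W'}{W}\leq 3\delta,
\]
and choosing $\delta<\varepsilon/3$ completes the proof. The main obstacle is case (b): the naive bound $(b)\leq K(|v|+n|p|+1)$ grows linearly in $|v|$ and destroys uniformity, so one must genuinely exploit the twin conditions that $v$ contains neither $ef$ nor $ep^nf$ to constrain the admissible overlap positions to a region of size $O(n|p|)$ per $ep^nf$-block, independent of $|v|$.
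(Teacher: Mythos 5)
Your proof is correct and follows the same overall route as the paper's: reduce to periodic classes $[\eta_{[w]}]$ with $\langle ef,[w]\rangle=0$ via continuity of $\varphi_n$, Lemma~\ref{lem:current-topology}, Lemma~\ref{lem:seminorms} and Corollary~\ref{cor:rational-approx}, and then compare occurrence counts of $v$ in $w$ and in $\bar{F}_n(w)$, using $|F_n([w])|=|w|-Kn|p|$. Where you genuinely add something is in the count of lost occurrences. The paper bounds the loss per $ep^nf$-segment by $|ep^nf|=n|p|+2$, which accounts for $v$-occurrences starting inside the segment but passes over occurrences starting before the segment and reaching into it; for long, highly periodic $v$ these could a priori number on the order of $|v|$ per block, which is exactly the uniformity issue you flag. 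Your trichotomy repairs this: for $|v|\le n|p|+2$ the positional count suffices; for $|v|>n|p|+2$ with $v$ not containing $ep^nf$, an overlapping occurrence cannot contain the whole block, so its start is confined to $2n|p|+2$ positions; and if $v$ contains $ep^nf$, then $A\le K$ and $A'$ is at most a bounded multiple of $K$. The upshot is the paper's estimate with per-block constant $2n|p|+2$ instead of $n|p|+2$; the paper's generous $7\varepsilon$ slack would absorb this, so the published argument survives, but your case analysis is the honest justification of uniformity in $v$, and your final triangle-inequality split (giving $3\delta$) is cleaner than the paper's ratio manipulation. Two minor blemishes: in case (c) the stated reason for $A'\le\langle ep^nf,F_n([w])\rangle\le K$ is shaky in the type I situation $e=f$ --- the correct statement is that an $ep^nf$-occurrence in $F_n(w)$ must meet a junction in exactly one letter (since $ep^nf$ has no $ef$-factor), which gives $\le 2K$ rather than $\le K$; this still yields $|A-A'|\le\max(A,A')\le 2K$, so nothing breaks. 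Also, the $(b')$-occurrences are not ``new'' (no $v$-occurrence can be created, as it would contain a junction pair $ef$); they are images of boundary-touching old occurrences, but the identity $A-A'=(b)-(b')$ you actually use is correct. Finally, as in the paper, the reduction to periodic measures and the cyclic/wrap-around conventions (type I representatives ending in $ep^n$) are glossed at the same level of detail; your strict margin $3\delta<\varepsilon$ does absorb the approximation error in the limit.
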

\begin{proof}
    Let $n \in \Zz_{>0}$, let $v \in E^*$ such that $v$ does not contain $ef$, and let $0 < \varepsilon < 1/4$. Let $[\mu] \in \Pz M(X_E,\sigma_E)_+$ with
    \[ \langle ef, \mu\rangle=0\ \mbox{ and }\
    \frac{\langle ep^nf, \mu\rangle}{\| \mu \|} < \frac{\varepsilon}{n|p|+2}.
    \]
    It suffices to show 
    \[ \left| \frac{\langle v, \varphi_n(\mu)\rangle}{\|\varphi_n(\mu)\|} - \frac{\langle v,\mu \rangle}{\|\mu\|}\right| < 7\varepsilon. \]
    By continuity of $\varphi_n$, Lemma~\ref{lem:current-topology}, and Corollary~\ref{cor:rational-approx}, choose $[w] \in [E^*_\prim]$ such that 
    \begin{align*}
        &\langle ef, [w] \rangle = 0,\ \frac{\varepsilon}{n|p|+1}|w| \geq 1, \\
        &\left| \frac{\langle v,\mu \rangle}{\|\mu\|} - \frac{\langle v,[w] \rangle}{|w|}\right| < \varepsilon,\ \left| \frac{\langle v,\varphi_n(\mu) \rangle}{\|\varphi_n(\mu)\|} - \frac{\langle v,F_n([w]) \rangle}{|F_n([w])|}\right| < \varepsilon,\ 
    \frac{\langle ep^nf,[w] \rangle}{|w|} < \frac{\varepsilon}{n|p|+2}.
    \end{align*} 
    Fix a good representative $w$ of $[w]$. 
    All $ep^nf$-segments in $w$ shrink to $ef$ by $F_n$. 
    The number of $v$ in each $ep^nf$-segment is bounded by $|ep^nf| =n|p|+2$, so the total number of $v$ in all $ep^nf$-segments in $w$ is bounded by $\varepsilon |w|$.
    In addition, if $\varphi$ is of type I and $w$ ends with $ep^n$, then the last $ep^n$-segment also shrinks to $e$, and the number of $v$ we lose in this way is also bounded by $\varepsilon |w|$. 
    Hence, the ratio of the remaining $v$-segments is at least 
    \[ \frac{\langle v,[w] \rangle}{|w|} - 2\varepsilon > L-3\varepsilon,\]
    where $L = \langle v,\mu \rangle/\|\mu\|$. 
    Thus, 
    \[ (L+\varepsilon)|w| > \langle v,[w] \rangle \geq \langle v,F_n([w]) \rangle > (L-3\varepsilon)|w|.\]
    From the proof of Lemma~\ref{lem:mu-stable2}, the inequality \eqref{eqn:length} is valid here. 
    Hence, 
    \[ \frac{(L+\varepsilon)}{1-2\varepsilon} > \frac{\langle v,F_n([w]) \rangle}{|F_n([w])|} > L-3\varepsilon\]
    Thus, 
    \[
     \left| \frac{\langle v,\mu \rangle}{\|\mu\|} - \frac{\langle v,F_n[w] \rangle}{|F_n([w])|}\right| < 6\varepsilon.  
    \]
     Note that we have used 
    \[ \frac{L+\varepsilon}{1-2\varepsilon} - L \leq \frac{3\varepsilon}{1-2\varepsilon} < 6\varepsilon\]
    since $0 \leq L \leq 1$. 
    Therefore, we conclude that 
    \[ \left| \frac{\langle v, \varphi_n(\mu)\rangle}{\|\varphi_n(\mu)\|} - \frac{\langle v,\mu \rangle}{\|\mu\|}\right| < 6\varepsilon+\varepsilon. \]
\end{proof}

\subsection{The 2-edge-connected and non-$R_2$ case}

We assume that $E$ is 2-edge-connected and $E \not\cong R_2$. 
In this subsection, the proof for strong proximality shall be completed for such an $E$. 

\begin{lemma} \label{lem:allE^2}
    For every $(e,f) \in E^2$, 
    there exists a primitive cycle $p \in E^*_\prim$ at $r(e)=s(f)$ such that $epf$ contains an $e'f'$-segment for all $(e',f') \in E^2 \setminus \{(e,f)\}$, 
    $epf$ does not contain an $ef$-segment, 
    and 
    \[ S(ep^nf,ep^nf) = \begin{cases}
        \{|ep^nf|\} & \text{if } e \neq f, \\
        \{|ep^nf|, |e|\} & \text{if } e = f.
    \end{cases}\]
\end{lemma}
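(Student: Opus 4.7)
My plan proceeds in three steps: build a cycle $p_0$ at $v := r(e) = s(f)$ realising every required segment while avoiding $ef$; upgrade $p_0$ to a primitive $p$ with useful boundary edges; verify the overlap condition via a period analysis.

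By Proposition~\ref{prop:2edge}, $E^{[2]} \setminus ef$ is strongly connected. I would enumerate its edges and stitch together intermediate paths (using strong connectivity to navigate between endpoints) to assemble a walk in $E^{[2]} \setminus ef$ from the vertex $e$ to the vertex $f$ that traverses every edge at least twice. Translating back to $E$ gives a sequence $e, a_1, \ldots, a_{L-1}, f$ whose consecutive pairs all lie in $E^2 \setminus \{(e,f)\}$ and realise every such pair with multiplicity $\geq 2$. Since $2$-edge-connectedness forces $\deg^+(v), \deg^-(v) \geq 2$, one may additionally require $a_1 \notin \{e, f\}$, $a_{L-1} \notin \{e, f\}$, and the wraparound $(a_{L-1}, a_1) \neq (e,f)$. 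Setting $p_0 := a_1 \cdots a_{L-1}$ gives a cycle at $v$ such that for every $n \geq 1$ the word $e p_0^n f$ contains no $ef$-segment, and $e p_0 f$ realises every pair in $E^2 \setminus \{(e,f)\}$.

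Writing $p_0 = q^M$ with $q$ a primitive cycle at $v$, and using that $v$ admits at least two distinct prime cycles, I would pick a prime cycle $s$ at $v$ with $s \neq q$ whose first and last edges lie outside $\{e,f\}$, and insert $s$ into $p_0$ at a position whose pair is realised elsewhere (possible by the double-traversal above) to obtain $p$. Since the lone copy of $s$ in the prime-cycle decomposition of $p$ breaks any would-be proper-power structure, $p$ is primitive. By construction $p_{[0]} = a_1 \notin \{e,f\}$, $p_{[|p|-1]} = a_{L-1} \notin \{e,f\}$, no $ef$-segment appears in $ep^nf$ for any $n \geq 1$, and $epf$ still realises every pair in $E^2 \setminus \{(e,f)\}$.

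Finally, suppose a proper border of length $k$ with $1 < k < n|p|+2$ exists in $ep^nf$, and set $d := n|p|+2-k \in \{1, \ldots, n|p|\}$, so $(ep^nf)_i = (ep^nf)_{i+d}$ for $0 \leq i \leq k-1$. Evaluating at $i = 0$ and $i = k-1$ yields $p_{[(d-1) \bmod |p|]} = e$ and $p_{[(-d) \bmod |p|]} = f$. The residues $d \equiv 0, 1 \pmod{|p|}$ are excluded by the boundary conditions $p_{[0]}, p_{[|p|-1]} \notin \{e,f\}$. For the remaining residues, the internal equations force $p^n$ to have period $d$ in addition to its intrinsic period $|p|$; combining Fine--Wilf with Lemma~\ref{lem:general-cyclic}—and, where Fine--Wilf is tight (notably $n=1$ with $d$ dividing neither $|p|$ nor $|p|+1$), using that a period-$d$ word of length $|p|$ has at most $d$ distinct length-$2$ subwords so that $d \geq |E^2|-1$ must hold for $p$ to realise every required segment—we arrive at a period of $p$ properly dividing $|p|$, contradicting primitivity. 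The length-$1$ border arises only when $e = f$, giving the stated exception. The main obstacle is exactly this overlap analysis in the short regime, where Fine--Wilf is inapplicable and the interplay between the period constraint on $p$, the segment-richness of $p$, and the boundary conditions must be unpacked carefully to rule out all remaining period candidates.
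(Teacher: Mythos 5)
Your reduction to a pair-covering walk in $E^{[2]}\setminus ef$ (via Proposition~\ref{prop:2edge}) is a sensible start, but the argument has a genuine gap exactly where the lemma is hard: the overlap condition for \emph{short} overlaps. Your Fine--Wilf step handles a border of length $k$ of $ep^nf$ only when the associated period $d=n|p|+2-k$ satisfies $d\le (n-1)|p|$, so that the word $p^n$ of length $n|p|$ is long enough ($\ge d+|p|-\gcd(d,|p|)$). For $d>(n-1)|p|$, i.e. $2\le k\le |p|+1$, the border condition gives no periodicity of $p$ at all: it reduces to the single identity $e\,p_{[0,k-2]}=p_{[|p|-k+1,|p|-1]}\,f$, a condition on $p$ alone, independent of $n$. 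Your boundary requirements $p_{[0]},p_{[|p|-1]}\notin\{e,f\}$ only exclude $k=2$ and $k=|p|+1$, and the count of length-$2$ subwords of a $d$-periodic word only rules out \emph{small} $d$, which Fine--Wilf already covers; it says nothing for $3\le k\le |p|$. Since your $p$ contains every two-letter word of $E^2\setminus\{(e,f)\}$, the letters $e$ and $f$ occur throughout $p$, so these short self-overlaps cannot be excluded by counting or by controlling only the first and last letters; one needs a structural device. This is precisely what the paper's construction installs: $p=qr$ is arranged so that a designated two-letter word occurs exactly once in $q$ while the long tail $r$ contains no $e$, so any nontrivial overlap must match that unique occurrence with itself, forcing the overlap length beyond $|p|+1$ and giving a contradiction. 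You yourself flag this ``short regime'' as the main obstacle, so the proposal is incomplete at the decisive step.

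There is also a flaw in the primitivity step. Choosing a prime cycle $s$ at $v$ with $s\neq q$ (where $q$ is the primitive root of $p_0$) does not make the inserted $s$ a ``lone'' prime factor of $p$: because $p_0$ realises essentially every two-letter word, its prime-cycle decomposition will in general already contain every prime cycle available for insertion (for instance in a rose $R_N$ the prime cycles at $v$ are exactly the loops, and all of them occur in $p_0$), so the occurrence count of $s$ in $p$ need not be $1$ and the proper-power obstruction is not established; note also that the relevant comparison is with the prime factors of $p_0$, not with $q$. What you need is a certificate of aperiodicity, e.g.\ some prime cycle or some two-letter word occurring exactly once in $p$ --- again exactly the uniqueness device the paper builds in (the word $ef_k$ or $e_le$ occurring only once), which then serves double duty for both the primitivity and the overlap checks. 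Without such a device, both steps of your proposal remain open.
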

\begin{proof}     
    First, we consider the case $|E^0| \geq 2$. 
    Let $s^{-1}(s(f)) \setminus \{f\} = \{f_1,\dots,f_k\}$. Since $E$ is 2-edge-connected, we have $s^{-1}(s(f)) \setminus \{f\} \neq \emptyset$, and the set $s^{-1}(s(f)) \setminus \{f\}$ contains a non-loop edge, say $f_1$. For each $i=1,\dots,k$, choose a cycle $p_i$ at $s(f)$ starting with $f_i$ and ending with $e$, and $(p_i)_{[0,|p_i|-2]}$ does not contain $e$. Then, the path $ep_1\dots p_k$ contain a unique $ef_i$-segment for all $i=1,\dots,k$, and does not contain an $ef$-segment. 
    Let $S$ be the set of edges $g$ in $r^{-1}(s(e))$ such that the path $ep_1\dots p_k$ does not contain a $ge$-segment. If $S=\emptyset$, then let $q=p_1\dots p_k$. Otherwise, let $S=\{e_1,\dots,e_l\}$. 
    For each $j=1,\dots,l$, choose a path $q_j$ from $s(f)$ to $s(e)$ such that the first edge is $f_1$, the last edge is $e_j$, and $q_j$ does not contain $e$ (note that $e \not\in S$). This is possible because $E \setminus e$ is strongly connected. Let $q=p_1\dots p_k(q_1e)\dots(q_le)$. 
    By construction, either $ef_k$ or $e_le$ appears only once in $q$. 
    Since $E\setminus e$ is strongly connected, choose a cycle $r \in (E \setminus e)^*_\cycle$ with $|r|>|q|$ which starts from $f_1$ and contains an $e'f'$-segment for all $(e',f') \in (E\setminus e)^2$. Let $p=qr$. 
    
    If $s^{-1}(s(f)) \setminus \{f\} = \{f_1\}$, then $S\neq \emptyset$. This is because $s(e)$ receives at least 2 edges since $E$ is 2-edge-connected. Hence, either $ef_k$ or $e_le$ appears only once in $p$, so that $p$ is primitive. 
    Moreover, $p$ does not contain an $ef$-segment. We verify the overlap condition. Suppose we have $(ep^nf)_{[0,k-1]} = (ep^nf)_{[n|p|-k+2,n|p|+1]}$ for some $k >1$. By Proposition~\ref{prop:general-cyclic}, we have $k \leq |p|+1$ since $p$ does not end with $e$. 
    Note that $(ep^nf)_{[n|p|-k+2]} = e$. If $(ep^nf)_{[n|p|-k+2,n|p|+1]} = erf$, then $(ep^nf)_{[0,k-1]}$ contains at least 2 $e$-segments, although $r$ does not contain $e$, which is a contradiction. Hence, both $(ep^nf)_{[0,k-1]}$ and $(ep^nf)_{[n|p|-k+2,n|p|+1]}$ contain  $ef_k$ or $e_le$ which appears only once in $q$. Therefore, the place of $ef_k$ or $e_le$ must be the same, which forces $k > |p|+1$, so we get a contradiction. 

    Next, we consider the case $|E^0|=1$ and $e \neq f$. Let $E^1=\{e_0,\dots,e_N\}$ with $e_0=e,e_N=f$. By assumption, $N \geq 2$, so that $e_1 \neq e,f$. 
    Let 
    \[ q = e_0(e_0e_1)(e_0e_2) \dots (e_0e_{N-1})e_0e_1e_Ne_0. \]
    Then, all subwords of the form $eg$ or $ge$ for some $g \in E^1$ appear in $q$, and there is a unique $e_Ne_0$-segment in $q$. 
    Let $r$ be the product of all $e'f'$ for all $(e',f') \in (E \setminus e)^2$, starting from $e_1$ and ending with $f$. Choose $n$ so that $n|r|>|q|$. 
    Then, we can write $qr^n=epf$. By the same way, we can see that $p$ is primitive and satisfies the overlap conditions. This is because $e$ never appears after the unique $fe$-segment. The proof for the case $|E^0|=1$ and $e=f$ is similar. 
\end{proof}

We define the marker COEs as follows: Let $E^2=\{(e_1,f_1),\dots,(e_N,f_N)\}$. 
For each $i=1,\dots,N$, fix a primitive cycle $p_i$ as in Lemma~\ref{lem:allE^2} for $(e_i,f_i)$.
For $i=1,\dots,N$, let $\varphi_n^{(i)}$ be the type I marker COE for $\m=e_i$ and $D=\{o,p_i^n\}$ if $e_i=f_i$, and 
let $\varphi_n^{(i)}$ be the type II marker COE for $\m=e_i, \m'=f_i$ and $D=\{o, p_i^n\}$ if $e_i \neq f_i$. Then, the overlap conditions are satisfied by Lemma~\ref{lem:allE^2}. 
We can apply observations in Section~\ref{ssec:technical} by Proposition~\ref{prop:2edge}

\begin{lemma} \label{lem:converge-final}
    Let $E \neq R_2$. Then, 
    we have 
    \[ \lim_{n \to \infty} \varphi_n^{(1)}\dots\varphi_n^{(N)}([\mu]) = [\eta_{[p_1]}]\]
    for all $[\mu] \in \Pz M(X_E,\sigma_E)_+$. 
\end{lemma}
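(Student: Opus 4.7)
I would set $i^{*}:=\max\{i\in\{1,\dots,N\} : \langle e_i f_i,\mu\rangle>0\}$, which exists because the cylinders $\cZ(e_i f_i)$ partition $X_E$, so $\sum_{i=1}^{N}\langle e_i f_i,\mu\rangle=\|\mu\|>0$. Fixing representatives and writing $\mu_n^{(i)} := \varphi_n^{(i)}\cdots\varphi_n^{(N)}([\mu])$ for $i\leq N$ and $\mu_n^{(N+1)} := \mu$, the plan is first to show $[\mu_n^{(i^{*})}]\to[\eta_{[p_{i^{*}}]}]$ and then to cascade this convergence down to $[\mu_n^{(1)}]\to[\eta_{[p_1]}]$. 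The main engine for both steps is Lemma~\ref{lem:mu-uniform}, which via Lemma~\ref{lem:singleton}(2) produces convergence to $[\eta_{[p_k]}]$ provided one has a uniform-in-$n$ lower bound on $\langle e_k f_k,\mu_n^{(k+1)}\rangle/\|\mu_n^{(k+1)}\|$.

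The first (and more delicate) stage would be a downward induction on $i\in\{i^{*}+1,\dots,N+1\}$ establishing that, for all sufficiently large $n$, $\mu_n^{(i)}$ satisfies: (a) $\langle e_j f_j,\mu_n^{(i)}\rangle=0$ for every $j$ with $i^{*}<j<i$; (b) $\langle e_{i^{*}}f_{i^{*}},\mu_n^{(i)}\rangle/\|\mu_n^{(i)}\|\geq \delta_i$ for some $\delta_i>0$; (c) $\langle e_j p_j^n f_j,\mu_n^{(i)}\rangle/\|\mu_n^{(i)}\|=o(1/n)$ as $n\to\infty$ for every $j\in\{1,\dots,N\}$. The base case $i=N+1$ is immediate from the definition of $i^{*}$ together with Lemma~\ref{lem:infsum}. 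For the inductive step $i+1\mapsto i$, property (a) at level $i+1$ combined with $i>i^{*}$ gives $\langle e_i f_i,\mu_n^{(i+1)}\rangle=0$, while (c) at level $i+1$ yields $\langle e_i p_i^n f_i,\mu_n^{(i+1)}\rangle/\|\mu_n^{(i+1)}\|<\varepsilon/(n|p_i|+2)$ for $n$ large, so both hypotheses of Lemma~\ref{lem:mu-stable2} (applied with $e=e_i,f=f_i,p=p_i,\varepsilon=1/4$) are met. Its two inequalities, applied to $v=e_j f_j$ and to $v=e_j p_j^n f_j$ respectively and splitting according to whether $v$ contains $e_i f_i$, propagate (a) and (c) down to level $i$ with $O(1)$ multiplicative loss, while Lemma~\ref{lem:mu-stable} yields (b) at level $i$ with $\delta_i\geq \delta_{i+1}/2$, so the $\delta_i$ stay positive through the finitely many steps.

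After the first stage terminates, (b) at $i=i^{*}+1$ provides the lower bound needed to apply Lemma~\ref{lem:mu-uniform} with $p=p_{i^{*}},e=e_{i^{*}},f=f_{i^{*}}$, and together with Lemma~\ref{lem:singleton}(2) this yields $[\mu_n^{(i^{*})}]\to[\eta_{[p_{i^{*}}]}]$. The second stage is then a clean downward induction on $i\in\{1,\dots,i^{*}-1\}$: assuming $[\mu_n^{(i+1)}]\to[\eta_{[p_{i+1}]}]$, Lemma~\ref{lem:seminorms} combined with Lemma~\ref{lem:allE^2} (which forces $p_{i+1}$ to contain at least one $e_i f_i$-segment since $(e_i,f_i)\neq (e_{i+1},f_{i+1})$) gives $\liminf_n \langle e_i f_i,\mu_n^{(i+1)}\rangle/\|\mu_n^{(i+1)}\|>0$, so one further application of Lemma~\ref{lem:mu-uniform} and Lemma~\ref{lem:singleton}(2) produces $[\mu_n^{(i)}]\to[\eta_{[p_i]}]$, and iteration gives the claim. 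The chief technical obstacle will be part (c) of the first induction: the uniform bound $\langle e_j p_j^n f_j,\nu\rangle/\|\nu\|\leq 1/(n|p_j|)$ that one extracts from the summability argument in the proof of Lemma~\ref{lem:infsum} is just barely insufficient to serve as the hypothesis $<\varepsilon/(n|p_j|+2)$ of Lemma~\ref{lem:mu-stable2} for any $\varepsilon<1$, so one cannot work with uniform estimates. Instead, the pointwise $o(1/n)$ decay (valid for the fixed initial $\mu$ by Lemma~\ref{lem:infsum}) has to be carefully transported through each of the $N-i^{*}$ applications of $\varphi_n^{(i)}$ via the ``contains $ef$'' inequality of Lemma~\ref{lem:mu-stable2}.
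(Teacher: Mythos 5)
Your proposal is correct and follows essentially the same route as the paper's proof: isolate the largest index $k$ with $\langle e_kf_k,\mu\rangle>0$, push that mass through the higher-index markers using Lemma~\ref{lem:infsum} together with Lemmas~\ref{lem:mu-stable2} and~\ref{lem:mu-stable} (exactly the "transport the pointwise decay through the contains-$ef$ inequality" device you flag as the technical crux), then cascade down through the lower indices via Lemma~\ref{lem:mu-uniform} and conclude with Lemma~\ref{lem:singleton}. The only differences are bookkeeping: the paper tracks a fixed threshold $\rho$ and explicit constants $2^{N-i}\delta$ instead of your full intermediate convergence combined with Lemma~\ref{lem:seminorms} and the $o(1/n)$ formulation, which is a harmless variation.
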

\begin{proof}
    Let $\varepsilon>0$, $K \in \Zz_{>0}$, and $[\mu] \in \Pz M(X_E,\sigma_E)_+$. 
    Let $1 \leq k \leq N$ be the largest index such that $\langle e_kf_k, \mu\rangle >0$. Choose $\alpha,\varepsilon_0>0$ such that 
    \[ \frac{\langle e_kf_k, \mu\rangle}{\|\mu\|} > \alpha +(N-k)\varepsilon_0.\]
    By Lemma~\ref{lem:mu-stable}, choose $2^{-N}>\delta>0$ such that 
    \[ \left| \frac{\langle e_kf_k, \varphi_n^{(i)}(\nu)\rangle}{\|\varphi_n^{(i)}(\nu)\|} - \frac{\langle e_kf_k,\nu \rangle}{\|\nu\|}\right| < \varepsilon_0 \]
    for all $i \in \{k+1,\dots,N\}$, $n \in \Zz_{>0}$, and $[\nu] \in \Pz M(X_E,\sigma_E)_+$ with
    \[ \langle e_{i}f_{i}, \nu\rangle=0\ \mbox{ and }\
    \frac{\langle e_{i}p_{i}^nf_{i}, \nu\rangle}{\| \nu \|} < \frac{2^{N-k-1}\delta}{n|p_{i}|+2}.
    \]
    By Lemma~\ref{lem:infsum}, choose $M_1>0$ such that 
    \[ \frac{\langle e_Np_N^nf_N, \mu\rangle}{\| \mu \|} < \frac{\delta}{nP+2} \]
    for all $n \geq M_1$, where $\displaystyle P= \max_{1 \leq i\leq N}  |p_i|$. Fix $n \geq M_1$. 
    By Lemma~\ref{lem:mu-stable2}, we have
    \begin{align*}
    &\frac{\langle e_{i}f_{i}, \varphi_n^{(N)}(\mu)\rangle}{\|\varphi_n^{(N)}(\mu)\|} \leq 2\frac{\langle e_{i}f_{i}, \mu\rangle}{\|\mu\|} =0 
    \text{ for } i=k+1,\dots,N-1, \text{ and}\\
    &\frac{\langle e_{N-1}p_{N-1}^nf_{N-1}, \varphi_n^{(N)}(\mu)\rangle}{\| \varphi_n^{(N)}(\mu) \|} 
    \leq 2\frac{\langle e_{N}p^n_Nf_{N}, \mu\rangle}{\|\mu\|}
    < \frac{2\delta}{nP+2} < \frac{2\delta}{n|p_{N-1}|+2} 
    \end{align*}
    since $p_{N-1}$ contains $e_Nf_N$, so that we obtain
    \[ \left| \frac{\langle e_kf_k, (\varphi_n^{(N-1)}\varphi_n^{(N)})(\mu)\rangle}{\|(\varphi_n^{(N-1)}\varphi_n^{(N)})(\mu)\|} - \frac{\langle e_kf_k,\varphi_n^{(N)}(\mu) \rangle}{\|\varphi_n^{(N)}(\mu)\|}\right| < \varepsilon_0. \] 
    Continuing this process, for $i=k+1,\dots,N$, we obtain 
    \begin{align*}
        &\langle e_{j}f_{j}, (\varphi_n^{(i)} \dots\varphi_n^{(N)})(\mu)\rangle =0
        \text{ for } j=k+1,\dots,i-1,\\
        &\frac{\langle e_{i-1}p_{i-1}^nf_{i-1}, (\varphi_n^{(i)} \dots\varphi_n^{(N)})(\mu)\rangle}
        {\| (\varphi_n^{(i)} \dots\varphi_n^{(N)})(\mu) \|} 
         < \frac{2^{N-i}\delta}{n|p_{i-1}|+2}, \text{ and} \\ 
        &\left| \frac{\langle e_kf_k, (\varphi_n^{(i)} \dots\varphi_n^{(N)})(\mu)\rangle}{\|(\varphi_n^{(i)} \dots\varphi_n^{(N)})(\mu)\|} - \frac{\langle e_kf_k, (\varphi_n^{(i+1)} \dots\varphi_n^{(N)})(\mu)\rangle}{\|(\varphi_n^{(i+1)} \dots\varphi_n^{(N)})(\mu)\|}\right| < \varepsilon_0.
    \end{align*}
    Therefore, 
    \[ \frac{\langle e_kf_k, (\varphi_n^{(k+1)} \dots\varphi_n^{(N)})(\mu)\rangle}
    {\|(\varphi_n^{(k+1)} \dots\varphi_n^{(N)})(\mu)\|} 
    > \frac{\langle e_kf_k,\mu \rangle}{\|\mu\|} - (N-k)\varepsilon_0 > \alpha \]
    holds for all $n \geq M_1$. 

    Let $\rho= \min \{ \alpha, (2P)^{-1}\}$. 
    Choose $\varepsilon_1>0$ such that $(1-\varepsilon_1)/P>\rho$. 
    By Lemma~\ref{lem:mu-uniform}, choose $M_2 \in \Zz_{>0}$ such that
    \[ \sum_{j=1}^{|p_i|} \frac{\langle p_i(j), \varphi_n^{(i)}(\nu)\rangle}{\|\varphi_n^{(i)}(\nu)\|} \geq 1-\varepsilon_1 \]
    for all $i\in\{2,\dots,k\}$, $n \geq M_2$ and $[\nu] \in \Pz M(X_E,\sigma_E)_+$ with $\langle e_if_i, [\nu] \rangle \geq \rho\|\nu\|$. 
    Since $p_i$ contains $e_{i-1}f_{i-1}$, we have by Lemma~\ref{lem:subword}
    \[ \frac{\langle e_{i-1}f_{i-1}, \varphi_n^{(i)}(\nu)\rangle}{\|\varphi_n^{(i)}(\nu)\|} \geq \frac{\langle p_i(j), \varphi_n^{(i)}(\nu)\rangle}{\|\varphi_n^{(i)}(\nu)\|}  \]
    for all $j$, so that 
    \[ \frac{\langle e_{i-1}f_{i-1}, \varphi_n^{(i)}(\nu)\rangle}{\|\varphi_n^{(i)}(\nu)\|} \geq \frac{1}{P}\sum_{j=1}^{|p_i|}\frac{\langle p_i(j), \varphi_n^{(i)}(\nu)\rangle}{\|\varphi_n^{(i)}(\nu)\|} \geq \frac{1-\varepsilon_1}{P} \geq \rho, \]
    for $i=2,\dots,k$ and $[\nu]$ with $\langle e_if_i, [\nu] \rangle \geq \rho\|\nu\|$. Hence, we can apply this argument recursively, 
    so that we obtain $\langle e_1f_1, \nu_n \rangle \geq \rho \|\nu_n\|$ for $n \geq \max\{M_1, M_2\}$, where 
    \[ [\nu_n] = (\varphi_n^{(2)}\dots \varphi_n^{(N)})([\mu]).\] 
    
    By Lemma~\ref{lem:mu-uniform}, choose $M>0$ such that $M > \max\{M_1, M_2\}$ and
    \[ \sum_{j=1}^{|p_1|} \frac{\langle p_1(j)^K, \varphi_n^{(1)}(\nu)\rangle}{\|\varphi_n^{(1)}(\nu)\|} \geq 1-\varepsilon \]
    for all $n \geq M$ and $\nu \in \Pz M(X_E,\sigma_E)_+$ with $\langle e_1f_1, [\nu] \rangle \geq \rho\|\nu\|$. 
    By Lemma~\ref{lem:singleton}, the sequence $\varphi_n^{(1)}([\nu_n])$ converges to $[\eta_{[p_1]}]$. 
\end{proof}

\begin{remark} \label{rem:discontinuous}
    From our approach, it is hard to show extreme proximality of $\Out(\cG_E) \acts \Pz M(X_E,\sigma_E)_+$, because the function 
    \[ G([\mu]):= \sum_{n=1}^{\infty} \frac{n|p|\langle ep^nf, \mu\rangle}{\| \mu\|}\]
    from Lemma~\ref{lem:infsum} is not continuous. For example, if $e=f$ and $e$ is a loop, then $ep^ne$ is a primitive cycle, and $[\mu_n] := [\eta_{[ep^ne]}]$ converges to $[\mu]:=[\eta_{[p]}]$. We have 
    \[ G([\mu_n]) \geq \frac{n|p|\langle ep^ne, \mu\rangle}{\| \mu\|} = \frac{n|p|}{n+2}, \]
    but $G([\mu])=0$. 
\end{remark}

\subsection{The general case}

Until here, we have proved strong proximality for 2-edge-connected graphs except for $R_2$. The general case is reduced to the 2-edge-connected case by using classification results of Cuntz--Krieger groupoids. 

\begin{proposition} \label{prop:EKTW}
    For any strongly connected graph $E$ such that $E \not\cong S^1_n$, there exists a 2-edge-connected graph $F$ such that $\cG_E\cong\cG_F$. 
\end{proposition}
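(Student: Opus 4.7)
The strategy is to invoke the classification theorem of Cuntz--Krieger groupoids from \cite{MM14}, which characterises isomorphism of $\cG_E$ and $\cG_F$ in terms of continuous orbit equivalence of the corresponding one-sided SFTs. These equivalence classes are determined by a computable set of algebraic invariants of the adjacency matrix $A_E$ (of Bowen--Franks type), so the problem reduces to showing that every admissible such invariant is realised by a 2-edge-connected graph.

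The first step is a reduction to primitive graphs: using the periodic decomposition described in Section~\ref{ssec:SFT}, every strongly connected graph $E$ which is not a subdivided circle satisfies $\cG_E \cong \cG_{E'}$ for some primitive graph $E'$. This is the ``well-known'' part alluded to by the authors, and follows directly from the periodic decomposition combined with \cite{MM14}.

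The second step, which is the new content, is to upgrade primitivity to 2-edge-connectedness. Starting from a primitive graph $E'$, I would apply a sequence of graph moves known to preserve the isomorphism class of the Cuntz--Krieger groupoid --- in-splittings and out-splittings preserve the one-sided shift up to conjugacy and hence preserve $\cG$, and further moves in the spirit of Eilers--Kim--Tomforde--Whittaker are available for preserving $\cG$ up to isomorphism. Applied at a carefully chosen endpoint of a bridge $e \in (E')^1$, such a move duplicates the structure on one side of $e$, producing alternative paths and killing the bridge property for $e$ without destroying primitivity. Iterating over all bridges (and checking that no new bridges are introduced at each stage) produces a primitive, 2-edge-connected graph $F$ with $\cG_E \cong \cG_F$.

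The main obstacle is the combinatorial bookkeeping in the second step: one must choose the graph moves carefully so that they (i) genuinely eliminate bridges rather than merely shifting them around, and (ii) preserve primitivity (hence strong connectedness) at every intermediate stage. The algebraic content --- that the chosen moves preserve $\cG_E$ up to \emph{isomorphism}, as opposed to mere stable isomorphism or Morita equivalence --- is precisely what the classification theory of Cuntz--Krieger groupoids from \cite{MM14} and its refinements supplies, and this is the source of the additional strength of Proposition~\ref{prop:EKTW} over the well-known ``primitive representative'' statement.
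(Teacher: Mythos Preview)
Your overall framing---reduce to showing every admissible invariant is realised by a 2-edge-connected graph, then invoke the classification \cite{MM14}---matches the paper's strategy. But the paper's execution is quite different and more direct than your two-step programme. The paper does not pass through a primitive representative, nor does it perform graph moves on $E$. Instead, it computes the invariants $\H_0(\cG_E)\cong\Zz^n\oplus\bigoplus_i\Zz/m_i\Zz$ and $\H_1(\cG_E)$, then invokes the explicit matrix construction of Eilers--Katsura--Tomforde--West \cite[Proposition~3.6]{EKTW16} (not Eilers--Kim--Tomforde--Whittaker) to write down a concrete $(k+n+1)\times(k+n+1)$ matrix $B$ with $\Coker B\cong\H_0(\cG_E)$, $\Ker B\cong\H_1(\cG_E)$, and the correct unit class. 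After a sign adjustment, $A_F:=B^t+I$ has all strictly positive entries, so when $k+n\geq 2$ the graph $F$ is automatically 2-edge-connected; the remaining case $k+n\leq 1$ is handled by passing to $F^{[2]}$. The classification theorem then gives $\cG_E\cong\cG_F$.

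Your step~2 has a genuine gap. In- and out-splittings do not in general eliminate bridges: at a vertex adjacent to a bridge there is typically only one edge to split, so the move is trivial there. For a concrete obstruction, take the graph for $V_{n,r}$ with $r\geq 2$ from Section~\ref{ssec:HT}: every edge $v_i\to v_{i+1}$ is a bridge, and neither in- nor out-splitting at its endpoints produces an alternative path. You gesture at ``further moves,'' but without specifying them and verifying they preserve $\cG_E$ up to \emph{isomorphism} (not just stable isomorphism), this is not a proof. The paper's approach sidesteps all of this by building $F$ from scratch so that 2-edge-connectedness is automatic from positivity of the entries of $A_F$.
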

\begin{proof}
    Let $A_E$ be the adjacency matrix of $E$. Let 
    \[ \H_0(\cG_E) = \Zz^n \oplus \bigoplus_{i=1}^k \Zz/m_k\Zz,\]
    where $m_i|m_{i+1}$. 
    Then, by the proof of \cite[Proposition~3.6]{EKTW16}, there exist $b_1,\dots,b_{n+k} \geq 1$ such that the matrix
    \[ B= \begin{bmatrix}
        1 & 1 & 1 &  \dots & 1 \\
        b_1 & b_1+m_1 & b_1 & \dots &b_1 \\
        \vdots & \vdots & \ddots & \vdots & \vdots \\
        b_{k+n} & b_{k+n} & b_{k+n} & \dots & b_{k+n}
    \end{bmatrix}
    \]
    satisfies $\Coker B \cong \H_0(\cG_E)$, $\Ker B \cong \H_1(\cG_E)$, and $[{\bf 1}] = [1_{E}]_0$ modulo $\Im B$, where ${\bf 1}$ denotes the vector whose entries are all $1$. Let
    \[
    C= 
        \newsavebox{\leftblock}
        \sbox{\leftblock}{$\begin{matrix}0 & 1 \\[2pt] 1 & 0\end{matrix}$}
        \left[
        \begin{array}{@{}c|c@{}}
        \usebox{\leftblock} &
        \makebox[\wd\leftblock][c]{\raisebox{0pt}[\ht\leftblock][\dp\leftblock]{\Large 0}}
        \\ \hline
        \makebox[\wd\leftblock][c]{\raisebox{0pt}[\ht\leftblock][\dp\leftblock]{\Large 0}} &
        \makebox[\wd\leftblock][c]{\raisebox{0pt}[\ht\leftblock][\dp\leftblock]{\Large I}}
        \end{array}
        \right],
    \]
    where the right-bottom $I$ is the identity matrix of size $k+n-1$. 
    Let $A_F = B^t+I$ if $\sgn(\det(B)) =\sgn(\det(A_E-I))$, and let $A_F=(BC)^t + I$ if $\sgn(\det(B)) \neq \sgn(\det(A_E-I))$. Let $F$ be the graph whose adjacency matrix is $A_F$. Then, by \cite[Theorem~9.6.1]{MatBook}, we have $\cG_E \cong \cG_F$. 
    Note that since $\Im B = \Im BC$, multiplying $C$ to the right does not change $\H_0(\cG_E) = \Coker (I-A_F^t)$ nor the position of the unit. 

    If $k+n \geq 2$, then $F$ is 2-edge-connected since all entries of $A_F$ are positive and $|F^{0}| \geq 3$. Suppose $k+n=1$. Then, $A_F$ is of the form 
    \[ B=\begin{bmatrix}
        a & b \\ c & d
    \end{bmatrix}
    \]
    with $a,d \geq 2$ and $b,c \geq 1$. We can directly see that the 2nd power of $F$ is 2-edge-connected. Finally, the case $k=n=0$ is already covered by letting $n=0,k=1$, and $m_k=1$ (note that the construction of \cite[Proposition~3.6]{EKTW16} works even if $m_1=1$). 
\end{proof}

\begin{proposition} \label{prop:strongly-proximal}
    For any strongly connected graph $E$ such that $E \not\cong S^1_n$, there exists a sequence $\varphi_n \in \COE(X_E,\sigma_E)$ and $[p] \in [E^*_{\prim}]$ such that $\varphi_n([\mu])$ converges to $[\eta_{[p]}]$ for all $[\mu] \in \Pz M(X_E,\sigma_E)_+$. 
\end{proposition}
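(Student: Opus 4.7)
The plan is to reduce the general case to the 2-edge-connected, non-$R_2$ case already established in Lemma~\ref{lem:converge-final} by invoking the classification statement Proposition~\ref{prop:EKTW}, and then transport the resulting sequence back to $X_E$ along the groupoid isomorphism.

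First, I would apply Proposition~\ref{prop:EKTW} to obtain a 2-edge-connected graph $F$ together with a groupoid isomorphism $\alpha \colon \cG_E \xrightarrow{\cong} \cG_F$. A brief inspection of the construction in the proof of Proposition~\ref{prop:EKTW} shows that this $F$ has at least three vertices: when $k+n \geq 2$, the matrix $B$ has size $k+n+1 \geq 3$; when $k+n \leq 1$, one passes to the higher edge graph $F^{[2]}$ (using $\cG_F \cong \cG_{F^{[2]}}$), and the resulting graph again has at least three vertices. In particular, $F \not\cong R_2$, and $F \not\cong S^1_n$ for any $n$, since $S^1_n$ is not 2-edge-connected.

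One is therefore in a position to apply Lemma~\ref{lem:converge-final} to $F$, producing an explicit sequence $\psi_n \in \COE(X_F,\sigma_F)$ and a class $[q] \in [F^*_\prim]$ with $\psi_n([\nu]) \to [\eta_{[q]}]$ for every $[\nu] \in \Pz M(X_F,\sigma_F)_+$. Identifying $\Aut(\cG_E) \cong \COE(X_E,\sigma_E)$ and $\Aut(\cG_F) \cong \COE(X_F,\sigma_F)$ via Proposition~\ref{prop:Aut=COE}, one then sets $\varphi_n := \alpha^{-1} \circ \psi_n \circ \alpha \in \COE(X_E,\sigma_E)$, where composition is taken in the groupoid automorphism groups. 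Using functoriality of the induced map on projective measure spaces along COEs (Remark~\ref{rem:EF}) together with Proposition~\ref{prop:varphieta[p]}, for every $[\mu] \in \Pz M(X_E,\sigma_E)_+$ one computes
\[
\varphi_n([\mu]) \;=\; \alpha^{-1}\!\bigl(\psi_n(\alpha([\mu]))\bigr) \;\xrightarrow{n \to \infty}\; \alpha^{-1}([\eta_{[q]}]) \;=\; [\eta_{\alpha^{-1}[q]}],
\]
so setting $[p] := \alpha^{-1}[q] \in [E^*_\prim]$ yields the required common limit class, independent of $[\mu]$.

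The only delicate point I expect is confirming that the 2-edge-connected graph produced by Proposition~\ref{prop:EKTW} is never isomorphic to $R_2$—the single exception excluded by the hypothesis of Lemma~\ref{lem:converge-final}. Once this vertex-count observation is secured, the remainder of the argument is a purely formal transport of Lemma~\ref{lem:converge-final} along the isomorphism $\alpha$, with no new dynamical input required.
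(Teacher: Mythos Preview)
Your proof is correct and follows essentially the same route as the paper: invoke Proposition~\ref{prop:EKTW} to replace $E$ by a 2-edge-connected graph $F$ with $\cG_E \cong \cG_F$, apply Lemma~\ref{lem:converge-final} on the $F$ side, and conjugate the resulting sequence back to $X_E$ along the isomorphism, using Proposition~\ref{prop:varphieta[p]} to identify the limit periodic measure. The paper phrases the transport in terms of a COE $\psi\colon X_F\to X_E$ (citing the Matsumoto--Matui classification), whereas you work directly with the groupoid isomorphism and Proposition~\ref{prop:Aut=COE}; these are the same thing. Your explicit verification that the graph produced by Proposition~\ref{prop:EKTW} is never $R_2$ (because it always has at least two vertices, or more after passing to the higher edge graph) is a point the paper leaves implicit but which is indeed needed to invoke Lemma~\ref{lem:converge-final}.
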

\begin{proof}
    By Proposition~\ref{prop:EKTW}, there exists a 2-edge-connected graph $F$ such that $\cG_E \cong \cG_F$. By \cite[Theorem~9.6.1]{MatBook}, there exists a COE $\psi \colon (X_F,\sigma_F) \to (X_E,\sigma_E)$. By Lemma~\ref{lem:converge-final}, there exists a sequence $\varphi_n \in \COE(X_F, \sigma_F)$ and $[p] \in [F^*_{\prim}]$ such that $\varphi_n([\mu])$ converges to $[\eta_{[p]}]$ for all $[\mu] \in \Pz M(X_F,\sigma_F)_+$.  
    Let $\psi[p]=[q]$ for $[q] \in [E^*_\prim]$. 
    Then, $\psi\eta_{[p]}=\eta_{[q]}$ by Proposition~\ref{prop:varphieta[p]}. Since $\psi \colon \Pz M(X_F,\sigma_F)_+ \to \Pz M(X_E,\sigma_E)_+$ is a homeomorphism, we see that $\psi \circ \varphi_n \circ \psi^{-1}([\mu])$ converges to $[\eta_{[q]}]$ for all $[\mu] \in \Pz M(X_E,\sigma_E)_+$. 
\end{proof}

\begin{proof}[Proof of Theorem~\ref{thm:main}]
Strong proximality and minimality of the action $\Out(\cG_E) \acts \Pz M(X_E,\sigma_E)_+$ follow from Proposition~\ref{prop:strongly-proximal} and Proposition~\ref{prop:transitive} by a standard argument using the dominated convergence theorem.  

Let $\{[p_1], [p_2],\dots\}$ be an enumeration of $[E^*_\prim]$ obtained by arranging the elements of $[E^*_\prim]$ in increasing order of word length. Let 
\[ \mu:= \sum_{i=1}^\infty\frac{1}{2^i}\eta_{[p_i]}\in M(X_E,\sigma_E)_+.\]
Note that the infinite sum converges in norm because the growth of $\|\eta_{[p_i]}\|=|p_i|$ is linear. 
Let $\alpha \in \Aut(\G_E)$, and suppose $\alpha[\mu]=\mu$. Then, we have $\alpha(\mu)=r\mu$ for some $r>0$, so that 
\[ \alpha(\mu)=\sum_{i=1}^\infty\frac{1}{2^i}\eta_{\alpha[p_i]} = \sum_{i=1}^\infty\frac{r}{2^i}\eta_{[p_i]}\]
by Proposition~\ref{prop:varphieta[p]}. 
Define $F \colon E^*_\prim \to \Rz$ by $F(p) = \alpha(\mu)(\{p^\infty\})$, and let 
$A \subseteq \Rz$ be the range of $F$. Enumerate $A=\{a_1,a_2,\dots\}$ with $a_1 > a_2 > \dots$. Then, we see that $\alpha[p_i] = F^{-1}(a_i) = [p_i]$ in $E^*_\prim$. Hence, $\alpha[p_i]=[p_i]$ for all $i$, so that $\alpha$ is inner by Corollary~\ref{cor:faithful-on-ep}. 
This shows topological freeness. 
Therefore, the action of $\Out(\F(\cG_E)) \cong \Out(\cG_E)$ on $\Pz M(X_E,\sigma_E)_+$ is a topologically free boundary action by Proposition~\ref{prop:aut}. 
Now C*-simplicity follows from Theorem~\ref{thm:KK}. 

Note that periodic measures are ergodic, so that they are extremal points of the simplex $P(X_E,\sigma_E)$. Hence, Proposition~\ref{prop:periodicmeasures} implies $\Pz M(X_E,\sigma_E)_+$ is homeomorphic to the Poulsen simplex. Now $\Pz M(X_E,\sigma_E)_+$ is homeomorphic to the Hilbert cube by \cite[Theorem~3.1]{LOS78}.
\end{proof}

\subsection{Some remarks on strong proximality} \label{ssec:remark}
In the proof of strong proximality in Proposition~\ref{prop:strongly-proximal}, we used the classification result by Matsumoto \cite[Theorem~9.6.1]{MatBook}, which sits within the long historical development of the classification of Kirchberg algebras. For readers who may not be familiar with operator algebras, we collect outlines of direct proofs of strong proximality of the actions of $\Out(V_2)$ and $\Out(V_{n,r})$ without using the classification result. 

Let us consider the case $E=R_2$, and let $E^1=\{ e,f\}$. 
Let $\varphi_n^{(1)}$, $\varphi_n^{(2)}$ be the type I marker COEs for $\m = e$, $D=\{o,f^n\}$ and for $\m=f$, $D=\{o,e^n\}$, respectively. In addition, let $\varphi^{(3)}_n$ be the type I marker COE for $\m=ef$, $D=\{o,f^n\}$. 
Note that the overlap conditions for $\varphi_n^{(3)}$ are satisfied because the first $eff$ is the unique $eff$-segment in $eff^nef$. Then, the same proof works. The key point is to apply Lemma~\ref{lem:embedding} for $N=4$ and $w=efef$ since $E^{[4]} \setminus efef$ is strongly connected. Hence, the action of $\Out(\cG_E)=\Out(V_2)$ on $\Pz M(X_E,\sigma_E)_+$ is strongly proximal. 

Now let $E$ be the graph as in Section~\ref{ssec:HT}, and let $F=E_0 \cong R_n$. Then, we see that $\cG_E \cong \cG_F \times \cR_r$, where 
$\cR_r = \{0,\dots,r-1\}^2$ 
is the full equivalence relation on the $r$-point set (note that the multiplication is defined by $(i,j)(j,k)=(i,k)$). Indeed, for $k=0,\dots,r-1$, let 
\[ U_k = \{ (\sigma_E^k(x), -k, x) \in \cG_E \mid x \in X_0\}. \]
Define $\alpha \colon \cG_F \times \cR_r \to \cG_E$ by 
$\alpha(g, (i,j)) = U_igU_j^{-1}$.
Then, we can see that $\alpha$ is an isomorphism. Then, we have an embedding $\Out(\cG_F) \to \Out(\cG_E)$ by $\beta \mapsto \beta \times \id$. We see that this embedding makes the isomorphism $\H^1(\cG_E) \cong \H^1(\cG_F)$ in Proposition~\ref{prop:H1isom} (see below) equivariant, so that it makes the isomorphism $\Pz M(X_F,\sigma_F)_+ \cong \Pz M(X_E,\sigma_E)_+$ equivariant. Therefore, we can find a sequence $g_n \in \Out(\cG_E) = \Out(V_{n,r})$ as in Proposition~\ref{prop:strongly-proximal} from the image of $\Out(\cG_F)$. 

The following well-known fact can be easily deduced from the proof of \cite[Theorem~3.6]{Mat12}.
\begin{proposition} \label{prop:H1isom}
    If $U\subseteq \cG^{(0)}$ is a full clopen set, then the restriction map $\Hom(\cG,\Zz)\to\Hom(\cG_U^U,\Zz)$ induces an isomorphism $\H^1(\cG)\cong\H^1(\cG_U^U)$.
\end{proposition}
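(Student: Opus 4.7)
The plan is to realise the isomorphism explicitly by constructing a one-sided inverse to the restriction map $\Hom(\cG,\Zz)\to\Hom(\cG_U^U,\Zz)$ using a finite partition of $\cG^{(0)}$ by clopen bisections ending in $U$. First, I would use fullness of $U$, together with compactness of $\cG^{(0)}$ and ampleness of $\cG$, to obtain a finite clopen partition $\cG^{(0)}=\s(V_1)\sqcup\dots\sqcup\s(V_n)$, where each $V_i$ is a clopen bisection with $\r(V_i)\subseteq U$, and where I arrange that $V_1=U$ itself. For $x\in\s(V_i)$, let $v_i(x)\in V_i$ denote the unique element with $\s(v_i(x))=x$, and set $\beta_i(x):=\r(v_i(x))\in U$. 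Then for every $g\in\cG$ with $\s(g)\in\s(V_i)$ and $\r(g)\in\s(V_j)$, the conjugate $v_j(\r(g))\,g\,v_i(\s(g))^{-1}$ lies in $\cG_U^U$.

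Using this, I would define an extension map $\Hom(\cG_U^U,\Zz)\to\Hom(\cG,\Zz)$ by
\[
\tilde\rho(g)\;:=\;\rho\!\left(v_j(\r(g))\,g\,v_i(\s(g))^{-1}\right).
\]
The verifications are (a) continuity of $\tilde\rho$, which follows because the pieces $\{g\in\cG:\s(g)\in\s(V_i),\;\r(g)\in\s(V_j)\}$ are clopen and the assignment $g\mapsto v_j(\r(g))\,g\,v_i(\s(g))^{-1}$ is a continuous composition of bisection maps on each piece; and (b) the cocycle property, which is a short telescoping computation using $v_j(y)^{-1}v_j(y)=y$ for composable products. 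Because $V_1=U$ is the identity bisection on $U$, we have $v_1(x)=x$ on $U$, so $\tilde\rho$ restricts to $\rho$ on $\cG_U^U$; in particular, the restriction map is already surjective at the level of $\Hom$.

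For the descent to $\H^1$, restriction trivially sends coboundaries to coboundaries. Conversely, suppose $\rho\in\Hom(\cG,\Zz)$ restricts to $\partial\xi$ on $\cG_U^U$ for some $\xi\in C(U,\Zz)$. I would define $\tilde\xi\in C(\cG^{(0)},\Zz)$ by
\[
\tilde\xi(x)\;:=\;\xi(\beta_i(x))-\rho(v_i(x))\qquad\text{for }x\in\s(V_i);
\]
this is continuous since the $\s(V_i)$ are clopen, and on $U$ it coincides with $\xi$ by the choice $V_1=U$. Applying the cocycle identity $\rho(v_j(y)\,g\,v_i(x)^{-1})=\rho(v_j(y))+\rho(g)-\rho(v_i(x))$ together with the hypothesis $\rho|_{\cG_U^U}=\partial\xi$ to the element $v_j(\r(g))\,g\,v_i(\s(g))^{-1}\in\cG_U^U$ yields $\rho(g)=\partial\tilde\xi(g)$, so $\rho$ is itself a coboundary. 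This proves injectivity on cohomology, and combined with the surjective extension from the previous paragraph, gives the claimed isomorphism.

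Conceptually there is no real obstacle: the result is essentially the statement that restricting an ample groupoid to a full clopen subset is a Morita equivalence, which induces an isomorphism on cohomology. The main care needed is bookkeeping — arranging the bisection partition so that $V_1=U$ (so that $\tilde\rho$ genuinely restricts to $\rho$ and $\tilde\xi$ genuinely extends $\xi$), and tracking signs in the two cocycle/coboundary identities without error.
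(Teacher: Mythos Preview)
Your proof is correct and takes essentially the same approach as the paper: your bisection assignment $x\mapsto v_i(x)$ is precisely (the inverse of) the continuous section $\theta\colon\cG^{(0)}\to\cG$ the paper invokes from Matui, and your conjugation $g\mapsto v_j(\r(g))\,g\,v_i(\s(g))^{-1}$ is the paper's retraction $\rho\colon\cG\to\cG_U^U$. The only organisational difference is that you argue surjectivity and injectivity on $\H^1$ separately (the latter by exhibiting $\tilde\xi$), whereas the paper packages the same computation as checking that $\omega\mapsto\omega\circ\rho$ is a two-sided inverse to restriction on cohomology.
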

\begin{proof}
    Let $\theta\colon\cG^{(0)}\to\cG$ be a continuous map such that $\r(\theta(x))=x$ and $\s(\theta(x))\in U$ for all $x\in\cG^{(0)}$. Such a map exists by the proof of \cite[Theorem~3.6(2)]{Mat12}. Let $\rho\colon \cG\to\cG_U^U$ be the homomorphism given by $\rho(g)\coloneqq \theta(\r(g))^{-1}g\theta(\s(g))$ (the map $\rho$ is from the proof of \cite[Theorem~3.6]{Mat12}). 

    It is not difficult to check that the homomorphism $\Hom(\cG_U^U,\Zz)\to\Hom(\cG,\Zz)$ given by $\omega\mapsto \omega\circ\rho$ maps coboundaries to coboundaries and thus induces a homomorphism $\H^1(\cG_U^U)\to\H^1(\cG)$. We show that it is the inverse of the map induced by restriction. For $\omega\in\Hom(\cG,\Zz)$, we have $\omega\vert_{\cG_U^U}\circ\rho=\omega+\omega\circ\theta\circ\s-\omega\circ\theta\circ\r$, so that $[\omega\vert_{\cG_U^U}]=[\omega]$ in $\H^1(\cG)$. Similarly, $[(\omega\circ\rho)\vert_{\cG_U^U}]=\omega$ for all $\omega\in\Hom(\cG_U^U,\Zz)$.
\end{proof}

\section{Consequences}

\subsection{Brin--Thompson groups}
For $d\geq 1$, let $dV=dV_2$ denote the associated Brin--Thompson group from \cite{Brin04}.

\begin{lemma} \label{lem:wreath}
    Let $H$ be a non-trivial C*-simple group, and let $G=H\wr \fS_d$. Then, $G$ is C*-simple.
\end{lemma}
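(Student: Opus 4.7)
My plan is to invoke Theorem~\ref{thm:KK} by constructing an explicit topologically free boundary action of $G$. Since $H$ is non-trivial and C*-simple, the same theorem provides a topologically free boundary action $H\acts X$ on some compact Hausdorff space, and $H$ is automatically infinite because finite C*-simple groups are trivial. The candidate is the natural action of $G=H\wr\mathfrak{S}_d=H^d\rtimes\mathfrak{S}_d$ on $X^d$ given by
\[
    ((h_1,\ldots,h_d),\sigma)\cdot(x_1,\ldots,x_d)=(h_1 x_{\sigma^{-1}(1)},\ldots,h_d x_{\sigma^{-1}(d)}).
\]

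Minimality and strong proximality would be inherited from the diagonal subaction of the normal subgroup $H^d\leq G$: the product of minimal actions is minimal, and strong proximality of the product follows from an iterated weak-$*$ accumulation argument that concentrates one marginal at a time, using strong proximality of $H\acts X$ in each coordinate. Both properties transfer from $H^d$ up to $G$.

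The main step is topological freeness of $G\acts X^d$, and here I expect the permutation factor to be the only real source of difficulty. The key preliminary observation is that every $H$-orbit in $X$ is meager: no singleton can be open, because minimality would then force $X$ finite and the faithful action of $H$ to factor through a finite symmetric group, contradicting $|H|=\infty$; hence every singleton is nowhere dense and each $H$-orbit, being countable, is meager. Since $X$ is a Baire space (compact Hausdorff) and $\{x : \Stab_H(x)=\{e\}\}$ is comeager (it is the intersection over $h\neq e$ of the open dense sets $\{x : hx\neq x\}$), I can choose inductively, inside any non-empty open box $U_1\times\cdots\times U_d\subseteq X^d$, a tuple $(x_1,\ldots,x_d)$ with each $x_i\in U_i$ satisfying $\Stab_H(x_i)=\{e\}$ and with pairwise disjoint orbits $Hx_1,\ldots,Hx_d$, by avoiding at each step the meager union of the previously chosen orbits.

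Finally, such a tuple must have trivial $G$-stabilizer: if $((h_1,\ldots,h_d),\sigma)$ fixes $(x_1,\ldots,x_d)$, then $h_i x_{\sigma^{-1}(i)}=x_i$ forces $x_i\in Hx_{\sigma^{-1}(i)}$ for every $i$; pairwise disjointness of the orbits then forces $\sigma=e$, whereupon each $h_i$ lies in $\Stab_H(x_i)=\{e\}$. This gives density of trivially stabilized points in $X^d$, so $G\acts X^d$ is a topologically free boundary action and $G$ is C*-simple. The heart of the argument is the stabilizer analysis in the previous paragraph, which is precisely what requires the meagerness of $H$-orbits so that coordinates lying in pairwise different $H$-orbits can be chosen from arbitrary open boxes.
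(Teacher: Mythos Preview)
Your proof is correct and takes a genuinely different route from the paper's. The paper argues algebraically: by \cite[Proposition~19(i)]{delaHarpe07} the normal subgroup $H^d$ is already C*-simple, and then by \cite[Proposition~19(v)]{delaHarpe07} (cf.\ \cite[Theorem~1.4]{BKKO17}) it suffices to check that the centraliser of $H^d$ in $G=H^d\rtimes\fS_d$ is trivial, which is a short direct computation using only that $H$ has trivial centre and $|H|=\infty$. Your approach instead builds an explicit topologically free $G$-boundary $X^d$ out of a given $H$-boundary $X$, with the Baire-category step producing tuples whose coordinates have trivial $H$-stabiliser and lie in pairwise distinct $H$-orbits. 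The paper's route is shorter and does not need $H$ to be countable (your meagerness-of-orbits and comeagre-free-locus steps both use countability of $H$, though this is harmless for the intended application to $H=\Out(\cG_E)$), whereas yours has the advantage of exhibiting a concrete second-countable boundary for $G$ rather than invoking abstract permanence properties.
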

\begin{proof}
    By \cite[Proposition~19(i)]{delaHarpe07}, the group $H^d$ is C*-simple. Thus, by \cite[Proposition~19(v)]{delaHarpe07}, it suffices to show that the centraliser of $H^d$ in $H\wr \fS_d = H^d \rtimes \fS_d$ is trivial (cf. \cite[Theorem~1.4]{BKKO17}). 
    Suppose $(\alpha_1,...,\alpha_d)\tau\in H\wr \fS_d$ centralises $H^d$. Then, we have 
    \[ (\beta_1,...,\beta_d)(\alpha_1,...,\alpha_d)\tau=(\alpha_1,...,\alpha_d)\tau(\beta_1,...,\beta_d)=(\alpha_1,...,\alpha_d)(\beta_{\tau(1)},...,\beta_{\tau(d)})\tau, \]
    so that $\beta_i\alpha_i=\alpha_i\beta_{\tau(i)}$ for all $(\beta_1,\dots,\beta_d) \in H^d$. Since $H$ has trivial centre, we have $\alpha_i=1$ for all $i$ by letting $\beta_1=\dots=\beta_d=\beta \in H$. Hence, we have $\beta_{i}=\beta_{\tau(i)}$ for all $(\beta_1,\dots,\beta_d) \in H^d$. Since $|H| = \infty$, we conclude that $\tau=1$. 
\end{proof}
\begin{corollary}
    For all $d\geq 1$, the outer automorphism group of the Brin--Thompson group $\Out(dV)$ is C*-simple.
\end{corollary}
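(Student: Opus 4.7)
The plan is to reduce the statement to Lemma~\ref{lem:wreath} via a known identification of $\Out(dV)$ as a wreath product involving $\Out(V)$.

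First, I would invoke the structural theorem for automorphism groups of Brin--Thompson groups, which gives a canonical isomorphism
\[
\Out(dV)\;\cong\;\Out(V)\wr \mathfrak{S}_d.
\]
This is proved in the literature on the algebraic structure of the groups $dV$ (the symmetric factor arises from permuting the $d$ tensor coordinates of the Cantor cube $\{0,1\}^{\Nz d}$, and the base factor from acting independently on each coordinate). For $d=1$ the statement is trivial, and the case $d\geq 2$ is the content of the cited structure theorem.

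Next, by Theorem~\ref{thm:main} applied to the graph $E=R_2$, the group $\Out(V)=\Out(\F(\cG_{R_2}))$ is C*-simple. In particular, it is non-trivial (for instance by Proposition~\ref{prop:transitive} combined with faithfulness in Corollary~\ref{cor:faithful-on-ep}, there are many non-inner elements), so Lemma~\ref{lem:wreath} applies with $H=\Out(V)$. This yields that $\Out(V)\wr\mathfrak{S}_d$ is C*-simple, and transporting C*-simplicity along the isomorphism above gives C*-simplicity of $\Out(dV)$.

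The only non-routine ingredient is the wreath product description of $\Out(dV)$, which must be imported from outside this paper; everything else is a direct composition of results already established in the excerpt (Theorem~\ref{thm:main} for the C*-simplicity of $\Out(V)$, and Lemma~\ref{lem:wreath} for the stability of C*-simplicity under wreath products with finite symmetric groups).
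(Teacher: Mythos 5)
Your proposal is correct and follows essentially the same route as the paper: the wreath-product description $\Out(dV)\cong\Out(V)\wr\fS_d$ that you import from the literature is precisely \cite[Theorem~1.1]{Elliott23}, which the paper cites, and the rest (Theorem~\ref{thm:main} for $E=R_2$ plus Lemma~\ref{lem:wreath}) matches the paper's one-line argument. Your extra remark that $\Out(V)$ is non-trivial is a correct and worthwhile check of the hypothesis of Lemma~\ref{lem:wreath}, which the paper leaves implicit.
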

\begin{proof}
    This follows from Theorem~\ref{thm:main}, Lemma~\ref{lem:wreath}, and \cite[Theorem~1.1]{Elliott23}. 
\end{proof}

\subsection{Amenable radicals of commutator subgroups of TFGs}

The following proposition is proved in the same way as Proposition~\ref{prop:aut} using \cite[Proposition~3.6]{Mat15}.
\begin{proposition}
\label{prop:Aut(F)=Aut(D)}
 Assume $\cG^{(0)}$ is a Cantor set. Then, the restriction map $\res\colon\Aut(\F(\cG))\to \Aut(\D(\cG))$ is a group isomorphism.
\end{proposition}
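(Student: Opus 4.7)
The plan is to mimic the proof of Proposition~\ref{prop:aut}, replacing \cite[Theorem~3.5]{Mat15} by \cite[Proposition~3.6]{Mat15}. First, the restriction map is well-defined because $\D(\cG)=[\F(\cG),\F(\cG)]$ is the commutator subgroup of $\F(\cG)$, hence characteristic: any $\Phi\in\Aut(\F(\cG))$ satisfies $\Phi(\D(\cG))=\D(\cG)$.

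For injectivity, suppose $\Phi\in\Aut(\F(\cG))$ restricts to the identity on $\D(\cG)$. By Proposition~\ref{prop:aut}, we may write $\Phi=\alpha_\F$ for a unique $\alpha\in\Aut(\cG)$, so by the proof of \cite[Proposition~3.8]{Mat15} there is a $\varphi\in\Homeo(\cG^{(0)})$ with $\pi_{\alpha(U)}=\varphi\circ\pi_U\circ\varphi^{-1}$ for every $U\in\F(\cG)$. Restricting to $U\in\D(\cG)$ and using $\alpha(U)=U$ forces $\pi_U=\varphi\circ\pi_U\circ\varphi^{-1}$ on $\D(\cG)$; the uniqueness clause of \cite[Proposition~3.6]{Mat15} then gives $\varphi=\id_{\cG^{(0)}}$, whence $\pi_{\alpha(U)}=\pi_U$ for all $U\in\F(\cG)$, and thus $\alpha=\id$ by the uniqueness in \cite[Proposition~3.8]{Mat15}. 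Therefore $\Phi=\id$.

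For surjectivity, let $\Psi\in\Aut(\D(\cG))$. By \cite[Proposition~3.6]{Mat15}, there exists $\varphi\in\Homeo(\cG^{(0)})$ such that $\pi_{\Psi(U)}=\varphi\circ\pi_U\circ\varphi^{-1}$ for every $U\in\D(\cG)$. Exactly as in the proof of Proposition~\ref{prop:aut}(i), the proof of \cite[Proposition~3.8]{Mat15} produces a unique $\alpha\in\Aut(\cG)$ satisfying $\pi_{\alpha(U)}=\varphi\circ\pi_U\circ\varphi^{-1}$ for all $U\in\F(\cG)$. Then $\alpha_\F\in\Aut(\F(\cG))$ satisfies $\pi_{\alpha_\F(U)}=\pi_{\Psi(U)}$ for every $U\in\D(\cG)$; since elements of $\D(\cG)$ (being full bisections) are determined by their induced homeomorphisms together with the constraint of lying in $\D(\cG)$, and $\alpha_\F(\D(\cG))=\D(\cG)$ by the characteristic property, we conclude $\alpha_\F|_{\D(\cG)}=\Psi$.

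The main technical point to verify carefully is that \cite[Proposition~3.6]{Mat15} indeed associates to an automorphism of $\D(\cG)$ a (unique) homeomorphism $\varphi$ of $\cG^{(0)}$ intertwining the $\pi_U$'s; this is the analogue for $\D(\cG)$ of Matui's reconstruction result for $\F(\cG)$ and replaces \cite[Theorem~3.5]{Mat15} in the argument. Once this is in hand, the rest of the argument is parallel to Proposition~\ref{prop:aut}.
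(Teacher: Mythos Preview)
Your proof is correct and follows essentially the same approach the paper indicates: the paper simply states that Proposition~\ref{prop:Aut(F)=Aut(D)} ``is proved in the same way as Proposition~\ref{prop:aut} using \cite[Proposition~3.6]{Mat15}'', and you have spelled out precisely this argument, replacing the spatial realisation \cite[Theorem~3.5]{Mat15} for $\F(\cG)$ by its analogue \cite[Proposition~3.6]{Mat15} for $\D(\cG)$. One small remark: in the surjectivity step, the phrase ``determined by their induced homeomorphisms together with the constraint of lying in $\D(\cG)$'' is slightly awkward---since $\cG$ is assumed effective, the map $U\mapsto\pi_U$ is already injective on all of $\F(\cG)$, so $\pi_{\alpha_\F(U)}=\pi_{\Psi(U)}$ immediately yields $\alpha_\F(U)=\Psi(U)$ without the extra clause.
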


\begin{corollary}
\label{cor:ses}
 Assume that $\cG$ is purely infinite and that $\cG^{(0)}$ is a Cantor set. Then, there is a canonical short exact sequence
\[
1\to \F(\cG)/\D(\cG)\to\Out(\D(\cG))\to\Out(\F(\cG))\to 1.
\]
\end{corollary}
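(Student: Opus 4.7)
The plan is to leverage the restriction isomorphism $\res\colon \Aut(\F(\cG))\overset{\cong}{\to}\Aut(\D(\cG))$ from Proposition~\ref{prop:Aut(F)=Aut(D)} and then read off kernels after passing to outer automorphism groups. I will compose $\res$ with the canonical quotient $\Aut(\D(\cG))\to \Out(\D(\cG))$ to obtain a surjection $\pi\colon\Aut(\F(\cG))\to\Out(\D(\cG))$, and then compute $\ker(\pi)$.

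An element $\alpha\in\Aut(\F(\cG))$ lies in $\ker(\pi)$ precisely when $\alpha|_{\D(\cG)}=\Ad(d)|_{\D(\cG)}$ for some $d\in\D(\cG)$. Since $\Ad(d)\in \Aut(\F(\cG))$ also restricts to $\Ad(d)|_{\D(\cG)}$, applying the injectivity of $\res$ to $\alpha$ and $\Ad(d)$ forces $\alpha=\Ad(d)$ in $\Aut(\F(\cG))$. Hence $\ker(\pi)=\Ad(\D(\cG))$, which lies inside $\Inn(\F(\cG))$. Consequently $\Out(\D(\cG))\cong \Aut(\F(\cG))/\Ad(\D(\cG))$, and further quotienting by $\Inn(\F(\cG))/\Ad(\D(\cG))$ produces the canonical induced short exact sequence
\[
1\to \Inn(\F(\cG))/\Ad(\D(\cG))\to \Out(\D(\cG))\to\Out(\F(\cG))\to 1.
\]

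It remains to identify $\Inn(\F(\cG))/\Ad(\D(\cG))$ with $\F(\cG)/\D(\cG)$. For this I would consider the surjection $\F(\cG)\to \Inn(\F(\cG))/\Ad(\D(\cG))$ given by $f\mapsto [\Ad(f)]$; an elementary calculation shows its kernel equals $Z(\F(\cG))\cdot\D(\cG)$. Matui's simplicity theorem for purely infinite minimal groupoids gives that $\D(\cG)$ is simple and non-abelian, hence $Z(\D(\cG))=1$; moreover, injectivity of $\res$ on $\Inn(\F(\cG))$ yields the identification $Z(\F(\cG))=C_{\F(\cG)}(\D(\cG))$, so in particular $Z(\F(\cG))\cap\D(\cG)=1$. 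The desired identification therefore reduces to verifying that $Z(\F(\cG))=1$.

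The hard part will be establishing $Z(\F(\cG))=1$, and this is where pure infiniteness enters non-trivially. The cleanest route is to invoke C*-simplicity of $\F(\cG)$ in the purely infinite minimal setting (see \cite{LBMB18,BS19} and the discussion in the introduction): C*-simple groups have trivial amenable radical, hence trivial centre. Once $Z(\F(\cG))=1$ is in hand, the computations above assemble into the desired short exact sequence.
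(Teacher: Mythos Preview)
Your argument is correct and follows essentially the same route as the paper: both use the isomorphism $\res\colon\Aut(\F(\cG))\cong\Aut(\D(\cG))$, compose with a quotient map, and identify the resulting kernel using triviality of the centres. The only substantive difference is how you obtain $Z(\F(\cG))=1$: you invoke C*-simplicity of $\F(\cG)$ (via \cite{LBMB18,BS19}), whereas the paper cites \cite[Theorem~4.16]{Mat15} directly for the triviality of the centres of both $\F(\cG)$ and $\D(\cG)$. Matui's result is the more economical input here, since C*-simplicity is a much deeper fact that itself relies on his structural results; your detour through $Z(\F(\cG))=C_{\F(\cG)}(\D(\cG))$ and $Z(\F(\cG))\cap\D(\cG)=1$ is also unnecessary once you have $Z(\F(\cG))=1$ in hand.
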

\begin{proof}
The composition $\Aut(\D(\cG))\cong\Aut(\F(\cG))\to\Out(\F(\cG))$ has kernel equal to $\Gamma/\Inn(\D(\cG))$, where $\Gamma$ is the isomorphic copy of $\Inn(\F(\cG))$ in $\Aut(\D(\cG))$ under the restriction map. Since $\D(\cG)$ and $\F(\cG)$ have trivial centres by \cite[Theorem 4.16]{Mat15}, the result follows.
\end{proof}

We now turn to the proof of Corollary~\ref{cor}.
\begin{proof}[Proof of Corollary~\ref{cor}]
Since $\F(\cG_E)/\D(\cG_E)$ is amenable, and $\Out(\F(\cG_E))$ is C*-simple by Theorem~\ref{thm:main}, it follows from by \cite[Lemma 6.1]{Mat15} and Corollary~\ref{cor:ses} that the amenable radical of $\Out(\D(\cG_E))$ is isomorphic to $\F(\cG_E)/\D(\cG_E)$ and that $\Out(\D(\cG_E))$ is C*-simple if and only if $\F(\cG_E)/\D(\cG_E)$ vanishes.

By \cite[Corollary 6.24(1)]{Mat15}, we have $\F(\cG_E)/\D(\cG_E)\cong (\H_0(\cG_E)\otimes\Zz/2\Zz)\oplus\H_1(\cG_E)$. Finally, by \cite[Theorem 4.14]{Mat12}, we have $\H_0(\cG_E)\cong \Coker(I-A_E^t)$ and $\H_1(\cG_E)\cong \Ker(I-A_E^t)$.
\end{proof}

\end{document}